\documentclass[english, 10pt,reqno]{amsart}
\usepackage{geometry}  
\geometry{a4paper,left=3.5cm,right=3.5cm}
\usepackage{amssymb,amsmath,amsthm,amsfonts,color}
\usepackage{mathrsfs,dsfont, comment,mathscinet}
\usepackage{graphicx}
\usepackage{epstopdf}
\usepackage{mathtools}
\usepackage{babel}
\usepackage{enumerate,esint}

\usepackage{indentfirst}
\usepackage{bm}
\usepackage{picinpar}
\usepackage{caption}
\usepackage{subcaption}
\usepackage{soul}

\usepackage[colorlinks=true, pdfstartview=FitV, linkcolor=blue, citecolor=blue, urlcolor=blue]{hyperref}

\usepackage[toc,page]{appendix}

\usepackage{lineno}
\newcommand*\patchAmsMathEnvironmentForLineno[1]{%
  \expandafter\let\csname old#1\expandafter\endcsname\csname #1\endcsname
  \expandafter\let\csname oldend#1\expandafter\endcsname\csname end#1\endcsname
  \renewenvironment{#1}%
     {\linenomath\csname old#1\endcsname}%
     {\csname oldend#1\endcsname\endlinenomath}}%
\newcommand*\patchBothAmsMathEnvironmentsForLineno[1]{%
  \patchAmsMathEnvironmentForLineno{#1}%
  \patchAmsMathEnvironmentForLineno{#1*}}%
\AtBeginDocument{%
\patchBothAmsMathEnvironmentsForLineno{equation}%
\patchBothAmsMathEnvironmentsForLineno{align}%
\patchBothAmsMathEnvironmentsForLineno{flalign}%
\patchBothAmsMathEnvironmentsForLineno{alignat}%
\patchBothAmsMathEnvironmentsForLineno{gather}%
\patchBothAmsMathEnvironmentsForLineno{multline}%
}

\usepackage{algorithm2e}
\usepackage{etoolbox}
\usepackage{authblk}
\usepackage{amsaddr}

\usepackage{caption}
\usepackage{subcaption}


\allowdisplaybreaks 

\mathtoolsset{showonlyrefs} 

\makeatletter

\usepackage{fancyhdr}
 
\pagestyle{fancy}
\fancyhf{} 
\rhead{Section \thesubsection}
\lhead{Page \thepage}


\makeatletter
\patchcmd{\@maketitle}
  {\ifx\@empty\@dedicatory}
  {\ifx\@empty\@date \else {\vskip3ex \centering\footnotesize\@date\par\vskip1ex}\fi
   \ifx\@empty\@dedicatory}
  {}{}
\patchcmd{\@maketitle}
  {\ifx\@empty\@date\else \@footnotetext{\@setdate}\fi}
  {}{}{}
\makeatother

\usepackage{scalerel}
\usepackage{stackengine,wasysym}

\begingroup
\newtheorem{theorem}{Theorem}[section]

\newtheorem{define}[theorem]{Definition}

\endgroup

\theoremstyle{definition}
\begingroup

\newtheorem{corollary}[theorem]{Corollary}
\newtheorem{lemma}[theorem]{Lemma}
\newtheorem{remark}[theorem]{Remark}

\newtheorem{proposition}[theorem]{Proposition}

\endgroup


\newcommand\ba[1]{\begin{align}\label{#1}}
\newcommand\ea{\end{align}}
\newcommand\bas{\begin{align*}}
\newcommand\eas{\end{align*}}

\newcommand\ee{\end{equation}}
\newcommand\be{\begin{equation}}
\newcommand\ees{\end{equation*}}
\newcommand\bes{\begin{equation*}}

\definecolor{Pblue}{RGB}{87,158,208}



\newcommand{\e}{\varepsilon}

\newcommand{\om}{\omega}
 
\newcommand{\R}{{\mathbb R}}

\newcommand{\rn}{{{\R}^N}}

\newcommand{\wtos}{\mathrel{\mathop{\rightharpoonup}\limits^*}}
\newcommand{\wtof}{\mathrel{\mathop{\rightharpoonup}\limits^{\mathbb F}}}

\newcommand{\CC}{{\mathcal A}}
\newcommand{\N}{{\mathbb{N}}}

\newcommand{\T}{{\mathbb T}}

\newcommand\norm[1]{\left\|#1\right\|}

\newcommand{\abs}[1]{\left\lvert#1\right\rvert} 
\newcommand{\fsp}[1]{{\left(#1\right)}}
\newcommand{\fmp}[1]{\left[#1\right]}
\newcommand{\flp}[1]{\left\{#1\right\}}  
\newcommand{\fjp}[1]{\left<#1\right>}

\newcommand{\vp}{\varphi}

\newcommand{\limn}{\lim_{n\rightarrow\infty}}

\newcommand{\divg}{{\operatorname{div\,}}}

\newcommand{\seqn}[1]{\left\{#1\right\}_{n=1}^\infty}  
\newcommand{\seqk}[1]{\left\{#1\right\}_{k=1}^\infty}  
\newcommand{\seql}[1]{\left\{#1\right\}_{l=1}^\infty}

\definecolor{CMUred}{RGB}{153,0,0}
\definecolor{CMUgreen}{RGB}{0,135,81}
\definecolor{CMUblue}{RGB}{0,51,127}
\definecolor{Pblue}{RGB}{87,158,208}


\newcommand{\argmin}{{\operatorname{arg\,min}}}

\newcommand{\dist}{{\operatorname{dist}}}

\newcommand{\ta}{{\tilde\alpha}}

\newcommand{\Tl}{{\mathbb T_l}}

\def\argmin{\mathop{\rm arg\, min}}

\DeclareRobustCommand{\om}{\omega}

\numberwithin{equation}{section}

\newcommand{\normmm}[1]{{\left\vert\kern-0.25ex\left\vert\kern-0.25ex\left\vert #1 
    \right\vert\kern-0.25ex\right\vert\kern-0.25ex\right\vert}}
    

\addto\captionsenglish{}
    


\title{Learning optimal orders of the underlying Euclidean norm in total variation image denoising}
\date{March 28, 2019}
\author[Pan Liu]{Pan Liu}
 \address[Pan Liu]{Department of Applied Mathematics and Theoretical Physics,\\ University of Cambridge, Wilberforce Road, Cambridge CB3 0WA, UK}
 \email[P. Liu] {panliu.0923@maths.cam.ac.uk}

\author{Carola-Bibiane Sch\"onlieb}
 \address[Carola Schoenlieb]{Department of Applied Mathematics and Theoretical Physics,\\ University of Cambridge, Wilberforce Road, Cambridge CB3 0WA, UK}
 \email[Carola Schoenlieb] {cbs31@cam.ac.uk}
\subjclass[2010]{26B30, 94A08, 	47J20}
\keywords{total variation,  optimization and control, computer vision and pattern recognition}

\setlength{\parindent}{0cm}
\begin{document}



\maketitle
\begin{abstract}
A novel class of semi-norms, generalising the notion of the isotropic total variation $TV_{2}$ and the an-isotropic total variation $TV_{1}$ is introduced. A supervised learning method via bilevel optimisation is proposed for the computation of optimal parameters for this class of regularizers. Existence of solutions to the bilevel optimisation approach is proven. Moreover, a finite-dimensional approximation scheme for the bilevel optimisation approach is introduced that can numerically compute a global optimizer to any given accuracy. 
\end{abstract}
\tableofcontents
\thispagestyle{empty}

\setcounter{section}{0}
\setcounter{subsection}{0}
\setcounter{equation}{0}
\setcounter{theorem}{0}

\section{Introduction}\label{sec_introduction}
Total variation denoising is given by the minimization problem
\be\label{intro_denoisy2}
u_\alpha:=\argmin\flp{\norm{u-u_\eta}_{L^2(Q)}^2+\alpha TV(u):\,\, u\in BV(Q)},
\ee
where $u_\eta\in L^2(Q)$ denotes a given noisy image on the domain $Q:=(0,1)^N$, $N\in\N$, $\alpha\in\R^+$ denotes the \emph{regularization parameter}, and $TV(u)$ is the (isotropic) total variation defined by
\be\label{intro_tv_normal}
TV(u):=\sup\flp{\int_Q u\,\divg\vp\, dx:\,\,\vp\in C_c^\infty(Q;\,\rn),\,\,\abs{\vp}\leq 1}
\ee
which is $<+\infty$ for $u\in BV(Q)$, the space of functions of bounded variation. Here $\abs{\cdot}$ refers to the $\ell^2$-Euclidean norm, that is, for $x=(x_1,x_2)\in\R^2$ we have $\abs{x}=(x_1^2+x_2^2)^{1/2}$. The $TV$ denoising model \eqref{intro_denoisy2} is also called \emph{ROF} model, named after the pioneering paper \cite{rudin1992nonlinear} of \emph{Rudin}, \emph{Osher}, and \emph{Fatemi}. The $TV$ denoising model is known for its ability to both denoise an image and at the same time preserve discontinuities. Due to this edge-preserving property the \emph{ROF} model has established itself in the image processing literature.\\

Next to these desirable denoising properties the \emph{ROF} model, however, also comes with disadvantages. One of those is the tendency of the ROF model to generate unnecessary edges. These turn originally smoothly changing image intensities into piecewise constant intensity areas which create blocky-like artefacts also known as \emph{stair-casing}. Another disadvantage of the \emph{ROF} model is that it leads to a contrast loss near edges that mainly depends on their curvature. \\

\textbf{$\mathbf{TV_p}$, $\mathbf{1\leq p\leq\infty}$.} In this paper we consider a generalised notion of the total variation in which we replace the underlying $\ell^2$-Euclidean norm by the $\ell^p$-Euclidean norm for $1\leq p<\infty$. We therefore write $\abs{\cdot}_p$ for $\abs{\cdot}$ and $TV_p(\cdot)$ for $TV(\cdot)$. In \cite{MR2727338}, for instance, another variant of the total variation has been proposed, by switching the underlying Euclidean norm from $\ell^2$ to $\ell^1$ (\cite{MR1666943}), i.e.,
\be
TV_1(u):=\sup\flp{\int_Q u\,\divg\vp\, dx:\,\,\vp\in C_c^\infty(Q;\,\rn),\,\,\abs{\vp}_{1}^{\ast}\leq 1}<+\infty,
\ee
where $\abs{\cdot}_1$ denotes the Euclidean $1$-norm in the sense that for $x=(x_1,x_2)\in\R^2$, $\abs{x}_1=\abs{x_1}+\abs{x_2}$, and $\abs{\cdot}_1^\ast$ denotes the dual norm associated with $\abs{\cdot}_1$. 
Successful applications of $TV_1$, also called the anisotropic total variation, can be found in \cite{MR2727338, moll2005anisotropic, doi:10.1137/16M1103610, MR2496060, strong1996exact}. In particular, in \cite{MR2727338}, it has been observed that $TV_1$ has the ability to suppress the stair-casing effect which is a typical artifact induced by isotropic total variation. Total variations defined by Euclidean $\ell^p$ - norms, for $p\notin\flp{1,2}$, have rarely been analyzed, and hence their performance is largely unknown.\\
\textbf{Bilevel optimization.} The quality of a reconstructed image $u_\alpha$ obtained from \eqref{intro_denoisy2} highly depends on the choice of the regularization parameter $\alpha$. If $\alpha$ is too large then $TV(u)$ is penalized too much and the image is over-smoothed, resulting in a loss of information in the reconstructed image. On the other hand, if $\alpha$ is too small then the reconstructed image is under-regularized and noise is left in the reconstruction. Classical approaches to choose an appropriate regularisation parameter are Morozov's discrepancy principle \cite{Morozov:1966aa}, generalised cross-validation \cite{Golub:1979aa} or L-curve \cite{Hansen:1992aa} just to name a few \cite{Engl:2000aa}. A recent approach to determine the optimal $\alpha$ is bilevel optimization (see e.g.,\cite{MR1984880,tappen2007learning,domke2012generic,de2013image,kunisch2013bilevel,chen2013revisiting,chen2014insights}). Here, an optimality criterion for the denoised image is used, given in terms of a loss function for the minimiser $u_\alpha$, and an $\alpha$ found which minimises this loss. In most cases the loss function is supervised, that given a training set of noisy images $(u^i_\eta)_{i=1}^I$ and corresponding noise-free images $(u^i_c)_{i=1}^I$, bilevel optimization for the regularization parameter $\alpha$ in \eqref{intro_denoisy2} reads 
\begin{flalign}
\text{Level 1. }&\,\,\,\,\,\,\,\,\,\,\,\,\,\,\,\,\,\,\,\,\,\,\tilde \alpha\in\argmin\flp{\sum_{i=1}^I\norm{u^i_\alpha-u^i_c}_{L^2(Q)}^2:\,\, \alpha\in\R^+}, \textrm{ s.t. }\tag{{$\mathcal B$-L1}}\label{scheme_B1_BV}&\\
\text{Level 2. }&\,\,\,\,\,\,\,\,\,\,\,\,\,\,\,\,\,\,\,\,\,\,u^i_\alpha:=\argmin\flp{\norm{u-u^i_\eta}_{L^2(Q)}^2+\alpha {TV_2(u)}:\,\, u\in BV(Q)}, \quad i=1,\ldots, I.\tag{{$\mathcal B$-L2}}\label{scheme_B2_BV}&
\end{flalign}
The Level 1 problem in \eqref{scheme_B1_BV} looks for an $\alpha$ that minimizes the average $L^2$-distance between minimizers $u^i_\alpha$ of the Level 2 problem \eqref{scheme_B2_BV} and clean images $u^i_c$. It has been proven in \cite{reyes2015structure} that \eqref{scheme_B1_BV} admits at least one positive solution $\tilde\alpha\in\R^+$ provided that $TV_2(u^i_\eta)>TV_2(u^i_c)$, which is a reasonable assumption for image denoising. \\

For simplicity, in what follows we will omit the index $i$ from the training set and perform our analysis for a single pair $(u_\eta,u_c)$ of noisy and noise-free image, respectively. Everything we will discuss, however, will still hold for the case of multiple images in the training set.\\ 
%

\textbf{Bilevel optimisation for $\mathbf{TV_p}$.} For the purpose of studying $TV_p$ for $p\in [1,+\infty]$ we extend the bilevel training scheme $\mathcal B$ to scheme $\mathcal T$ as
\begin{flalign}
\text{Level 1. }&\,\,\,\,\,\,\,\,\,\,\,\,\,\,\,\,\,\,\,\,\,\,
(\alpha_\T,p_\T)\in\mathbb A[\T]:=\argmin\flp{\norm{u_{\alpha,p}-u_c}_{L^2(Q)}^2:\,\, (\alpha,p)\in\T},\tag{{$\mathcal T$-L1}}\label{scheme_B1_BV_p}&\\
\text{Level 2. }&\,\,\,\,\,\,\,\,\,\,\,\,\,\,\,\,\,\,\,\,\,\,
u_{\alpha,p}:=\argmin\flp{\norm{u-u_\eta}_{L^2(Q)}^2+\alpha {TV_p(u)}:\,\, u\in BV(Q)}\tag{{$\mathcal T$-L2}}\label{scheme_B2_BV_p},&
\end{flalign}
where, and in what follows, we call the set
\be\label{intro_training_ground}
\mathbb T:=\R^+\times[1,+\infty]
\ee
the \emph{training ground}, in which we optimize parameters $\alpha$ and $p$, and the set $\mathbb A[\T]$ the optimal set associated with $\T$, which contains the optimal parameters produced by the scheme $\mathcal T$. We point out that the new training scheme $\mathcal T$ simultaneously optimizes both the parameter $\alpha$ and the order $p$. \\\\
\textbf{Our contribution.} This paper contains two main results. The first result is contained in Theorem \ref{main_scheme_K_result} and proves that the scheme $\mathcal T$ \eqref{scheme_B1_BV_p}-\eqref{scheme_B2_BV_p} admits at least one optimal solution $(\alpha_\T,p_\T)\in\T$. This existence result is based on Theorem \ref{thm:new-Gamma} where we show that the functionals 
\be
 \mathcal I_{\alpha,p}(u):=\norm{u-u_\eta}_{L^2(Q)}^2+\alpha TV_{p}(u)\text{ for every }u\in BV(Q)
 \ee
are continuous, in the sense of $\Gamma$-convergence in the weak* topology of $BV(Q)$ (see \cite{braides2002gamma, MR1201152}), with respect to the parameters $(\alpha,p)$. We prove this by showing that the collection of new spaces, induced from $TV_p$ semi-norms, itself exhibits certain compactness and lower semicontinuity properties. \\\\
Our second contribution is a proposal for how to numerically determine the optimal solution of scheme $\mathcal T$, or equivalently compute global minimizers of the \emph{assessment function} $\mathcal A(\alpha,p)$: $\T\to\R^+$ defined as 
\be\label{cost_map_intro}
\mathcal A(\alpha,p):=\norm{u_{\alpha,p}-u_c}_{L^2(Q)}^2,
\ee
where $u_{\alpha,p}$ is obtained from \eqref{scheme_B2_BV_p}. We note that computing such global minimizers would be straightforward if $\mathcal A(\alpha,p)$ is quasi-convex in the sense of \cite{MR1819784}, or simply convex. In this case Newton's descent method or line search could be applied to compute a the global minimizer. However, as we shall later show in Figure \ref{fig:2nd_counter_exam} even for a fixed $p_0\in[1,+\infty]$ the assessment function $\mathcal A(\alpha,p_0)$ is not quasi-convex, and hence those methods mentioned above might get trapped in a local minimum. To overcome this difficulty, we introduce the concept of an \emph{acceptable optimal solution} that is a solution to $\mathcal T$ with a prescribed error. To be precise, we say the solution $(\bar \alpha,\bar p)$ is acceptable with error $\e>0$ if 
\be\label{accptable_train_result_intro}
\abs{\CC(\bar \alpha,\bar p)-\CC(\alpha_\T, p_\T)}<\e,
\ee
where $(\alpha_\T, p_\T)\in\mathbb A[\T]$ is a 
globally optimal solution obtained from the scheme $\mathcal T$.\\\\7
For computing such an acceptable optimal solution, we propose in Section \ref{thaos_sec} a finite  approximation method. We construct a sequence of finite sets $\T_l$ indexed by $l\in\N$, such that $\#\flp{\T_l}<+\infty$. For the precise definition of $\T_l$ we refer to Definition \ref{finite_playground_l}. We point out here that, since $\#\flp{\T_l}<+\infty$, the optimal solution(s)
\be
(\alpha_\Tl, p_\Tl)\in\mathbb A[\Tl]:=\argmin\flp{\CC(\alpha,p):\,\, (\alpha,p)\in\T_l}
\ee
can be determined precisely by evaluating $\CC(\alpha,p)$ at each $(\alpha,p)\in  \T_l$. From there, it is not hard to prove that $(\alpha_\Tl, p_\Tl)\to( \alpha_\T, p_\T)$ and $\mathcal A(\alpha_\Tl, p_\Tl)\to \mathcal A(\alpha_\T, p_\T)$, as $l\to\infty$, by using standard $\Gamma$-convergence techniques. This is, however, still not enough to allow the computation of an acceptable solution as in \eqref{accptable_train_result_intro}. To achieve such a result, we prove in Theorem \ref{article_result} an estimate for a fixed index $l\in\N$, which gives an estimate of the form
\be\label{error_est_intro}
\abs{\CC(\alpha_\Tl, p_\Tl)-\CC( \alpha_\T, p_\T)}\leq  \sqrt{\alpha_U}\fmp{\fsp{1/l}^{1/2}+2{\fsp{1-N^{-1/\sqrt l}}}^{1/2}}(TV_1(u_\eta))^{1/2},
\ee
in which $\alpha_U$ can be determined numerically (see Proposition \ref{prop:last_word_go_upper}). 
Therefore, by using estimate \eqref{error_est_intro}, we can acquire the desired index $l\in \N$ so that the associated optimal solution $(\alpha_\Tl, p_\Tl)\in\T_l$ is an acceptable optimal solution for the error $\e>0$.\\\\
We note that the estimate \eqref{error_est_intro} requires that $u_\eta\in BV(Q)$, which usually does not hold for a noisy image $u_\eta$. To overcome this, in Section \ref{relaxation_corrupted_sec} we show that, for any given $\e>0$, even if $u_\eta\in L^2(Q)\setminus BV(Q)$, we are still able to find $l\in \N$ such that 
\be
\abs{\CC(\alpha_\Tl, p_\Tl)-\CC( \alpha_\T, p_\T)}\leq \e,
\ee
i.e., the associated optimal solution $(\alpha_\Tl, p_\Tl)\in\T_l$ is an acceptable optimal solution for the error $\e>0$. We do so by introducing a piece-wise constant approximation of the corrupted image $u_\eta\in L^2(Q)\setminus BV(Q)$, and we refer readers to Theorem \ref{article_result_finite_resolution} and Corollary \ref{main_combine_coro} for details.\\\\
\textbf{Organisation of the paper.} The paper is organized as follows. In Section \ref{sec_speed_estimation_pre} we collect some notations and preliminary results. The $\Gamma$-convergence and the bilevel training scheme are the subjects of Sections \ref{GOTLITF} and \ref{sec_existence_training}, respectively. Section \ref{thaos_sec} is devoted to the analysis of the finite approximation training scheme and the proof of Theorem \ref{article_result}. Finally, in Section \ref{sec_NSC} some numerical simulations and insights. 

\section{The $\ell^p$-anisotropic total variation, $\Gamma$-convergence, and an optimal training scheme}
We recall that, throughout this article, $u_\eta\in L^2(Q)$ denotes a given datum representing a noisy image, $u_c\in BV(Q)$ represents the corresponding noise-free image, and $u_{\alpha,p}\in BV(Q)$ is the reconstructed image obtained from \eqref{scheme_B2_BV_p} for a given set $(\alpha,p)\in\T$.
\subsection{The $\ell^p$-(an)-isotropic total variation}\label{sec_speed_estimation_pre}
We recall from \cite{evans2015measure} that a function $u\in L^1(Q)$ has \emph{bounded variation} in $Q$ if 
\be
TV_2(u):=\sup\flp{\int_Q u\,\divg\vp\, dx:\,\,\vp\in C_c^\infty(Q;\,\rn),\,\,\abs{\vp}_2\leq 1}<+\infty,
\ee
and write $BV(Q)$ to denote the \emph{space of functions of bounded variation}. We also define the norm
\be
\norm{u}_{BV(Q)}:=\norm{u}_{L^1(Q)}+TV_2(u).
\ee
We next define the Euclidean $\ell^p$-norm for $p\in[1,+\infty]$ and for $x=(x_1,x_2,\ldots,x_N)\in \rn $ as  
\be\label{p_euclidean_def}
\abs{x}_p:=(\abs{x_1}^p+\abs{x_2}^p+\cdots+\abs{x_N}^p)^{1/p}.
\ee
We recall that $\abs{\cdot}_p$ for $p\in[1,+\infty]$ are equivalent norms on $\rn$. To be precise, for any $1\leq p_1<p_2\leq \infty$ and $x\in\rn$, we have that
\be\label{equivalent_p_norm}
\abs{x}_{p_2}\leq\abs{x}_{p_1}\leq N^{1/p_1-1/p_2}\abs{x}_{p_2}.
\ee
\begin{define}[The $\ell^p$-an-isotropic total variation]
Let $u\in L^1(Q)$ be given, we define, for $1\leq p\leq +\infty$, the $\ell^p$ an-isotropic total variation $TV_p$ by
\be\label{est_vpiso_de_p}
TV_p(u):=\sup\flp{\int_Q u\,\divg\vp\, dx:\,\,\vp\in C_c^\infty(Q;\,\rn),\,\,\abs{\vp}_{p}^{\ast}\leq 1},
\ee
where $\abs{\cdot}_p^\ast$ denotes the dual norm associated with $\abs{\cdot}_p$. 
\end{define}
\begin{remark}\label{equivalent_semi}
In view of \eqref{equivalent_p_norm}, we have that the $TV_p$ semi-norms, for $1\leq p\leq\infty$, are equivalent. That is, for $1\leq p_1<p_2\leq +\infty$, we have that 
\be\label{equivalent_semi_TV_p}
TV_{p_2}(u)\leq TV_{p_1}(u)\leq  N^{1/p_2^\ast-1/p_1^\ast}TV_{p_2}(u),
\ee
for all $u\in BV(Q)$. In particular, we have 
\be\label{equivalent_semi_TA_all}
TV(u)= TV_2(u)\leq N\cdot TV_p(u)
\ee
for any $p\in[1,+\infty]$ and $u\in BV(Q)$.
\end{remark}
\subsection{$\Gamma$-convergence of functionals defined by $TV_p$ seminorms}\label{GOTLITF}
Let $p\in[1,+\infty]$ and $\alpha\in\R^+$. We define the functional $\mathcal I_{\alpha,p}$: $L^2(Q)\to [0,+\infty]$ as
\be
\mathcal I_{\alpha,p}(u):=
\begin{cases}
\norm{u-u_\eta}_{L^2(Q)}^2+\alpha TV_p(u),&\text{ if }u\in BV(Q)\\
+\infty, &\text{ otherwise }.
\end{cases}
\ee
The following theorem is the main result of Section \ref{GOTLITF}.
\begin{theorem}\label{thm:new-Gamma}
Let $\seqn{p_n}\subset [1,+\infty]$ and $\seqn{\alpha_n}\subset \R^+$ be given such that $p_n\to p_0$ and $\alpha_n\to \alpha_0\in\R^+$. Then the functional $\mathcal I_{\alpha_n,p_n}$ $\Gamma$-converges to $\mathcal I_{\alpha_0,p_0}$ in the weak* topology of $BV(Q)$. Namely, for every $u\in BV(Q)$ the following two assertions hold:
\begin{enumerate}
\item[{\rm (LI)}] If 
\be
 u_n\wtos u_0\text{ weakly* in }BV(Q),\ee
then 
\be
\liminf_{n\to +\infty}\mathcal I_{\alpha_n,p_n}(u_n)\geq \mathcal I_{\alpha_0,p_0}(u_0).
\ee
\item[{\rm (RS)}]
For each $u_0\in BV(Q)$, there exists $\seqn{u_n}\subset BV(Q)$ such that 
\be u_n\wtos u_0\text{ weakly}^\ast\text{ in }BV(Q),\ee
and 
\be
\limsup_{n\to +\infty}\,\mathcal I_{\alpha_n,p_n}(u_n)\leq \mathcal I_{\alpha_0,p_0}(u_0).
\ee
\end{enumerate}
\end{theorem}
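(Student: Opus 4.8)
The plan is to establish the liminf inequality (LI) and the recovery-sequence condition (RS) separately, the common tool being the equivalence \eqref{equivalent_semi_TV_p}. That estimate first yields a continuity property I will use repeatedly: for each fixed $u\in BV(Q)$ the map $p\mapsto TV_p(u)$ is continuous on $[1,+\infty]$. Indeed, applying \eqref{equivalent_semi_TV_p} with $\flp{p_1,p_2}$ taken as the ordered pair $\flp{p_n,p_0}$ squeezes $TV_{p_n}(u)$ between $TV_{p_0}(u)$ and $N^{\abs{1/p_n^\ast-1/p_0^\ast}}TV_{p_0}(u)$, where $p^\ast$ denotes the conjugate exponent; since $p_n\to p_0$ forces $p_n^\ast\to p_0^\ast$ and hence $N^{\abs{1/p_n^\ast-1/p_0^\ast}}\to 1$, we obtain $TV_{p_n}(u)\to TV_{p_0}(u)$, and together with $\alpha_n\to\alpha_0$ also $\alpha_n TV_{p_n}(u)\to\alpha_0 TV_{p_0}(u)$.

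Given this, the (RS) condition is almost immediate: for $u_0\in BV(Q)$ I would choose the constant recovery sequence $u_n:=u_0$, which trivially satisfies $u_n\wtos u_0$. The fidelity term $\norm{u_0-u_\eta}_{L^2(Q)}^2$ is independent of $n$, while the continuity above gives $\alpha_n TV_{p_n}(u_0)\to\alpha_0 TV_{p_0}(u_0)$; hence $\mathcal I_{\alpha_n,p_n}(u_0)\to\mathcal I_{\alpha_0,p_0}(u_0)$ when $u_0\in L^2(Q)$, and the required inequality is vacuous when $u_0\notin L^2(Q)$ because both sides equal $+\infty$.

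For (LI) I assume $u_n\wtos u_0$ in $BV(Q)$, so that $\sup_n\norm{u_n}_{BV(Q)}<+\infty$ and $u_n\to u_0$ in $L^1(Q)$, and I bound the fidelity and regularization terms separately, combining them through the superadditivity of $\liminf$. For the fidelity term I claim $\liminf_n\norm{u_n-u_\eta}_{L^2(Q)}^2\geq\norm{u_0-u_\eta}_{L^2(Q)}^2$: passing to a subsequence realizing the $\liminf$ (there is nothing to prove if it is $+\infty$), the $L^2$-bound produces a further subsequence converging weakly in $L^2(Q)$, whose weak limit is identified with $u_0$ via the $L^1$-convergence, and weak lower semicontinuity of the convex map $v\mapsto\norm{v-u_\eta}_{L^2(Q)}^2$ finishes the estimate; note this argument uses no Sobolev embedding and so is valid for every $N$. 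For the regularization term I fix an admissible competitor $\vp\in C_c^\infty(Q;\rn)$ for $TV_{p_0}(u_0)$, i.e.\ $\abs{\vp(x)}_{p_0}^\ast\leq 1$ for all $x$, and rescale it to $\vp_n:=\vp/\max\flp{1,\mu_n}$ with $\mu_n:=\sup_{x\in Q}\abs{\vp(x)}_{p_n}^\ast$, so that $\abs{\vp_n(x)}_{p_n}^\ast\leq 1$ and $\vp_n$ is admissible for $TV_{p_n}$. By \eqref{equivalent_p_norm} applied to the conjugate exponents, $\mu_n\to\sup_{x\in Q}\abs{\vp(x)}_{p_0}^\ast\leq 1$, so the scalar $\max\flp{1,\mu_n}\to 1$ and, using $u_n\to u_0$ in $L^1(Q)$ with $\divg\vp\in L^\infty(Q)$, $\int_Q u_n\,\divg\vp_n\to\int_Q u_0\,\divg\vp$. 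Therefore $\liminf_n\alpha_n TV_{p_n}(u_n)\geq\liminf_n\alpha_n\int_Q u_n\,\divg\vp_n=\alpha_0\int_Q u_0\,\divg\vp$, and taking the supremum over all admissible $\vp$ gives $\liminf_n\alpha_n TV_{p_n}(u_n)\geq\alpha_0 TV_{p_0}(u_0)$. Adding the two lower bounds yields $\liminf_n\mathcal I_{\alpha_n,p_n}(u_n)\geq\mathcal I_{\alpha_0,p_0}(u_0)$.

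I expect the genuine difficulty to lie in the regularization term of (LI), where one must pass to the limit jointly in the order $p_n$ and in the function $u_n$: the competitors for the limiting seminorm $TV_{p_0}$ are not admissible for $TV_{p_n}$, and one needs replacements that are admissible for every $n$ yet converge strongly enough (here, as fixed scalar multiples of $\vp$ with scalars tending to $1$) to survive pairing with the merely $L^1$-convergent $u_n$. The rescaling device, powered by the uniform comparison of the $\ell^p$-norms in \eqref{equivalent_p_norm}, is exactly what makes this step work, while the same estimate in its $TV_p$-form \eqref{equivalent_semi_TV_p} trivializes the (RS) direction.
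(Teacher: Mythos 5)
Your proof is correct, and its overall skeleton matches the paper's: both treat (RS) with the constant recovery sequence $u_n=u_0$, and both reduce the continuity in $p$ to the norm equivalence \eqref{equivalent_semi_TV_p} (the paper isolates exactly your squeeze argument as Proposition \ref{new_equal}). The genuine difference is in how the $\liminf$ inequality for the regularization term is obtained. The paper (Proposition \ref{compact_semi_para}) stays at the level of the seminorms: it bounds $TV_{p_n}(u_n)$ from below by a factor of the form $N^{-\abs{1/p_n-1/p_0}}$ times $TV_{p_0}(u_n)$ via \eqref{equivalent_semi_TV_p}, and then invokes the standard weak*/$L^1$ lower semicontinuity of the \emph{fixed} seminorm $TV_{p_0}$, so the moving exponent is absorbed into a multiplicative constant tending to $1$. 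You instead work directly with the dual formulation, rescaling a fixed competitor $\vp$ for $TV_{p_0}(u_0)$ by $\max\flp{1,\mu_n}^{-1}$ so that it is admissible for every $TV_{p_n}$ and pairing it with the $L^1$-convergent $u_n$; this amounts to re-proving the lower semicontinuity from scratch in a form adapted to the varying exponent, so your argument is more self-contained (it never black-boxes the lsc of $TV_p$), at the cost of being slightly longer. Both mechanisms are powered by the same equivalence \eqref{equivalent_p_norm}, applied to test fields in your case and to seminorms in the paper's. One further point in your favor: you prove lower semicontinuity of the fidelity term explicitly (weak $L^2$ compactness plus identification of the weak limit through the $L^1$ convergence), a step the paper leaves implicit, since Proposition \ref{compact_semi_para} as stated only controls the $TV$ part of the energy.
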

We subdivide the proof of Theorem \ref{thm:new-Gamma} into two propositions.
\begin{proposition}[$\Gamma$-$\liminf$ inequality]\label{compact_semi_para}
Let $\seqn{p_n}\subset [1,+\infty]$ and $\seqn{\alpha_n}\subset \R^+$ be such that $p_n\to p$ and $\alpha_n\to \alpha\in\R^+$.
Let $\seqn{u_n}\subset BV(Q)$ be such that
\be\label{BGV_up_bdd}
\sup\flp{ \mathcal I_{\alpha_n,p_n}(u_n):\,\,n\in\mathbb N}<+\infty.
\ee
Then, there exists $u\in BV(Q)$ such that, up to the extraction of a (non-relabeled) subsequence, there holds
\be
u_n\wtos u \text{ in }BV(Q),
\ee
with
\be
\label{star}
\liminf_{n\to +\infty} TV_{p_n}(u_n)\geq TV_p(u).
\ee
\end{proposition}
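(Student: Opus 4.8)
The plan is to read the energy bound \eqref{BGV_up_bdd} as two separate pieces of information: an $L^2$-bound which, together with the $BV$-seminorm bound, yields compactness, and the dual definition \eqref{est_vpiso_de_p} of $TV_{p_n}$, which yields the lower-semicontinuity inequality \eqref{star}. I would treat these two steps in turn.

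For compactness, first I would note that \eqref{BGV_up_bdd} controls both summands of $\mathcal I_{\alpha_n,p_n}(u_n)$. The $L^2$-term gives $\sup_n\norm{u_n-u_\eta}_{L^2(Q)}<+\infty$, hence $\sup_n\norm{u_n}_{L^1(Q)}<+\infty$ since $Q$ has finite measure and $u_\eta\in L^2(Q)$. As $\alpha_n\to\alpha\in\R^+$ we have $\alpha_n\geq\alpha/2>0$ for $n$ large, so the seminorm term forces $\sup_n TV_{p_n}(u_n)<+\infty$; combined with the uniform equivalence $TV_2(u_n)\leq N\cdot TV_{p_n}(u_n)$ from \eqref{equivalent_semi_TA_all}, this gives $\sup_n TV_2(u_n)<+\infty$. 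Thus $\seqn{u_n}$ is bounded in $BV(Q)$, and the standard $BV$-compactness theorem produces a (non-relabeled) subsequence with $u_n\wtos u$ in $BV(Q)$; in particular $u_n\to u$ in $L^1(Q)$.

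For \eqref{star} I would argue directly from \eqref{est_vpiso_de_p}, recalling that $\abs{\cdot}_p^\ast=\abs{\cdot}_{p^\ast}$ with $p^\ast$ the H\"older conjugate of $p$. Fix a competitor $\vp\in C_c^\infty(Q;\rn)$ admissible for $TV_p$, i.e. $\abs{\vp(x)}_{p^\ast}\leq1$ for every $x\in Q$; it then suffices to prove $\int_Q u\,\divg\vp\,dx\leq\liminf_n TV_{p_n}(u_n)$ and take the supremum over such $\vp$. The difficulty, and the main obstacle of the whole proof, is that $\vp$ need not be admissible for the \emph{moving} seminorm $TV_{p_n}$. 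To repair this I would renormalise: setting $c_n:=\sup_{x\in Q}\abs{\vp(x)}_{p_n^\ast}$ and $\vp_n:=\vp/c_n$ (the case $\vp\equiv0$ being trivial, so $c_n>0$ eventually), the rescaled field satisfies $\abs{\vp_n(x)}_{p_n^\ast}\leq1$ and is admissible for $TV_{p_n}$. The crucial point is that $c_n\to c:=\sup_{x}\abs{\vp(x)}_{p^\ast}\leq1$: indeed $\vp$ takes values in a fixed compact subset of $\rn$, $p_n^\ast\to p^\ast$ in $[1,+\infty]$, and the map $q\mapsto\abs{y}_q$ is continuous on $[1,+\infty]$ (endpoint $q=+\infty$ included) uniformly for $y$ in compact sets, so $\abs{\vp(\cdot)}_{p_n^\ast}\to\abs{\vp(\cdot)}_{p^\ast}$ uniformly and the suprema converge.

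With this renormalisation the problem reduces to standard lower semicontinuity of a fixed linear functional under $L^1$-convergence. From admissibility of $\vp_n$ and the definition of $TV_{p_n}$,
\[
\int_Q u_n\,\divg\vp\,dx=c_n\int_Q u_n\,\divg\vp_n\,dx\leq c_n\,TV_{p_n}(u_n).
\]
Since $\divg\vp\in C_c^\infty(Q)$ is fixed and $u_n\to u$ in $L^1(Q)$, the left-hand side converges to $\int_Q u\,\divg\vp\,dx$; as $c_n\to c\leq1$, for every $\epsilon>0$ we have $c_n\leq1+\epsilon$ for $n$ large, and using $TV_{p_n}(u_n)\geq0$ we obtain $\int_Q u\,\divg\vp\,dx\leq(1+\epsilon)\liminf_n TV_{p_n}(u_n)$. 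Letting $\epsilon\to0$ and then passing to the supremum over all admissible $\vp$ yields \eqref{star}. Every step beyond the renormalisation is routine; I expect the only delicate point to be the continuity of $q\mapsto\abs{y}_q$ up to the endpoint $q=+\infty$, which is exactly what makes $c_n\to c\leq1$ and lets the moving constraint be absorbed into a vanishing correction.
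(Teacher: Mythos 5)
Your proof is correct, and while the compactness half coincides with the paper's (energy bound plus the equivalence \eqref{equivalent_semi_TA_all} gives a uniform $BV$ bound, then standard $BV$ compactness), your lower-semicontinuity half takes a genuinely different route. The paper never touches test fields: it uses the equivalence \eqref{equivalent_semi_TV_p} to write $TV_{p_n}(u_n)\geq N^{-\abs{1/p_n-1/p}}\,TV_p(u_n)$, invokes the classical lower semicontinuity of the \emph{fixed} seminorm $TV_p$ along $u_n\to u$ in $L^1(Q)$, and lets the constant $N^{-\abs{1/p_n-1/p}}\to 1$. You instead re-prove lower semicontinuity from scratch out of the dual definition \eqref{est_vpiso_de_p}, renormalising each fixed competitor $\vp$ by $c_n:=\sup_x\abs{\vp(x)}_{p_n^\ast}\to c\leq 1$; your $c_n$ plays exactly the role of the paper's dimensional factor, of which it is a per-test-field refinement ($N^{-\abs{1/p_n-1/p}}$ being the worst case). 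The paper's argument is shorter given Remark \ref{equivalent_semi}, but it leans on the known lower semicontinuity of $TV_p$ and on the explicit $\ell^p$ equivalence constants; yours is self-contained and uses only uniform convergence of the dual norms on compact sets, i.e.\ convergence of the dual unit balls, so it transfers verbatim to the Finsler-metric extension of Section 2.4 (Definition \ref{MR2067663}, Theorem \ref{main_scheme_K_result2}), where no explicit equivalence constant is available — which is precisely the substitution the paper only sketches there. Two cosmetic remarks: your handling of general $\alpha_n\to\alpha>0$ is actually more complete than the paper's, which reduces to $\alpha_n\equiv 1$; and in the final limit passage one should note that if $\liminf_n TV_{p_n}(u_n)=+\infty$ then \eqref{star} is trivial, so the $\e$-absorption argument is only needed in the finite case, as you implicitly use.
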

\begin{proof}
We prove the statement for $\alpha_n\equiv1$ only, as the general case for $\alpha\in\R^+$ can be argued with straightforward adaptations. \\\\
By \eqref{equivalent_semi_TA_all} we always have
\be
TV(u_n)\leq N\cdot TV_{p_n}(u_n) \leq N\cdot \mathcal I_{1,p_n}(u_n).
\ee
Thus, by \eqref{BGV_up_bdd} we have
\be
\sup\flp{\norm{u_n}_{BV(Q)}:\,\,n\in\mathbb N}<+\infty,
\ee
which implies that there exists $u\in BV(Q)$ such that, up to extract a subsequence (not relabeled),
\be\label{target_conv_gamma}
u_n\wtos u\text{ in }BV(Q)\text{ and }u_n\to u \text{ in }L^1 \text{ and }a.e..
\ee
Therefore, we conclude that 
\be
\liminf_{n\to\infty}TV_{p_n}(u_n)\geq \liminf_{n\to\infty} N^{-\abs{1/p_n-1/p}} TV_p(u_n)\geq TV_p(u),
\ee
where in the first inequality we used \eqref{equivalent_semi_TV_p}. This concludes the proof of \eqref{star} and hence the proposition.
\end{proof}
\begin{lemma}\label{b_differential_aa}
Let $p\in[1,+\infty]$ be fixed. Then the function
\be
f(\alpha):=TV_p(u_{\alpha,p}),\,\,\alpha\in\R^+
\ee
is continuous and monotonically decreasing and the function
\be
g(\alpha):=\norm{u_{\alpha,p}-{u_\eta}}_{L^2(Q)},\,\,\alpha\in\R^+
\ee
is continuous and non-decreasing.
\end{lemma}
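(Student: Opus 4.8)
The plan is to first record that, for each fixed $(\alpha,p)$, the minimiser $u_{\alpha,p}$ is unique, so that $f$ and $g$ are genuinely single-valued: the data fidelity $u\mapsto\norm{u-u_\eta}_{L^2(Q)}^2$ is strictly convex and $TV_p$ is convex, hence $\mathcal{I}_{\alpha,p}$ is strictly convex and admits at most one minimiser, existence being guaranteed by the direct method together with the coercivity and lower semicontinuity furnished by Proposition \ref{compact_semi_para}. With this in hand I treat monotonicity and continuity separately, monotonicity being the easy part and continuity the substantial one.

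For monotonicity I use the classical comparison argument. Fix $\alpha_1<\alpha_2$ and abbreviate $u_j:=u_{\alpha_j,p}$. Testing the minimality of $u_1$ against the competitor $u_2$, and vice versa, gives
\begin{align}
\norm{u_1-u_\eta}_{L^2(Q)}^2+\alpha_1 TV_p(u_1)&\leq \norm{u_2-u_\eta}_{L^2(Q)}^2+\alpha_1 TV_p(u_2),\\
\norm{u_2-u_\eta}_{L^2(Q)}^2+\alpha_2 TV_p(u_2)&\leq \norm{u_1-u_\eta}_{L^2(Q)}^2+\alpha_2 TV_p(u_1).
\end{align}
Adding these yields $(\alpha_1-\alpha_2)\fsp{TV_p(u_1)-TV_p(u_2)}\leq 0$, and since $\alpha_1<\alpha_2$ this forces $TV_p(u_1)\geq TV_p(u_2)$, i.e. $f$ is non-increasing. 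Substituting $TV_p(u_2)\leq TV_p(u_1)$ back into the first inequality gives $\norm{u_1-u_\eta}_{L^2(Q)}^2\leq\norm{u_2-u_\eta}_{L^2(Q)}^2$, i.e. $g$ is non-decreasing.

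For continuity I fix $\alpha_0\in\R^+$ and a sequence $\alpha_n\to\alpha_0$ and set $u_n:=u_{\alpha_n,p}$. Comparing with the admissible competitor $u_\eta$ bounds $\mathcal{I}_{\alpha_n,p}(u_n)$ uniformly, so Proposition \ref{compact_semi_para} (read with $p_n\equiv p$) supplies $u^*\in BV(Q)$ with $u_n\wtos u^*$ along a subsequence and $\liminf_n TV_p(u_n)\geq TV_p(u^*)$; the same energy bound gives $\norm{u_n}_{L^2(Q)}\leq 2\norm{u_\eta}_{L^2(Q)}$, so $u_n\wto u^*$ weakly in $L^2(Q)$ as well. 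Since $p$ is held fixed I can use the constant recovery sequence $u_{\alpha_0,p}$: from minimality and weak lower semicontinuity,
\begin{align}
\mathcal{I}_{\alpha_0,p}(u^*)\leq \liminf_{n}\mathcal{I}_{\alpha_n,p}(u_n)\leq \liminf_n \mathcal{I}_{\alpha_n,p}(u_{\alpha_0,p})=\mathcal{I}_{\alpha_0,p}(u_{\alpha_0,p}),
\end{align}
so $u^*$ minimises $\mathcal{I}_{\alpha_0,p}$ and hence $u^*=u_{\alpha_0,p}$ by uniqueness; as the limit is independent of the subsequence, the whole sequence converges, and the chain above, now forced to equalities, yields $\mathcal{I}_{\alpha_n,p}(u_n)\to\mathcal{I}_{\alpha_0,p}(u_{\alpha_0,p})$.

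The step I expect to be the crux is passing from convergence of the \emph{total} energies $\mathcal{I}_{\alpha_n,p}(u_n)\to\mathcal{I}_{\alpha_0,p}(u_{\alpha_0,p})$ to the \emph{separate} convergences $g(\alpha_n)\to g(\alpha_0)$ and $f(\alpha_n)\to f(\alpha_0)$, since a priori energy could leak between the fidelity and regularisation terms. I would close this by a subsequence argument: weak lower semicontinuity gives $\liminf_n g(\alpha_n)^2\geq g(\alpha_0)^2$ and $\liminf_n f(\alpha_n)\geq f(\alpha_0)$, while $g(\alpha_n)$ and $f(\alpha_n)$ stay bounded, so along a subsequence $g(\alpha_n)^2\to L_g\geq g(\alpha_0)^2$ and $f(\alpha_n)\to L_f\geq f(\alpha_0)$; since $\alpha_n\to\alpha_0$, the identity $L_g+\alpha_0 L_f=\mathcal{I}_{\alpha_0,p}(u_{\alpha_0,p})=g(\alpha_0)^2+\alpha_0 f(\alpha_0)$ then forces $L_g=g(\alpha_0)^2$ and $L_f=f(\alpha_0)$. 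Because every subsequence admits a further subsequence converging to these same values, the full sequences satisfy $f(\alpha_n)\to f(\alpha_0)$ and $g(\alpha_n)\to g(\alpha_0)$, establishing continuity of both $f$ and $g$.
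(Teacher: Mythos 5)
Your proof is correct and follows essentially the same route as the paper: the monotonicity half (the two comparison inequalities, summed, give that $f$ is non-increasing; back-substitution gives that $g$ is non-decreasing) is verbatim the paper's argument.

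For continuity the paper is extremely terse — it simply asserts that $f$ and $g$ are continuous ``by Proposition \ref{compact_semi_para}'' — and your argument is exactly the expansion that citation stands in for: uniform energy bound, compactness and lower semicontinuity from Proposition \ref{compact_semi_para} with $p_n\equiv p$, identification of the weak* limit with $u_{\alpha_0,p}$ via minimality and uniqueness, convergence of the total energies, and then the splitting step (separate lower semicontinuity of the two terms plus the energy identity) to rule out leakage between fidelity and regulariser. You are right that this splitting is the crux; the paper itself performs precisely this manoeuvre later, in Proposition \ref{approx_bilevel}, to prove $TV_{p_K}(u^K_{\alpha_K,p_K})\to TV_p(u_{\alpha,p})$, so your write-up is a legitimate (and more complete) version of the paper's proof rather than a genuinely different one. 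One small slip: you bound the energies ``by comparing with the admissible competitor $u_\eta$'', but in the paper's setting $u_\eta\in L^2(Q)$ need not belong to $BV(Q)$, so $\mathcal{I}_{\alpha_n,p}(u_\eta)$ may equal $+\infty$ and that comparison is vacuous. Compare instead with $u\equiv 0$ (or any fixed $BV$ competitor): this gives $\mathcal{I}_{\alpha_n,p}(u_{\alpha_n,p})\leq\|u_\eta\|_{L^2(Q)}^2$, which supplies both the uniform energy bound and the $L^2$ bound $\|u_{\alpha_n,p}\|_{L^2(Q)}\leq 2\|u_\eta\|_{L^2(Q)}$ that you use.
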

\begin{proof}
We notice that by Proposition \ref{compact_semi_para}, we have $f(\alpha)$ and $g(\alpha)$ are continuous. Next, for any $0\leq\alpha_1<\alpha_2<+\infty$, by minimality there holds
\be\label{CTD_l1}
\norm{u_{\alpha_1,p}-u_\eta}_{L^2(Q)}^2+\alpha_1 TV_p\fsp{u_{\alpha_1,p}}\leq \norm{u_{\alpha_2,p}-u_\eta}_{L^2(Q)}^2+\alpha_1 TV_p\fsp{u_{\alpha_2,p}}
\ee
and 
\be\label{CTD_l2}
\norm{u_{\alpha_2,p}-u_\eta}_{L^2(Q)}^2+\alpha_2 TV_p\fsp{u_{\alpha_2,p}}\leq \norm{u_{\alpha_1,p}-u_\eta}_{L^2(Q)}^2+\alpha_2 TV_p\fsp{u_{\alpha_1,p}}.
\ee
Adding up the previous two inequalities yields
\be
\fsp{\alpha_2-\alpha_1} TV_p\fsp{u_{\alpha_2,p}}\leq \fsp{\alpha_2-\alpha_1} TV_p\fsp{u_{\alpha_1,p}},
\ee
which implies that 
\be\label{CTD_l3}
f(\alpha_2)= TV_p\fsp{u_{\alpha_2,p}}\leq  TV_p\fsp{u_{\alpha_1,p}}=f(\alpha_1).
\ee
Moreover, in view of \eqref{CTD_l1} and \eqref{CTD_l3}, we obtain that 
\begin{align}
&\norm{u_{\alpha_1,p}-u_\eta}_{L^2(Q)}^2+\alpha_1 TV_p\fsp{u_{\alpha_1,p}} \\
&\leq\norm{u_{\alpha_2,p}-u_\eta}_{L^2(Q)}^2+\alpha_1 TV_p\fsp{u_{\alpha_2,p}}\\
&\leq \norm{u_{\alpha_2,p}-u_\eta}_{L^2(Q)}^2+\alpha_1 TV_p\fsp{u_{\alpha_1,p}},
\end{align}
which, in turn, yields $\norm{u_{\alpha,p}-u_\eta}_{L^2(Q)}^2$ is non-decreasing and we are done.
\end{proof}
\begin{proposition}\label{new_equal}
Let $\seqn{p_n}\subset [1,+\infty]$ and $\seqn{\alpha_n}\subset \R^+$ be such that $p_n\to p$ and $\alpha_n\to\alpha\in\R^+$. Then for every $u\in BV(Q)$ there holds
\be
\limsup_{n\to\infty}\alpha_nTV_{p_n}(u)= \alpha TV_p(u).
\ee
\end{proposition}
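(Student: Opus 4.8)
The plan is to reduce the claim to the convergence $TV_{p_n}(u)\to TV_p(u)$ for the fixed function $u\in BV(Q)$, and to obtain the latter by squeezing $TV_{p_n}(u)$ between two multiples of $TV_p(u)$ via the norm equivalence already recorded in \eqref{equivalent_semi_TV_p}. Since $\alpha_n\to\alpha\in\R^+$, once $TV_{p_n}(u)\to TV_p(u)$ is established the full limit $\lim_{n\to\infty}\alpha_nTV_{p_n}(u)=\alpha TV_p(u)$ exists, so in particular the $\limsup$ coincides with this value. I would first record that the right-hand side is finite: since $u\in BV(Q)$ we have $TV_2(u)<+\infty$, and by \eqref{equivalent_semi_TV_p} applied to the pair $\{p,2\}$ one gets $TV_p(u)\leq N^{\abs{1/p^*-1/2}}TV_2(u)<+\infty$, which is essential so that the squeezing factors below do not produce an indeterminate product.

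Next I would rewrite \eqref{equivalent_semi_TV_p} in a two-sided, symmetric form valid for any pair $p_1,p_2\in[1,+\infty]$, namely
\[
N^{-\abs{1/p_1^*-1/p_2^*}}\,TV_{p_2}(u)\leq TV_{p_1}(u)\leq N^{\abs{1/p_1^*-1/p_2^*}}\,TV_{p_2}(u),
\]
which follows at once from \eqref{equivalent_semi_TV_p} by distinguishing the cases $p_1\leq p_2$ and $p_1\geq p_2$ (in each case one of the two bounds is even trivial, since $N^{\pm\abs{\cdots}}$ lies on the correct side of $1$). Applying this with $p_1=p_n$ and $p_2=p$ yields, for every $n$,
\[
N^{-\abs{1/p_n^*-1/p^*}}\,TV_{p}(u)\leq TV_{p_n}(u)\leq N^{\abs{1/p_n^*-1/p^*}}\,TV_{p}(u).
\]

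It then remains to control the exponent. Using the relation $1/p^*=1-1/p$ one has $\abs{1/p_n^*-1/p^*}=\abs{1/p_n-1/p}$, and since $p\mapsto 1/p$ is continuous on $[1,+\infty]$ (with the convention $1/\infty=0$) and $p_n\to p$, it follows that $\abs{1/p_n^*-1/p^*}\to 0$, whence $N^{\pm\abs{1/p_n^*-1/p^*}}\to 1$. The squeeze theorem then gives $TV_{p_n}(u)\to TV_p(u)$, and multiplying by $\alpha_n\to\alpha$ completes the argument.

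The main obstacle, and essentially the only point deserving care, is the behaviour at the endpoints $p\in\{1,+\infty\}$: one must verify that the conjugate map $p\mapsto 1/p^*=1-1/p$ is continuous up to and including these endpoints, so that the squeezing factors genuinely tend to $1$ and the finiteness of $TV_p(u)$ makes the product well defined. Should one prefer to avoid the equivalence constants, an alternative route is to use the integral representation $TV_p(u)=\int_Q\abs{dDu/d\abs{Du}}_p\,d\abs{Du}$, note that $p\mapsto\abs{\theta}_p$ is continuous for each fixed vector $\theta$, and invoke dominated convergence with the constant dominating function $\sqrt N$, since $\abs{\theta}_{p_n}\leq\abs{\theta}_1\leq\sqrt N\,\abs{\theta}_2=\sqrt N$ for the $\ell^2$-unit density $\theta$ and $\abs{Du}(Q)<+\infty$; this delivers the same conclusion by \eqref{equivalent_p_norm}.
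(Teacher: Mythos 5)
Your proposal is correct and takes essentially the same route as the paper: both arguments hinge on the two-sided equivalence \eqref{equivalent_semi_TV_p} together with the fact that $N^{\pm\abs{1/p_n-1/p}}\to 1$ as $p_n\to p$, the only cosmetic difference being that the paper obtains the lower bound by invoking Proposition \ref{compact_semi_para} with the constant sequence $u_n\equiv u$ (which unwinds to exactly your squeeze inequality), while you derive both bounds directly. Your explicit checks of the finiteness of $TV_p(u)$ and of the continuity of $p\mapsto 1/p^{\ast}$ at the endpoints $p\in\{1,+\infty\}$ make precise what the paper leaves implicit in the phrase ``sending $p_n\to p$''.
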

\begin{proof}
For simplicity, we only analyze this proposition under assumption $\alpha_n=1$ for all $n\in\N$. All arguments also hold for a general sequence $\seqn{\alpha_n}$ since $\alpha\in\R^+$.\\\\
The liminf inequality
\be
\liminf_{n\to\infty} TV_{p_n}(u)\geq TV_p(u)
\ee
is a direct consequence of Proposition \ref{compact_semi_para} by choosing $u_n:=u$. Next, by \eqref{equivalent_semi_TV_p} we have that 
\be
TV_{p_n}(u)\leq N^{\abs{1/p_n-1/p}} TV_p(u).
\ee
and the limsup inequality 
\be
\limsup_{n\to\infty} TV_{p_n}(u)\leq TV_p(u).
\ee
is asserted by sending $p_n\to p$.
\end{proof}

\begin{proof}[Proof of Theorem \ref{thm:new-Gamma}]
Let $(\alpha_n,p_n)\to(\alpha_0,p_0)\in\R^+\times [1,+\infty]$ be given. We obtain Property {\rm (LI)} in view of Proposition \ref{compact_semi_para}. Property {\rm (RS)} follows by Proposition \ref{new_equal}, choosing $u_n=u_0$ for every $n\in \N$.
\end{proof}
%
%
%
%
%
%
%
%
\subsection{Bilevel training scheme $\mathcal T$ and existence of solutions}\label{sec_existence_training}
We recall the training ground $\T$ from \eqref{intro_training_ground} and two levels of the scheme $\mathcal T$ are
\begin{flalign}
\text{Level 1. }&\,\,\,\,\,\,\,\,\,\,\,\,\,\,\,\,\,\,\,\,\,\,( \alpha_\T, p_\T)\in\mathbb A[\T]:=\argmin\flp{\norm{u_{\alpha,p}-u_c}_{L^2(Q)}^2:\,\, (\alpha,p)\in\T},\tag{{$\mathcal T$-L1}}\label{scheme_B1_BV_p_main}&\\
\text{Level 2. }&\,\,\,\,\,\,\,\,\,\,\,\,\,\,\,\,\,\,\,\,\,\,u_{\alpha,p}:=\argmin\flp{\norm{u-u_\eta}_{L^2(Q)}^2+\alpha {TV_p(u)}:\,\, u\in BV(Q)}\tag{{$\mathcal T$-L2}}\label{scheme_B2_BV_p_main}.&
\end{flalign}
The following theorem is the main result of Section \ref{sec_existence_training}.
\begin{theorem}\label{main_scheme_K_result}
Let $u_c\in BV(Q)$ and $u_\eta\in L^2(Q)$ be given such that 
\be\label{req_main_scheme_K_result}
TV_\infty(u_\eta)> TV_1(u_c).
\ee
Then, the training scheme $\mathcal T$ \eqref{scheme_B1_BV_p_main}-\eqref{scheme_B2_BV_p_main} admits at least one solution $(\alpha_\T, p_\T)\in (0,\alpha_U]\times[1,+\infty]$, where the upper bound  $\alpha_U\in\R^+$ is determined in Proposition \ref{prop:last_word_go_upper}.
\end{theorem}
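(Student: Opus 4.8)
The plan is to apply the direct method of the calculus of variations to the assessment function $\mathcal A(\alpha,p)=\norm{u_{\alpha,p}-u_c}_{L^2(Q)}^2$ on the training ground $\T=\R^+\times[1,+\infty]$, taking the $\Gamma$-convergence of Theorem \ref{thm:new-Gamma} as the source of lower semicontinuity. Since the factor $[1,+\infty]$ is already compact, the only ways a minimising sequence $(\alpha_n,p_n)$ can fail to converge inside $\T$ are the two degeneracies $\alpha_n\to+\infty$ and $\alpha_n\to0^+$; the core of the proof is to exclude both, the first by a structural upper bound and the second by the hypothesis \eqref{req_main_scheme_K_result}.

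First I would bound $\alpha$ from above. A standard feature of the ROF-type Level 2 problem is that once $\alpha$ exceeds a threshold the minimiser collapses to the constant $\bar u_\eta$ (the mean of $u_\eta$): indeed $u_{\alpha,p}=\bar u_\eta$ as soon as $\tfrac{2}{\alpha}(u_\eta-\bar u_\eta)\in\partial TV_p(\bar u_\eta)$, and this subdifferential is a dual-norm ball whose radius is controlled uniformly in $p\in[1,+\infty]$ through the equivalences \eqref{equivalent_semi_TV_p}. This furnishes a $p$-independent $\alpha_U$ — the bound of Proposition \ref{prop:last_word_go_upper} — beyond which $\mathcal A(\alpha,p)\equiv\norm{\bar u_\eta-u_c}_{L^2(Q)}^2$ is constant in $\alpha$; hence $\inf_\T\mathcal A=\inf_{(0,\alpha_U]\times[1,+\infty]}\mathcal A$ and it suffices to produce a minimiser in the restricted domain.

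The crucial step is to rule out $\alpha_n\to0^+$, and this is where \eqref{req_main_scheme_K_result} enters. Testing the minimality of $u_{\alpha,p}$ against a fixed $BV(Q)$-approximation of $u_\eta$ and using $TV_p\leq TV_1$ shows $u_{\alpha,p}\to u_\eta$ in $L^2(Q)$ as $\alpha\to0^+$, uniformly in $p$, so that $\mathcal A(\alpha,p)\to\norm{u_\eta-u_c}_{L^2(Q)}^2$. I would then show this limit strictly exceeds $\inf_\T\mathcal A$. Writing the Euler--Lagrange relation $u_\eta-u_{\alpha,p}=\tfrac{\alpha}{2}z$ with $z\in\partial TV_p(u_{\alpha,p})$, and using $\langle z,u_{\alpha,p}\rangle=TV_p(u_{\alpha,p})$ together with $\langle z,v\rangle\leq TV_p(v)$, a direct expansion of $\norm{u_{\alpha,p}-u_c}^2$ yields
\be
\mathcal A(\alpha,p)-\norm{u_\eta-u_c}_{L^2(Q)}^2\leq -\tfrac{\alpha^2}{4}\norm{z}_{L^2(Q)}^2-\alpha\fsp{TV_p(u_{\alpha,p})-TV_p(u_c)}.
\ee
Because $TV_p(u_\eta)\geq TV_\infty(u_\eta)>TV_1(u_c)\geq TV_p(u_c)$ by \eqref{equivalent_semi_TV_p} and \eqref{req_main_scheme_K_result}, lower semicontinuity of $TV_p$ along $u_{\alpha,p}\to u_\eta$ makes the bracket strictly positive for small $\alpha$ (it tends to $TV_p(u_\eta)-TV_p(u_c)>0$, or to $+\infty$ if $u_\eta\notin BV(Q)$), so the right-hand side is negative and $\inf_\T\mathcal A<\norm{u_\eta-u_c}_{L^2(Q)}^2$ already for a single well-chosen $p$.

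Finally I would close by the direct method: a minimising sequence $(\alpha_n,p_n)\in(0,\alpha_U]\times[1,+\infty]$ admits, by compactness of $[0,\alpha_U]\times[1,+\infty]$, a subsequence with $(\alpha_n,p_n)\to(\alpha_\T,p_\T)$, and the previous step forces $\alpha_\T>0$. Since then $\alpha_\T\in\R^+$, Theorem \ref{thm:new-Gamma} gives $\mathcal I_{\alpha_n,p_n}\wtog\mathcal I_{\alpha_\T,p_\T}$ while Proposition \ref{compact_semi_para} provides equicoercivity; the fundamental theorem of $\Gamma$-convergence, together with strict convexity of the $L^2$ fidelity (which makes $u_{\alpha_\T,p_\T}$ the unique minimiser of the limit), yields $u_{\alpha_n,p_n}\wtos u_{\alpha_\T,p_\T}$ in $BV(Q)$. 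As these minimisers are bounded in $L^2(Q)$ this upgrades to weak convergence in $L^2(Q)$, and weak lower semicontinuity of the fidelity gives $\mathcal A(\alpha_\T,p_\T)\leq\liminf_n\mathcal A(\alpha_n,p_n)=\inf_\T\mathcal A$, so $(\alpha_\T,p_\T)\in(0,\alpha_U]\times[1,+\infty]$ is the sought solution. I expect the positivity step to be the main obstacle: it rests on the first-order characterisation of $u_{\alpha,p}$ and on converting the pointwise gap $TV_\infty(u_\eta)>TV_1(u_c)$ into the strict inequality $\inf_\T\mathcal A<\norm{u_\eta-u_c}_{L^2(Q)}^2$, with the added subtlety that $u_\eta$ need not belong to $BV(Q)$.
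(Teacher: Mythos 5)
Your proposal is correct and follows essentially the same route as the paper's proof: a uniform-in-$p$ upper bound $\alpha_U$ obtained from the collapse of the minimiser to the mean $(u_\eta)_Q$ (Lemma \ref{stopping_2d} and Proposition \ref{prop:last_word_go_upper}), exclusion of $\alpha_n\to 0^+$ by combining the subgradient inequality (the paper's \eqref{way_compute_drop_0}) with hypothesis \eqref{req_main_scheme_K_result}, and conclusion by the direct method using the $\Gamma$-convergence of Theorem \ref{thm:new-Gamma}. The only deviation is organisational and harmless: you obtain $\inf_{\T}\mathcal A<\norm{u_\eta-u_c}_{L^2(Q)}^2$ at a single well-chosen $p$ via lower semicontinuity of $TV_p$ along $u_{\alpha,p}\to u_\eta$, whereas Proposition \ref{prop:last_word_go} proves the stronger uniform-in-$p$ statement through continuity of $\alpha\mapsto TV_p(u_{\alpha,p})$ and a contradiction argument showing $\inf_p\alpha_p>0$; both versions suffice for the theorem.
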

%
%
%
%
%
%
\begin{proposition}\label{prop:last_word_go} Let $u_c$, $u_\eta\in L^2(Q)$ be given such that \eqref{req_main_scheme_K_result} holds. Then, there exists an $\alpha_L\in \R^+$ such that
\be\label{alpha_l_result}
\sup\flp{\norm{u_{\alpha_L,p} - u_c}_{L^2(Q)}^2:\,\, p\in[1,+\infty]}<\norm{u_\eta - u_c}_{L^2(Q)}^2.
\ee
\end{proposition}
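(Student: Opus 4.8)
The plan is to produce a single small parameter $\alpha_L$ for which the reconstruction $u_{\alpha_L,p}$ is strictly closer to $u_c$ than the datum $u_\eta$ is, \emph{uniformly} in $p\in[1,+\infty]$. Note first that finiteness of the right-hand side of \eqref{req_main_scheme_K_result} forces $u_c\in BV(Q)$, so $u_c$ is an admissible competitor in \eqref{scheme_B2_BV_p_main}. The backbone of the argument is the first-order optimality condition for the Level 2 minimiser, namely $\tfrac{2}{\alpha}(u_\eta-u_{\alpha,p})\in\partial TV_p(u_{\alpha,p})$, where the subdifferential is taken in $L^2(Q)$. Testing the subgradient inequality $TV_p(u_c)\geq TV_p(u_{\alpha,p})+\tfrac{2}{\alpha}\left\langle u_\eta-u_{\alpha,p},\,u_c-u_{\alpha,p}\right\rangle_{L^2(Q)}$ against $u_c$ and expanding the squared distance around $u_\eta$, I expect to arrive at the key estimate
\[
\norm{u_{\alpha,p}-u_c}_{L^2(Q)}^2-\norm{u_\eta-u_c}_{L^2(Q)}^2\leq\alpha\fsp{TV_p(u_c)-TV_p(u_{\alpha,p})}-\norm{u_\eta-u_{\alpha,p}}_{L^2(Q)}^2 .
\]
This is the $TV_p$-analogue of the classical improvement estimate used for the ROF residual.

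With this inequality in hand the right-hand side is strictly negative as soon as $TV_p(u_{\alpha,p})>TV_p(u_c)$, so the whole problem reduces to establishing this seminorm comparison for one fixed small $\alpha_L$, with a gap that does not deteriorate as $p$ varies. Here the equivalence of the $TV_p$ seminorms from Remark \ref{equivalent_semi} does the crucial collapsing: since $TV_p(u_{\alpha,p})\geq TV_\infty(u_{\alpha,p})$ and $TV_p(u_c)\leq TV_1(u_c)$ for every $p$, it suffices to find $\alpha_L>0$ and $\delta>0$ such that $TV_\infty(u_{\alpha_L,p})\geq TV_1(u_c)+\delta$ for all $p$. This recasts a $p$-dependent family of comparisons in terms of the two fixed functionals $TV_\infty$ and $TV_1$ and the single standing hypothesis \eqref{req_main_scheme_K_result}.

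To obtain this uniform gap I would first record the uniform vanishing of the residual: testing minimality in \eqref{scheme_B2_BV_p_main} against an arbitrary $v\in BV(Q)$ and using $TV_p(v)\leq TV_1(v)$ gives $\sup_{p}\norm{u_{\alpha,p}-u_\eta}_{L^2(Q)}^2\leq\norm{v-u_\eta}_{L^2(Q)}^2+\alpha\,TV_1(v)$, whence density of $BV(Q)$ in $L^2(Q)$ yields $\sup_{p}\norm{u_{\alpha,p}-u_\eta}_{L^2(Q)}\to0$ as $\alpha\to0$ (when $u_\eta\in BV(Q)$ one may simply take $v=u_\eta$ and get the explicit rate $\alpha\,TV_1(u_\eta)$). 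I then argue by contradiction: if no uniform gap existed, I could pick $\alpha_k\to0$ and $p_k$ with $TV_\infty(u_{\alpha_k,p_k})$ below the threshold $TV_1(u_c)+\delta$; since $u_{\alpha_k,p_k}\to u_\eta$ in $L^2(Q)$ hence in $L^1(Q)$, lower semicontinuity of $TV_\infty$ with respect to $L^1$-convergence would force $TV_\infty(u_\eta)\leq TV_1(u_c)$, contradicting \eqref{req_main_scheme_K_result}. Feeding the resulting $\alpha_L,\delta$ back into the key estimate gives $\norm{u_{\alpha_L,p}-u_c}_{L^2(Q)}^2\leq\norm{u_\eta-u_c}_{L^2(Q)}^2-\alpha_L\delta$ for every $p$, and taking the supremum over $p$ closes the proof.

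The main obstacle is precisely the uniformity in $p$: for each fixed $p$ the statement is essentially the De los Reyes--Sch\"onlieb--Valkonen argument from \cite{reyes2015structure}, but a naive $p$-by-$p$ version would only produce an $\alpha_L$ depending on $p$. What rescues uniformity is the combination of the two-sided norm equivalence \eqref{equivalent_semi_TV_p} (which bounds $TV_p$ between the $p$-independent functionals $TV_\infty$ and $TV_1$) and the $p$-uniform convergence $u_{\alpha,p}\to u_\eta$; the lower semicontinuity of the single functional $TV_\infty$ then suffices, and no control of $TV_p(u_\eta)$ for each individual $p$ (in particular no assumption $u_\eta\in BV(Q)$) is needed.
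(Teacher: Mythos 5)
Your proof is correct, and it rests on the same three ingredients as the paper's: the subgradient improvement estimate (the paper's \eqref{way_compute_drop_0}), the uniform-in-$p$ convergence $u_{\alpha,p}\to u_\eta$ in $L^2(Q)$ as $\alpha\to 0$ (proved, exactly as in the paper, by testing minimality against $BV$ approximations of $u_\eta$ and using $TV_p\leq TV_1$), and lower semicontinuity of total variation under $L^1$ convergence to contradict \eqref{req_main_scheme_K_result}. Where you genuinely depart from the paper is in how the uniformity in $p$ is organized. The paper first invokes Lemma \ref{b_differential_aa} (continuity and monotonicity of $\alpha\mapsto TV_p(u_{\alpha,p})$) to select, for each $p$, a parameter $\alpha_p$ placing $TV_p(u_{\alpha_p,p})$ in an explicit window above $TV_p(u_c)$ (its \eqref{first_choose_eq}), then proves $\alpha_L:=\inf_p\alpha_p>0$ by the compactness/lower-semicontinuity contradiction; the improvement is thus obtained at the $p$-dependent parameters $\alpha_p$ and must afterwards be transferred to the single value $\alpha_L$, a step the paper leaves implicit and which needs the monotonicity in Lemma \ref{b_differential_aa} to be rigorous. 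You instead collapse all $p$-dependence at the outset through the sandwich $TV_\infty\leq TV_p\leq TV_1$ from \eqref{equivalent_semi_TV_p}, and prove a uniform gap $TV_\infty(u_{\alpha_L,p})\geq TV_1(u_c)+\delta$ at one fixed $\alpha_L$, applying lower semicontinuity only to the single functional $TV_\infty$ rather than to $TV_{p_n}$ along a varying sequence $p_n$ (which in the paper is delegated to Proposition \ref{compact_semi_para}). This buys a conclusion already in the shape of the statement — one $\alpha_L$ valid for all $p$, with the explicit drop $\alpha_L\delta$ — and dispenses with Lemma \ref{b_differential_aa} entirely; the paper's windowing, in exchange, keeps quantitative track of where $TV_p(u_{\alpha_p,p})$ sits relative to $TV_p(u_c)$, in the spirit of the estimates it develops later.

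One small repair: in your contradiction step, with $\delta$ fixed, lower semicontinuity only yields $TV_\infty(u_\eta)\leq TV_1(u_c)+\delta$, not $TV_\infty(u_\eta)\leq TV_1(u_c)$. Either send $\delta_k\to 0$ along the contradicting sequence, or fix $\delta$ a priori below the (possibly infinite) gap $TV_\infty(u_\eta)-TV_1(u_c)$, say half of it when finite; with that choice the contradiction with \eqref{req_main_scheme_K_result} is exact.
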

\begin{proof}
Fix $\alpha>0$ and let $\partial TV_p(u)$ denotes the sub-differential of $TV_p$ at $u$, we observe that 
\begin{align}\label{way_compute_drop_0}
&\norm{u_\eta - u_c}_{L^2(Q)}^2-\norm{u_{\alpha,p} - u_c}_{L^2(Q)}^2 \\
&= 2\fjp{u_\eta-u_{\alpha,p},u_\alpha-u_c}_{L^2}+\norm{u_\eta-u_{\alpha,p}}_{L^2(Q)}^2\\
&= 2\alpha\fjp{\partial TV_p(u_{\alpha,p}),u_{\alpha,p}-u_c}_{L^2}+\norm{u_\eta-u_{\alpha,p}}_{L^2(Q)}^2\\
&\geq 2\alpha\fsp{TV_p(u_{\alpha,p}) -TV_p(u_c)}+\norm{u_\eta-u_{\alpha,p}}_{L^2(Q)}^2,
\end{align}
where at the last inequality we used the property of sub-gradient operator (see \cite[Proposition 5.4, page 24]{MR1727362}). \\\\
Recall from Lemma \ref{b_differential_aa} that $TV_p(u_{\alpha,p})$ is continuously decreasing with respect to $\alpha$, and thus we can find $\alpha_p>0$, might depend on $p$, such that 
\begin{align}\label{first_choose_eq}
&TV_p(u_c)+[TV_\infty(u_\eta)-TV_1(u_c)]/4\\
&>TV_p(u_{\alpha_p,p}) >TV_p(u_c)+[TV_\infty(u_\eta)-TV_1(u_c)]/8,
\end{align}
provided that \eqref{req_main_scheme_K_result} holds. Therefore, by \eqref{way_compute_drop_0} we have
\begin{align}\label{jianyanjiasheal_dds}
&\norm{u_\eta - u_c}_{L^2(Q)}^2-\norm{u_{\alpha_p,p} - u_c}_{L^2(Q)}^2\\
&\geq 2\alpha\fsp{TV_p(u_{\alpha_p,p}) -TV_p(u_c)}+\norm{u_\eta-u_{\alpha_p,p}}_{L^2(Q)}^2\\
&\geq 2\alpha_p [TV_\infty(u_\eta)-TV_1(u_c)]/8+\norm{u_\eta-u_{\alpha_p,p}}_{L^2(Q)}^2>0.
\end{align}
We next claim that
\be\label{jianyanjiasheal_d}
\alpha_L:=\inf\flp{\alpha_{p}:\,\, p\in[1,+\infty]}>0.
\ee
Assume that not, that is, there exists sequence $p_n\to  p\in[1,+\infty]$ such that 
\be\label{jianyanjiashe}
\limn \alpha_{{p_n}}\searrow0.
\ee
We claim that $u_{\alpha_{p_n},p_n}\to u_\eta$ strongly in $L^2$. Let $\seqk{u_{\eta,k}}\subset C^\infty(\bar Q)$ be such that $u_{\eta,k}\to u_\eta$ strongly in $L^2$, and by the optimality condition of $u_{\alpha_{p_n},p_n}$, we deduce that
\begin{align}\label{goes_to_noisy}
&\norm{u_\eta-u_{\alpha_{p_n},p_n}}_{L^2(Q)}^2+\alpha_{p_n}TV_{p_n}(u_{\alpha_{p_n},p_n})\\
&\leq \norm{u_\eta-u_{\eta,k}}_{L^2(Q)}^2+\alpha_{p_n}TV_{p_n}(u_{\eta,k})\\
&\leq \norm{u_\eta-u_{\eta,k}}_{L^2(Q)}^2+\alpha_{p_n}TV_1(u_{\eta,k}).
\end{align}
Thus, by \eqref{jianyanjiashe} and letting $n\to\infty$ first and $k\to \infty$ second, we conclude that 
\begin{align}\label{goes_to_noisy2}
&\limsup_{k,n\to\infty}\norm{u_\eta-u_{\alpha_{p_n},p_n}}_{L^2(Q)}^2+\alpha_{p_n}TV_{p_n}(u_{\alpha_{p_n},p_n})\\
&\leq \limsup_{k\to\infty}\norm{u_\eta-u_{\eta,k}}_{L^2(Q)}^2=0.
\end{align}
That is, we have $u_{\alpha_{p_n},p_n}\to u_\eta$ strongly in $L^2(Q)$ and, upon extracting a further subsequence (not relabeled), there holds $p_n\to p$ and
\be
\liminf_{n\to\infty} TV_{p_n}(u_{\alpha_{p_n},p_n})\geq TV_{ p}(u_\eta)>TV_p(u_c)+[TV_\infty(u_\eta)-TV_1(u_c)]/4,
\ee
which contradicts \eqref{first_choose_eq}. This completes the proof of \eqref{jianyanjiasheal_d}.\\\\
Now we prove \eqref{alpha_l_result}. In view of \eqref{jianyanjiasheal_dds} and \eqref{jianyanjiasheal_d} we have
\begin{align}
&\norm{u_\eta-u_{\alpha_p,p}}_{L^2(Q)}^2\\
&\leq \norm{u_\eta - u_c}_{L^2(Q)}^2-2\alpha_p [TV_\infty(u_\eta)-TV_1(u_c)]/8\\
&\leq  \norm{u_\eta - u_c}_{L^2(Q)}^2-2\alpha_L [TV_\infty(u_\eta)-TV_1(u_c)]/8,
\end{align}
and thus we conclude \eqref{alpha_l_result} since the right hand side of above inequality does not depends on $p$.
\end{proof}

Next, we determine a uniform upper bound on tha optimal regularization parameter $\alpha_\T$. We start with the following lemma, where $(u_\eta)_Q$ denotes the average of $u_\eta$ over $Q$, i.e. 
\be
(u_\eta)_Q:=\fint_Q u_\eta\,dx.
\ee
\begin{lemma}\label{stopping_2d}
Let $p\in[1,+\infty]$ be fixed and $u_{alpha,p}$ the minimiser of \eqref{scheme_B2_BV_p_main}. Then there exists $\alpha_{U_p}=\alpha_{U_p}(u_\eta)<+\infty$ such that 
\be
TV_p(u_{\alpha,p})>0\text{ for all }\alpha<\alpha_{U_p} 
\ee
and
\be\label{strictly_upp_bdd_zero}
u_{\alpha,p}=(u_\eta)_Q\text{ for all }\alpha\geq \alpha_{U_p}.
\ee
\end{lemma}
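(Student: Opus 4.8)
The plan is to pin down $\alpha_{U_p}$ as the exact threshold at which the unique minimiser $u_{\alpha,p}$ collapses onto the best constant approximation of $u_\eta$, using the subdifferential optimality condition already exploited in the proof of Proposition \ref{prop:last_word_go}. Since the fidelity $\norm{\cdot-u_\eta}_{L^2(Q)}^2$ is strictly convex, $\mathcal I_{\alpha,p}$ has a unique minimiser, characterised by $\tfrac{2}{\alpha}(u_\eta-u_{\alpha,p})\in\partial TV_p(u_{\alpha,p})$, where $\partial TV_p$ is the $L^2$-subdifferential of the convex, lower semicontinuous, positively $1$-homogeneous seminorm $TV_p$. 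Writing $w:=u_\eta-(u_\eta)_Q$, which has zero mean on $Q$ because $\abs{Q}=1$ forces $(u_\eta)_Q=\int_Q u_\eta\,dx$, I would define
\bes
\alpha_{U_p}:=2\,\sup\flp{\frac{\int_Q w\,v\,dx}{TV_p(v)}:\ v\in BV(Q),\ TV_p(v)>0}.
\ees

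Next I would use $1$-homogeneity to compute $\partial TV_p(c)$ at a constant $c$: one checks that $z\in\partial TV_p(c)$ if and only if $\fjp{z,c}_{L^2}=0$ and $\fjp{z,v}_{L^2}\leq TV_p(v)$ for all $v\in BV(Q)$. Testing with $z=\tfrac2\alpha w$ and $c=(u_\eta)_Q$, the first condition holds automatically since $\int_Q w\,dx=0$, while the second reads $\tfrac2\alpha\int_Q w\,v\,dx\leq TV_p(v)$ for every $v$, which by the definition of $\alpha_{U_p}$ is equivalent to $\alpha\geq\alpha_{U_p}$. Hence for $\alpha\geq\alpha_{U_p}$ the constant $(u_\eta)_Q$ satisfies the optimality relation, so by uniqueness $u_{\alpha,p}=(u_\eta)_Q$, which is \eqref{strictly_upp_bdd_zero}. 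Conversely, for $\alpha<\alpha_{U_p}$ the definition of the supremum provides some $v$ with $\tfrac2\alpha\int_Q w\,v\,dx> TV_p(v)$, so $(u_\eta)_Q$ is not a minimiser; since $(u_\eta)_Q$ is the unique best constant $L^2$-approximation of $u_\eta$, no constant can be optimal, and therefore $u_{\alpha,p}$ is non-constant, giving $TV_p(u_{\alpha,p})>0$.

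The main obstacle is to show that $\alpha_{U_p}<+\infty$, i.e. that the dual quantity defining it is finite for a general datum $u_\eta\in L^2(Q)$. I would first reduce to the case $p=2$: by \eqref{equivalent_semi_TA_all} we have $TV_p(v)\geq N^{-1}TV_2(v)$, whence the supremum for $TV_p$ is at most $N$ times the one for $TV_2$. To bound the latter, restrict (without loss of generality, since $w$ has zero mean) to zero-mean competitors $v$ and invoke the Poincar\'e--Sobolev embedding $BV(Q)\hookrightarrow L^{N/(N-1)}(Q)$, which gives $\norm{v}_{L^{N/(N-1)}(Q)}\leq C\,TV_2(v)$; then H\"older's inequality yields $\int_Q w\,v\,dx\leq\norm{w}_{L^{N}(Q)}\norm{v}_{L^{N/(N-1)}(Q)}$, bounded uniformly over $\flp{TV_2(v)\leq1}$. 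This is finite for $L^2$ data in the planar case $N=2$ relevant to imaging, where $L^{N/(N-1)}=L^2$, and more generally whenever $u_\eta$ carries the integrability $L^{N}(Q)$ needed to pair with the embedding; controlling this dual pairing uniformly is the delicate point of the argument.
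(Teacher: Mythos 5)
Your proof is correct where the lemma itself is sound, and while it rests on the same mechanism as the paper's proof --- testing the subdifferential optimality condition at the constant $(u_\eta)_Q$ --- the execution is genuinely different. The paper obtains \emph{some} large $\alpha_0$ with $u_{\alpha_0,p}$ constant by citing \cite{meyer2001oscillating} for the nonempty relative interior of $\partial TV(0)$, identifies that constant as $(u_\eta)_Q$ by the averaging computation, propagates constancy to all $\alpha\geq\alpha_0$ via the monotonicity of Lemma \ref{b_differential_aa}, and finally invokes the $\Gamma$-convergence of Theorem \ref{thm:new-Gamma} to show that the infimum defining $\alpha_{U_p}$ is attained. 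Your closed-form threshold $\alpha_{U_p}=2\sup\flp{\int_Q wv\,dx/TV_p(v):\,TV_p(v)>0}$, with $w=u_\eta-(u_\eta)_Q$, delivers all of this in one stroke: the inclusion $\tfrac2\alpha w\in\partial TV_p\fsp{(u_\eta)_Q}$ is \emph{equivalent} to $\alpha\geq\alpha_{U_p}$ (a non-strict, hence closed, condition, so the endpoint case needs no $\Gamma$-convergence), and for $\alpha<\alpha_{U_p}$ your remark that any constant minimiser would have to coincide with $(u_\eta)_Q$ gives $TV_p(u_{\alpha,p})>0$ directly, without the monotonicity lemma. The trade-off is that you must prove finiteness of the supremum by hand, and your Sobolev--Poincar\'e/H\"older argument does so only for $N=2$ (where $L^{N/(N-1)}=L^2$) or, in higher dimension, under the extra integrability $u_\eta\in L^N(Q)$. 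But this caveat exposes a defect of the lemma rather than of your proof: for $N\geq3$ take $w(x)=\abs{x-x_0}^{-s}$ with $1<s<N/2$, so that $w\in L^2(Q)$, and test with $v_r:=r^{1-N}\chi_{B_r(x_0)}$; then $TV_p(v_r)$ remains bounded as $r\to0$ while $\int_Q wv_r\,dx\approx r^{1-s}\to+\infty$, so the supremum is infinite and no finite $\alpha$ produces a constant minimiser. The paper's appeal to Meyer silently hides exactly this dimension dependence, so on the finiteness question your argument is the more honest one; in the planar imaging setting relevant to the paper both proofs are complete, and yours is the more quantitative, since it identifies $\alpha_{U_p}$ exactly as (twice) the dual $G$-norm of $w$ rather than merely asserting its existence.
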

\begin{proof}
Since $p\in[1,+\infty]$ is fixed, we abbreviate $TV_p$, $u_{\alpha,p}$, and $\partial TV_p$, by $TV$,  $u_\alpha$, and $\partial TV$, respectively, in this proof. We note that the null space \be\label{space_of_constant}
\mathcal N(TV) = \flp{u\in L^1(Q),\,\, TV(u)=0},
\ee
of the total variation semi-norm is the space of constant functions (see, e.g., \cite{ambrosio2000functions}), which is a linear subspace of $L^1(Q)$. Let $\mathbb P[\cdot]$ denote the projection operator onto $\mathcal N(TV)$, and thus  $\mathbb P[u_\eta]$ is a constant by \eqref{space_of_constant}. We claim that 
\be\label{nonempty_interior}
\frac1\alpha\fsp{u_\eta-\mathbb P[u_\eta]}\in \partial TV(0)
\ee
for $\alpha>0$ large enough. Indeed, since $\partial TV(0)$ has nonempty relative interior in $\mathcal N(TV)$ (see, e.g., \cite{meyer2001oscillating}), we have that \eqref{nonempty_interior} holds for $\alpha\in\R^+$ sufficiently large since $u_\eta \in L^2(Q)$ and $\mathbb P[u_\eta]$ is a constant. Let $\alpha_0>0$ be large enough such that \eqref{nonempty_interior} hold. Then we have 
\be
\frac1{\alpha_0}\fsp{u_\eta-\mathbb P[u_\eta]}\in \partial TV(0) = \partial TV(\mathbb P[u_\eta]),
\ee
where in the last inequality we used again the fact that $\mathbb P[u_\eta]$ is a constant. That is, we have 
\be
\frac1{\alpha_0}\fsp{u_\eta-\mathbb P[u_\eta]}\in \partial TV(\mathbb P[u_\eta]),
\ee
and hence $\mathbb P[u_\eta]$ satisfies optimal condition of \eqref{scheme_B2_BV_p_main} and we conclude that $\mathbb P[u_\eta]=u_{\alpha_0}$. Therefore, we have $u_{\alpha_0}$ is a constant.\\\\
We claim next that $u_{\alpha_0}=(u_\eta)_Q$. Again by optimality condition we have
\be
\norm{u_{\alpha_0}-u_\eta}_{L^2(Q)}^2+\alpha TV(u_{\alpha_0})\leq \norm{(u_\eta)_Q-u_\eta}_{L^2(Q)}^2,
\ee
that is
\be\label{jiushizheli}
\int_Q\abs{u_{\alpha_0}-u_\eta}^2dx\leq \int_Q\abs{u_\eta-\fsp{u_\eta}_Q}^2dx.
\ee
Note that for $\lambda\in \R$,
\be
\frac d{d\lambda} \int_Q \abs{\lambda-u_\eta}^2dx =2 \int_Q\fsp{\lambda-u_\eta}dx,
\ee
which implies that the left hand side of \eqref{jiushizheli} reaches the minimum value at $\lambda=(u_\eta)_Q$. Thus, we have $u_{\alpha_0}=(u_\eta)_Q$ and we deduce that $u_\alpha=(u_\eta)_Q$ for all $\alpha\geq \alpha_0$. \\\\
Define 
\be
\alpha_{U_p}:=\inf\flp{\alpha>0,\,\, u_\alpha=(u_\eta)_Q},
\ee
and let $\seqn{\alpha_n}\subset\flp{\alpha>0,\,\, u_\alpha=(u_\eta)_Q}$ be such that $\alpha_n\searrow\alpha_{U_p}$. Thus, in view of Theorem \ref{thm:new-Gamma}, we conclude that $u_{\alpha_{U_p}}=(u_\eta)_Q$, and hence the claim is true.
\end{proof}

\begin{proposition}\label{prop:last_word_go_upper}
Let $u_c$, $u_\eta\in L^2(Q)$ be given such that \eqref{req_main_scheme_K_result} hold. Then, there exists $\alpha_U\in \R^+$ such that the following assertions hold.
\begin{enumerate}[1.]
\item
For all $\alpha\geq \alpha_U/2$ and $p\in[1,+\infty]$, we have
\be\label{alpha_r_result}
TV_p(u_{\alpha,p})=0\text{ and }u_{\alpha,p}=(u_\eta)_Q.
\ee
\item
The value of $\alpha_U$ can be determined numerically.
\end{enumerate}
\end{proposition}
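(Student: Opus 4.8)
The plan is to upgrade the index-by-index thresholds produced by Lemma \ref{stopping_2d} into a single threshold valid simultaneously for all $p\in[1,+\infty]$, and then to bound this threshold by a numerically accessible quantity. Throughout write $w:=u_\eta-(u_\eta)_Q$. By Lemma \ref{stopping_2d}, for each fixed $p$ there is a finite $\alpha_{U_p}$ with $u_{\alpha,p}=(u_\eta)_Q$ whenever $\alpha\geq\alpha_{U_p}$; inspecting its proof, $\alpha_{U_p}$ is precisely the least $\alpha>0$ for which the inclusion $\tfrac1\alpha w\in\partial TV_p(0)$ holds, so that
\be
\alpha_{U_p}=\sup\flp{\fjp{w,u}_{L^2}/TV_p(u):\,\,u\in BV(Q),\,\,TV_p(u)>0}=:\norm{w}_{\ast,p}.
\ee

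The first and decisive step is uniformity in $p$. I would exploit the ordering of the seminorms in Remark \ref{equivalent_semi}: since $TV_\infty(u)\leq TV_p(u)$ for every $u\in BV(Q)$ and every $p$, the constraint $\fjp{v,u}_{L^2}\leq TV_\infty(u)$ is more demanding than $\fjp{v,u}_{L^2}\leq TV_p(u)$, which yields the nesting $\partial TV_\infty(0)\subseteq\partial TV_p(0)$. Hence, as soon as $\tfrac1\alpha w\in\partial TV_\infty(0)$---that is, for $\alpha\geq\alpha_{U_\infty}$---we also have $\tfrac1\alpha w\in\partial TV_p(0)$ for every $p$, and Lemma \ref{stopping_2d} forces $u_{\alpha,p}=(u_\eta)_Q$. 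This gives the uniform bound $\alpha_{U_p}\leq\alpha_{U_\infty}<+\infty$ for all $p\in[1,+\infty]$, i.e.\ $\sup_{p}\alpha_{U_p}=\alpha_{U_\infty}$.

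For the numerical part I would trade the awkward $TV_\infty$ for the isotropic $TV_2$. Applying \eqref{equivalent_semi_TA_all} with $p=\infty$ gives $TV_2(u)\leq N\,TV_\infty(u)$, whence $1/TV_\infty(u)\leq N/TV_2(u)$ and therefore
\be
\alpha_{U_\infty}=\norm{w}_{\ast,\infty}\leq N\,\norm{w}_{\ast,2}=N\,\alpha_{U_2}.
\ee
The value $\alpha_{U_2}$ is the stopping threshold of the classical (isotropic) ROF model for the datum $u_\eta$: solving \eqref{scheme_B2_BV_p_main} with $p=2$ for increasing $\alpha$ and recording the least $\alpha$ at which the minimiser equals the constant $(u_\eta)_Q$ determines it numerically (equivalently, one evaluates the dual norm $\norm{w}_{\ast,2}$). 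I would then set $\alpha_U:=2N\,\alpha_{U_2}$. With this choice $\alpha_U/2=N\,\alpha_{U_2}\geq\alpha_{U_\infty}\geq\alpha_{U_p}$ for every $p$, so Lemma \ref{stopping_2d} yields $u_{\alpha,p}=(u_\eta)_Q$ and hence $TV_p(u_{\alpha,p})=TV_p((u_\eta)_Q)=0$ for all $\alpha\geq\alpha_U/2$ and all $p\in[1,+\infty]$, which is assertion 1; and $\alpha_U$ is computable by construction, which is assertion 2.

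The only genuine obstacle is the uniformity step: Lemma \ref{stopping_2d} is proved one $p$ at a time, and a priori nothing rules out $\alpha_{U_p}\to+\infty$ along a sequence $p_n\to p_0$. The monotone nesting $\partial TV_\infty(0)\subseteq\partial TV_p(0)$, itself a one-line consequence of the single inequality $TV_\infty\leq TV_p$ from \eqref{equivalent_semi_TV_p}, is exactly what collapses the whole family of thresholds onto the single finite number $\alpha_{U_\infty}$; everything else is bookkeeping with the equivalence constants of \eqref{equivalent_semi_TV_p} and \eqref{equivalent_semi_TA_all}.
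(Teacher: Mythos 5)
Your proof is correct and arrives at the same pivotal fact as the paper, namely $\sup_{p\in[1,+\infty]}\alpha_{U_p}=\alpha_{U_\infty}<+\infty$, but by a genuinely different route. The paper's uniformity step is variational: for $p_1\leq p_2$ it sums the two optimality conditions for $u_{\alpha,p_1}$ and $u_{\alpha,p_2}$, evaluates at $\alpha=\alpha_{U_{p_2}}$, deduces $TV_{p_1}(u_{\alpha_{U_{p_2}},p_1})=TV_{p_2}(u_{\alpha_{U_{p_2}},p_1})$, and then argues that this equality forces $u_{\alpha_{U_{p_2}},p_1}$ to be constant, whence $\alpha_{U_{p_1}}\leq\alpha_{U_{p_2}}$ and the supremum sits at $p=\infty$. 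Your step is convex-analytic: the single inequality $TV_\infty\leq TV_p$ from \eqref{equivalent_semi_TV_p} gives the nesting $\partial TV_\infty(0)\subseteq\partial TV_p(0)$, equivalently monotonicity of the dual norms $\norm{w}_{\ast,p}$ with $w:=u_\eta-(u_\eta)_Q$, and the collapse of all thresholds onto $\alpha_{U_\infty}$ is immediate. Your route is in fact more robust at the one delicate point of the paper's argument: the inference ``$TV_{p_1}(u)=TV_{p_2}(u)$ implies $u$ is constant'' is not literally valid (any $u$ depending only on $x_1$ satisfies $TV_{p_1}(u)=TV_{p_2}(u)$ for all $p_1,p_2$, since the $\ell^p$ norms agree on axis-aligned vectors), whereas the subdifferential nesting requires no equality-case analysis at all. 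The two proofs also differ on Assertion 2: the paper computes $\alpha_{U_\infty}$ directly, by Newton descent on the continuous, monotone function $\alpha\mapsto TV_\infty(u_{\alpha,\infty})$ from Lemma \ref{b_differential_aa}, and sets $\alpha_U=2\alpha_{U_\infty}$; you instead compute the classical isotropic ROF threshold $\alpha_{U_2}$ and set $\alpha_U=2N\alpha_{U_2}$ via $TV_2\leq N\,TV_\infty$. What that buys you is that only a standard $TV_2$ solver is needed; what it costs is an inflation of $\alpha_U$ by a factor up to $N$, which propagates as $\sqrt{N}$ into the error estimate \eqref{intro_estimation_last} of Theorem \ref{article_result} and enlarges the reduced training ground $[0,\alpha_U]$ (note $N^{1/2}$ would already suffice in place of $N$, since $TV_2\leq N^{1/2}TV_\infty$ by \eqref{equivalent_semi_TV_p}). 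Both choices are legitimate, since the proposition only asks for some numerically determinable upper bound; and like the paper, you drop the factor $2$ in the optimality condition $u_{\alpha,p}-u_\eta\in-\alpha\,\partial TV_p(u_{\alpha,p})$, which is harmless because it rescales all thresholds consistently.
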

\begin{proof}
For each $p\in[1,+\infty]$, let $\alpha_{U_p}>0$ be obtained from Lemma \ref{stopping_2d}. We claim that 
\be
\sup\flp{\alpha_{U,p}:\,\, p\in[1,+\infty]}<+\infty.
\ee
Take two arbitrary $p_1$ and $p_2$ such that $1\leq p_1\leq p_2\leq +\infty$. For $\alpha>0$ fixed, we have by optimality condition of \eqref{scheme_B2_BV_p_main} that 
\be
\norm{u_\eta-u_{\alpha,p_1}}_{L^2(Q)}^2+\alpha TV_{p_1}(u_{\alpha,p_1})
\leq \norm{u_\eta-u_{\alpha,p_2}}_{L^2(Q)}^2+\alpha TV_{p_1}(u_{\alpha,p_2})
\ee
and 
\be
\norm{u_\eta-u_{\alpha,p_2}}_{L^2(Q)}^2+\alpha TV_{p_2}(u_{\alpha,p_2})
\leq \norm{u_\eta-u_{\alpha,p_1}}_{L^2(Q)}^2+\alpha TV_{p_2}(u_{\alpha,p_1}).
\ee
Summing the above two inequalities yields
\begin{align}
0&\leq TV_{p_1}(u_{\alpha,p_1})-TV_{p_2}(u_{\alpha,p_1})\\
&\leq TV_{p_1}(u_{\alpha,p_2})-TV_{p_2}(u_{\alpha,p_2})\\
&\leq (N^{1/p_2^\ast-1/p_1^\ast}-1)TV_{p_2}(u_{\alpha,p_2}),
\end{align}
where at the first and last inequality we used Remark \ref{equivalent_semi}. Thus, by \eqref{strictly_upp_bdd_zero} and letting $\alpha=\alpha_{U_{p_2}}$, we infer that 
\be
0\leq TV_{p_1}(u_{\alpha_{U_{p_2}},p_1})-TV_{p_2}(u_{\alpha_{U_{p_2}},p_1})\leq0,
\ee
which, in turn, yields
\be\label{equal_zero_only}
TV_{p_1}(u_{\alpha_{U_{p_2}},p_1})= TV_{p_2}(u_{\alpha_{U_{p_2}},p_1}).
\ee
By Remark \ref{equivalent_semi} again, we have \eqref{equal_zero_only} holds unless $u_{\alpha_{U_{p_2}},p_1}\in \mathcal N(TV_{p_2})$, which implies that $u_{\alpha_{U_{p_2}},p_1}$ must be a constant. Hence, by the argument used in Lemma \ref{stopping_2d} we conclude that 
\be
u_{\alpha_{U_{p_2}},p_1}=(u_\eta)_Q\text{ and }\alpha_{U_{p_2}}\geq \alpha_{U_{p_1}}.
\ee
Therefore, we have
\be
\sup\flp{\alpha_{U_p}:\,\, p\in[1,+\infty]}\leq \alpha_{U_{\infty}}<+\infty,
\ee
and we conclude Assertion 1 by letting $\alpha_U:=2 \alpha_{U_\infty}$.\\\\
We notice that, by Lemma \ref{b_differential_aa} again, the function
\be
f(\alpha)=TV_{+\infty}(u_{\alpha,+\infty})
\ee
is continuous monotone decreasing and $f(\alpha_{U,+\infty})=0$. Hence, we can apply Newton descent to compute $\alpha_{U_\infty}$ numerically, which concludes Assertion 2.
\end{proof}
We are now ready to proof Theorem \ref{main_scheme_K_result}.
\begin{proof}[Proof of Theorem \ref{main_scheme_K_result}]
Let $u_c\in BV(Q)$ and $u_\eta\in L^2(Q)$ be given such that \eqref{req_main_scheme_K_result} holds, and recall the definition of the training ground $\T$, the assessment function $\CC(\alpha,p)$, and the optimal set $\mathbb A[\T]$ from \eqref{intro_training_ground}, \eqref{cost_map_intro}, and \eqref{scheme_B1_BV_p_main}. Let 
\be\label{eq_def_inimum_value}
m_{\T}:=\inf\flp{\norm{u_{\alpha,p}-u_c}_{L^2(Q)}:\,\, (\alpha,p)\in \T}.
\ee
We claim first that $\mathbb A[\T]$ is not empty. Let $\seqn{(\alpha_n,p_n)}\subset \T$ be a minimizing sequence obtained from \eqref{scheme_B1_BV_p_main} such that 
\be
\limn \norm{u_{\alpha_n,p_n}-u_c}_{L^2(Q)} = m_\T.
\ee
Then, up to a subsequence, there exists $(\tilde\alpha,\tilde p)\in [0,+\infty]\times [1,+\infty]$ such that $(\alpha_n,p_n)\to(\ta,\tilde p)$. Suppose for a moment that $\ta\in(0,+\infty]$. Then, in view of Theorem \ref{thm:new-Gamma} and the properties of $\Gamma$-convergence, we have
\be
u_{\alpha_n,p_n}\wtos u_{\ta,\tilde p}\text{ weakly}^\ast\text{ in }BV(Q)\text{ and strongly in }L^1(Q).
\ee
Thus, we conclude that 
\be
\norm{u_{\ta,\tilde p}-u_c}_{L^2(Q)}\leq \liminf_{n\to\infty}\norm{u_{\alpha_n,p_n}-u_C}_{L^2(Q)}=m_\T,
\ee
which implies $(\tilde\alpha,\tilde p)\in \mathbb A[\T]$. \\\\
Now we claim that $\inf\alpha_n> 0$. Indeed, assume by contradiction that $\alpha_n\searrow 0$, and in this case we already showed in \eqref{goes_to_noisy2} that $u_{\alpha_n,p_n}\to u_\eta$ in $L^2$ strong. Therefore, we have that
\be
m_\T=\liminf_{n\to\infty}\norm{u_{\alpha_n,p_n}-u_c}_{L^2(Q)}^2\geq \norm{u_\eta-u_c}_{L^2(Q)}^2,
\ee
which contradicts Proposition \ref{prop:last_word_go}. Thus, we conclude that $\inf\alpha_n>0$, which implies $\ta>0$.\\\\
We next claim that there exists at least one optimal solution $(\alpha_\T, p_\T)\in \mathbb A[\T]$ such that $\alpha_\T\leq \alpha_U<+\infty$, where $\alpha_U$ is obtained from Proposition \ref{prop:last_word_go_upper}. Suppose for all $(\alpha_\T, p_\T)\in \mathbb A[\T]$ such that $\alpha_U<\alpha_\T\leq +\infty$. Then, take arbitrary $(\alpha_0, p_0)\in\mathbb A[\T]$, \eqref{alpha_r_result} implies that 
\be
\CC(\alpha, p_0)=\CC(\alpha_0, p_0)=m_\T,\text{ for all }\alpha\geq \alpha_U/2.
\ee
In another word, we have $(\alpha_U/2, p_0)\in \mathbb A[\T]$ as desired.\\\\
Therefore, we conclude that there exists at least one $(\alpha_\T, p_\T)\in \mathbb A[\T]$ such that 
\be
0< \alpha_\T<\alpha_U<+\infty,
\ee
which completes the proof of Theorem \ref{main_scheme_K_result}.
\end{proof}
\subsection{Extension of $\ell^p$-anisotropic total variation via Finsler metrics}\label{counterexamples}
We can further extend the $\ell^p$-(an)-isotropic total variational by using the Finsler metric (see \cite{MR2067663} and Definition \ref{MR2067663}). Let $\abs{\cdot}_\om$: $\rn\to[0,+\infty)$ be a \emph{Finsler metric}. That is, we assume that the function $\abs{\cdot}$ is convex and satisfies the properties
\be
\abs{x}_\om\geq C\abs{x}_2,\,\, \abs{ax}_\om=a\abs{x}_\om,\,\, x\in\rn,\,\,a\geq 0, 
\ee
where $C\in\R^+$ is a positive constant. 
Then, we define the $\om$-total variation by
\be\label{TV_fisher}
TV_\om(u):=\sup\flp{\int_Q u\,\divg\vp\, dx:\,\,\vp\in C_c^\infty(Q;\,\rn),\,\,\abs{\vp}_\om^\ast\leq 1}<+\infty.
\ee
\begin{define}\label{MR2067663}
We say a collection $\mathbb F$ of Finsler metrics is training compatible if the following assertions hold.
\begin{enumerate}[1.]
\item
For any $\om\in\mathbb F$, $\abs{\cdot}_\om$: $\rn\to[0,+\infty)$ is a convex, positively 1-homogeneous function, and $\abs{x}_\om>0$ if $x\neq 0$.
\item
We denote the unit sphere of $\om$ by 
\be
S_\om:=\flp{x\in\rn:\,\, \abs{x}_{\om}^\ast=1}.
\ee
Then, we say $\om_n\wtof \om$ in $\mathbb F$ if 
\be
\dist(S_{\om_n}, S_\om)\to 0.
\ee
\item
(compactness) For any sequence $\seqn{\om_n}\subset\mathbb F$, there exists a subsequence, still denote by $\om_n$, such that $\om_n\wtof\om$ in $\mathbb F$.
\end{enumerate}
\end{define}
We present a similar version of Theorem \ref{main_scheme_K_result} but with $TV_\om$ variation. First, we introduce the training scheme $(\mathcal T_{\mathbb F})$ by
\begin{flalign}
\text{Level 1. }&\,\,\,\,\,\,\,\,\,\,\,\,\,\,\,\,\,\,\,\,\,\,( \alpha_\T, \om_\T)\in\mathbb A[\T_{\mathbb F}]:=\argmin\flp{\norm{u_{\alpha,\om}-u_c}_{L^2(Q)}^2:\,\, (\alpha,\om)\in\T_{\mathbb F}},\tag{{$\mathcal T_{\mathbb F}$-L1}}\label{scheme_B1_BV_p_main2}&\\
\text{Level 2. }&\,\,\,\,\,\,\,\,\,\,\,\,\,\,\,\,\,\,\,\,\,\,u_{\alpha,\om}:=\argmin\flp{\norm{u-u_\eta}_{L^2(Q)}^2+\alpha {TV_\om(u)}:\,\, u\in BV_\om(Q)}\tag{{$\mathcal T_{\mathbb F}$-L2}}\label{scheme_B2_BV_p_main2}.&
\end{flalign}
with the training ground
\be
\T_{\mathbb F}:=\R^+\times \mathbb F.
\ee
\begin{theorem}[Existence of solutions of scheme $(\mathcal T_{\mathbb F})$]\label{main_scheme_K_result2}
Let $u_c$ and $u_\eta\in BV(Q)$ be given such that 
\be
\inf\flp{TV_\om(u_\eta):\,\,\om\in\mathcal W}>\sup\flp{TV_\om(u_c):\,\,\om\in\mathbb F}.
\ee
Then, the training scheme $\mathcal T_{\mathbb F}$ \eqref{scheme_B1_BV_p_main2}-\eqref{scheme_B2_BV_p_main2} admits at least one pair of solution $(\alpha_{\T_{\mathbb F}}, \om_{\T_{\mathbb F}})\in {\T_{\mathbb F}}$, provided that $\mathbb F$ is training compatible.
\end{theorem}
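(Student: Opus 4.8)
The plan is to mirror the structure of the proof of Theorem \ref{main_scheme_K_result}, replacing the scalar parameter $p\in[1,+\infty]$ with the Finsler metric $\om\in\mathbb F$ and replacing the compactness of $[1,+\infty]$ with the training compatibility of $\mathbb F$. First I would establish the analogue of Theorem \ref{thm:new-Gamma}: that whenever $\alpha_n\to\alpha_0\in\R^+$ and $\om_n\wtof\om_0$ in $\mathbb F$, the functionals $u\mapsto\norm{u-u_\eta}_{L^2(Q)}^2+\alpha_n TV_{\om_n}(u)$ $\Gamma$-converge in the weak$^*$ topology of $BV(Q)$ to $\norm{u-u_\eta}_{L^2(Q)}^2+\alpha_0 TV_{\om_0}(u)$. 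The Finsler lower bound $\abs{x}_\om\geq C\abs{x}_2$ gives a uniform control $TV_2(u)\leq C^{-1}TV_\om(u)$, which plays the role that \eqref{equivalent_semi_TA_all} played before and furnishes the weak$^*$ compactness needed for the $\liminf$ inequality. The convergence $\dist(S_{\om_n},S_{\om_0})\to 0$ of the dual unit spheres should yield, for the $\liminf$ part, that any competitor $\vp$ admissible for $TV_{\om_0}$ can be approximated by competitors admissible for $TV_{\om_n}$, and symmetrically for the $\limsup$ (recovery) part with the constant sequence $u_n\equiv u_0$; this is the precise analogue of Proposition \ref{new_equal}.

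With the $\Gamma$-convergence in hand, I would next reprove the two-sided bounds on the optimal $\alpha$. For the lower bound, the argument of Proposition \ref{prop:last_word_go} carries over verbatim once the subgradient inequality $\fjp{\partial TV_\om(u_{\alpha,\om}),u_{\alpha,\om}-u_c}_{L^2}\geq TV_\om(u_{\alpha,\om})-TV_\om(u_c)$ is invoked; here the hypothesis $\inf_\om TV_\om(u_\eta)>\sup_\om TV_\om(u_c)$ replaces \eqref{req_main_scheme_K_result} and guarantees a uniform strictly positive gap, so that one obtains an $\alpha_L>0$ with $\sup_{\om\in\mathbb F}\norm{u_{\alpha_L,\om}-u_c}_{L^2(Q)}^2<\norm{u_\eta-u_c}_{L^2(Q)}^2$. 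The contradiction step—showing $\inf_\om\alpha_\om>0$ by ruling out $\alpha_{\om_n}\searrow 0$—uses the compactness of $\mathbb F$ to extract $\om_n\wtof\om$ and then the $\liminf$ inequality to force $TV_\om(u_\eta)$ to exceed the gap, exactly as in \eqref{goes_to_noisy2}. For the upper bound, the characterization \eqref{strictly_upp_bdd_zero} that $u_{\alpha,\om}=(u_\eta)_Q$ once $\alpha$ is large rests only on the null space of $TV_\om$ being the constants and on $\partial TV_\om(0)$ having nonempty relative interior; both hold for any Finsler metric by the bound $\abs{x}_\om\geq C\abs{x}_2$, so Lemma \ref{stopping_2d} adapts directly, and uniformity of the upper bound $\alpha_{U_\om}$ over $\om\in\mathbb F$ again follows from compactness.

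Finally, the existence argument of Theorem \ref{main_scheme_K_result} transfers. Taking a minimizing sequence $(\alpha_n,\om_n)$ for $\mathbb A[\T_{\mathbb F}]$, the uniform bounds $0<\alpha_L\leq\alpha_n\leq\alpha_U$ confine the $\alpha$-component to a compact subinterval of $\R^+$, while training compatibility extracts $\om_n\wtof\om_\T$; the $\Gamma$-convergence then gives $u_{\alpha_n,\om_n}\wtos u_{\alpha_\T,\om_\T}$ weakly$^*$ in $BV(Q)$ and strongly in $L^1(Q)$, and lower semicontinuity of the $L^2$-loss yields $(\alpha_\T,\om_\T)\in\mathbb A[\T_{\mathbb F}]$. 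I expect the main obstacle to be the $\Gamma$-convergence step, and specifically the $\limsup$/recovery inequality under the abstract convergence $\dist(S_{\om_n},S_{\om_0})\to 0$: one must show that $TV_{\om_n}(u)\to TV_{\om_0}(u)$ (or at least the correct one-sided bound) for fixed $u$, which amounts to proving that the supremum in \eqref{TV_fisher} is continuous with respect to Hausdorff convergence of the dual admissible sets $\flp{\abs{\vp}_\om^\ast\leq 1}$. Unlike the $\ell^p$ case, where the explicit inequality \eqref{equivalent_semi_TV_p} made both bounds elementary, here the uniform equi-Lipschitz control of the metrics $\abs{\cdot}_{\om_n}$ away from the origin—needed to pass the sphere convergence through the supremum uniformly in $\vp$—will require care and is where the bulk of the work lies.
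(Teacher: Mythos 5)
Your proposal is correct and follows essentially the same route as the paper: the paper's own proof is a two-sentence remark stating that one repeats the proof of Theorem \ref{main_scheme_K_result} line by line, with the equivalence \eqref{equivalent_semi_TV_p} replaced by assertions 2 and 3 of Definition \ref{MR2067663} — precisely the substitution (Finsler lower bound for compactness, dual-sphere convergence for the $\Gamma$-liminf/limsup, compactness of $\mathbb F$ for extracting limits) that you carry out in detail. Your closing observation, that the recovery inequality $TV_{\om_n}(u)\to TV_{\om_0}(u)$ under Hausdorff convergence of the dual unit spheres requires uniform control of the metrics, is a genuine subtlety the paper's terse proof glosses over, and your plan places the needed work exactly where it belongs.
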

\begin{proof}
The proof can be obtained by following line by line of the argument presented in the proof of Theorem \ref{main_scheme_K_result}. In particular, the equivalent property \eqref{equivalent_semi_TV_p}, which used extensively in the proof of Theorem \ref{thm:new-Gamma}, can be replaced by assertions 2 and 3 in Definition \ref{MR2067663}.
\end{proof}
We conclude this section by presenting several examples of training compatible $\mathbb F$.
\begin{enumerate}[1.]
\item
The $\ell^p$ - Euclidean norm defined in \eqref{p_euclidean_def}. That is, we define
\be
\mathbb F:=\flp{\abs{x}_p:=\fsp{\sum_{i=1}^N\abs{x_i}^p}^{1/p}:\,\, p\in[1,+\infty]}.
\ee
\item
The skewed $\ell^p$ - Euclidean norm. 
\be
\mathbb F:=\flp{\abs{x}_{ap}:=\fsp{\sum_{i=1}^Na_i\abs{x_i}^p}^{1/p}:\,\, p\in[1,+\infty],\,\,\sum_{i=1}^Na_i=1,\,\,a_i>0}.
\ee
\end{enumerate}

\section{Learning of acceptable optimal solutions}\label{thaos_sec}
\subsection{Non-convexity of the assessment function and counterexamples}\label{sce_Counterexamples}
We present an explicit counterexample in one dimension ($N=1$) to show that the assessment function $\CC(\alpha,p)$ is not quasi-convex. Note that as $N=1$, we have $\abs{x}_p=\abs{x}_2$ for all $p\in[1,+\infty]$. Thus, we only need to consider the case in which $p=2$ and we abbreviate $\CC(\alpha,p)$ by $\CC(\alpha)$ in Section \ref{sce_Counterexamples}.\\\\
We define the corrupted signal $u_\eta$ (red line in Figure \ref{fig:step_conter_start}) and the clean signal $u_c$ to be (blue line in Figure \ref{fig:step_conter_start})
\be
u_\eta(x):=
\begin{cases}
-10&\text{ if }x\in (0,1/4)\\
2 & \text{ if }x\in (1/4,1/2)\\
98 &\text{ if }x\in(1/2,3/4)\\
110 & \text{ if }x\in(3/4,1),
\end{cases}
\,\,\text{ and }\,\,
u_c(x):=
\begin{cases}
0&\text{ if }x\in (0,1/4)\\
20 & \text{ if }x\in (1/4,1/2)\\
80 &\text{ if }x\in(1/2,3/4)\\
100 & \text{ if }x\in(3/4,1).
\end{cases}
\ee

\begin{figure}[!h]
\begin{subfigure}{.495\textwidth}
  \centering
        \includegraphics[width=1.0\linewidth]{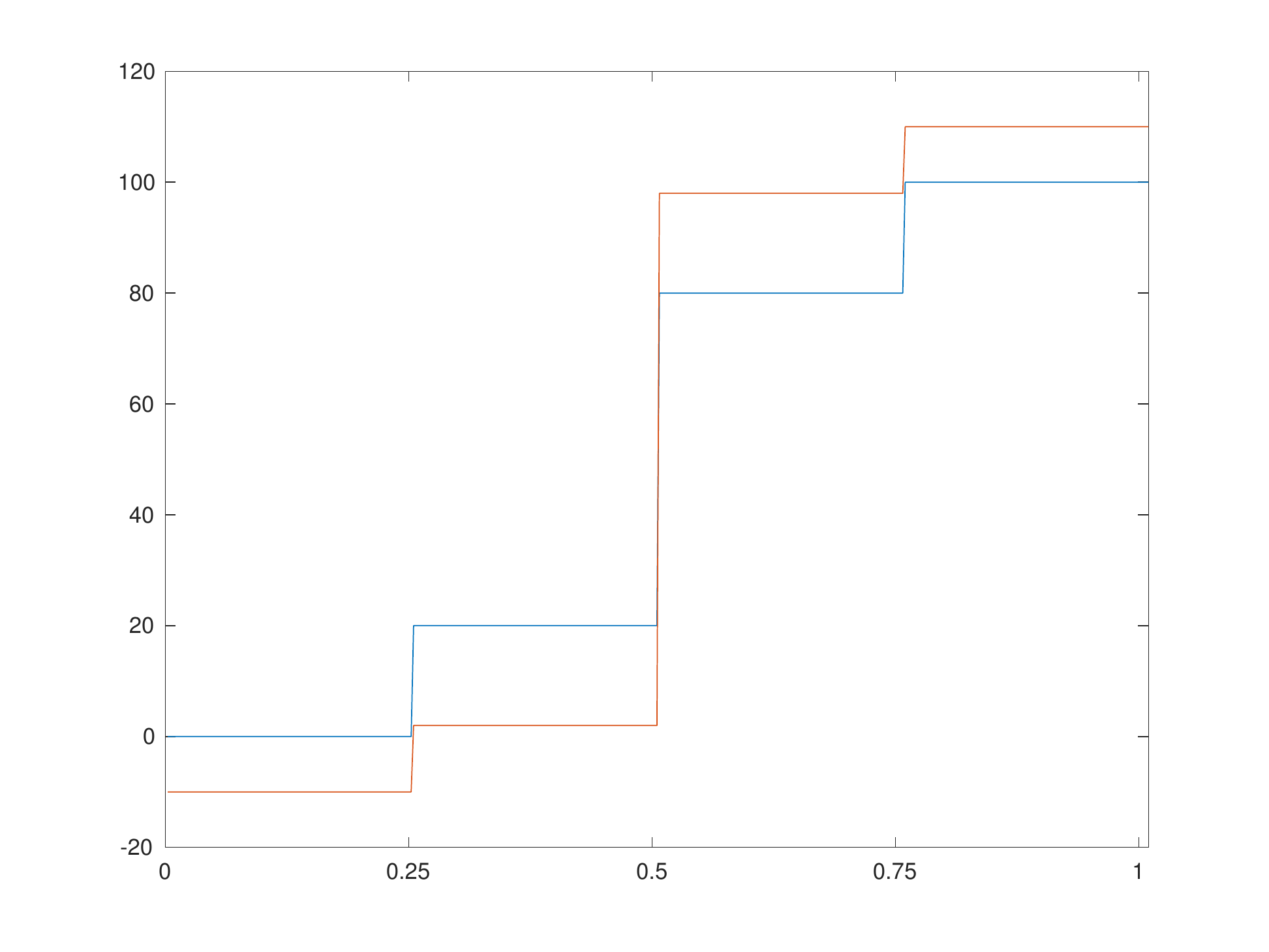}
        \caption{$u_c$ in \textcolor{blue}{blue} and $u_\eta$ in \textcolor{red}{red}}
        \label{fig:step_conter_start}
\end{subfigure}
\begin{subfigure}{.495\textwidth}
  \centering
        \includegraphics[width=1.0\linewidth]{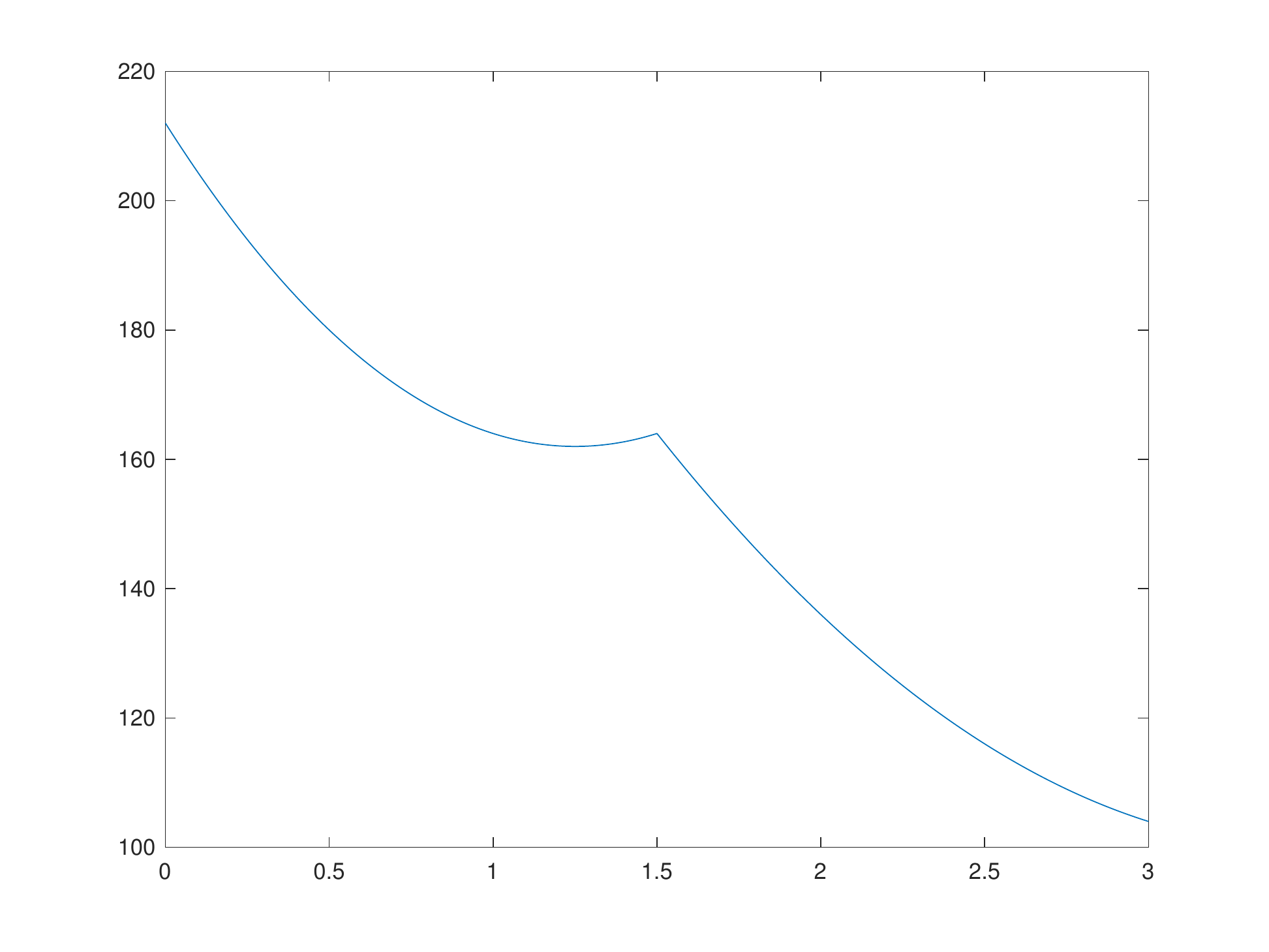}
        \caption{$\mathcal A(\alpha)$ is not quasi-convex}
        \label{fig:big_side_noise_low_resu_error}
\end{subfigure}
\caption{Figure \ref{fig:big_side_noise_low_resu_error} shows that $\mathcal A(\alpha)$ is not convex at $\alpha=1.5$.}
\label{fig:2nd_counter_exam}
\end{figure}
By \cite{strong1996exact} we can explicitly compute that  
\be
u_\alpha(x)=
\begin{cases}
-10+8\alpha&\text{ if }x\in (0,1/4)\\
2 & \text{ if }x\in (1/4,1/2)\\
98 &\text{ if }x\in(1/2,3/4)\\
110-8\alpha & \text{ if }x\in(3/4,1)
\end{cases},
\ee
for $0\leq \alpha\leq 3/2$, and consequently
\be
\mathcal A(\alpha)=\frac14\abs{8\alpha - 10}^2 + \frac14\abs{110-8\alpha-100}^2+18^2/2=\frac14\abs{8\alpha-10}^2+\frac14\abs{10-8\alpha}^2+18^2/2,
\ee
for $0\leq \alpha\leq 3/2$. Hence, we have 
\be
\mathcal A(\alpha)':=\frac d{d\alpha}\CC(\alpha)=8(8\alpha-10)\text{ and }\mathcal A(\alpha)''=64>0
\ee
for $0\leq \alpha\leq 3/2$. That is, $\mathcal A(\alpha)$ is convex and {decreasing} for $0\leq \alpha\leq 5/4$ and {increasing} for $5/4\leq \alpha\leq 3/2$ (see the first convex part in Figure \ref{fig:big_side_noise_low_resu_error}).\\\\
Next, by applying \cite{strong1996exact} again, we have, for $3/2\leq \alpha\leq 12$, that
\be
u_{\alpha}(x)=
\begin{cases}
2+4(\alpha-1.5)&\text{ if }x\in (0,1/2)\\
98-4(\alpha-1.5) & \text{ if }x\in(1/2,1).
\end{cases}
\ee
Hence, we have, for $3/2\leq \alpha\leq 12$,
\begin{align}
\mathcal A(\alpha)
&=\frac14\abs{2+4(\alpha-1.5)}^2+\frac14\abs{20-(2+4(\alpha-1.5))}^2 \\
&+ \frac14\abs{80-(98-4(\alpha-1.5))}^2 + \frac14\abs{100-(98-4(\alpha-1.5))}^2
\end{align}
which implies
\be
\mathcal A(\alpha)'=8\alpha-20\text{ and }\mathcal A(\alpha)''=8>0.
\ee
{Therefore, we see that $\mathcal A(\alpha)'<0$ for $5/2<\alpha<2$, i.e., $\mathcal A(\alpha)$ is again decreasing (this is the third convex part in Figure \ref{fig:big_side_noise_low_resu_error}), and hence $\mathcal A(\alpha)$ is not quasi-convex.
\subsection{A finite approximation of scheme $\mathcal T$}\label{thaos_sec_finite}
In Section \ref{thaos_sec_finite} we assume that $u_\eta\in BV(Q)$. We introduce first the concept of (finite) \emph{Training Ground}.
\begin{define}\label{finite_playground_l}
Let $l\in\N$ and recall the upper bound $\alpha_U\in\R^+$ from Proposition \ref{prop:last_word_go_upper}.
\begin{enumerate}[1.]
\item
By Theorem \ref{main_scheme_K_result}, we can reduce the training ground $\T$ to 
\be
\mathbb T:=[0,\alpha_U]\times[1,+\infty].
\ee
\item
We define $\delta_l:=1/l$ and we write
\be\label{finite_grid_alpha}
T_l[\alpha]:=\bigcup_{k=1}^l\tilde T_k[\alpha],
\ee
where
\be
\tilde T_k[\alpha]:=\flp{0,\,\,\delta_k,\,\,2\delta_k,\,\,\ldots,\,\,i\delta_k,\ldots, \alpha_U}.
\ee
Similarly, we denote by $T_l[p]$ that 
\be\label{finite_grid_p}
T_l[p]:=\bigcup_{k=1}^l{\tilde T_k[p]},
\ee
where
\be
\tilde T_k[p]:=\flp{+\infty}\cup\flp{1/\delta_k,\,\,1/(2\delta_k),\,\,\ldots,\,\,1/(i\delta_k),\,\,\ldots,1}.
\ee
\item
We define the \emph{Finite Training Ground} $\T_l$ at step $l\in\N$ by
\be
\T_l:=T_l[\alpha]\times T_l[p].
\ee
\item
For $i$, $j\in\N$ and $1\leq i,j\leq l$, we define the $(i,j)$-th \emph{finite grid} $\mathbb G_l(i,j)$ by 
\be\label{eq_finite_grid_one}
\mathbb G_l(i,j):=[i\delta_l,(i+1)\delta_l]\times[j(\delta_l^{-1}),(j+1)(\delta_l^{-1})].
\ee
\end{enumerate}
\end{define}
\begin{remark}\label{rmk_nested_tg_fg}
We draw the following observations from Definition \ref{finite_playground_l}.
\begin{enumerate}[1.]
\item
$\#\flp{\T_l}<+\infty$ for each $l\in\N$;
\item
$\Tl\subset \T_{l+1}\subset\cdots\subset \T$;
\item
We have
\be\label{nested_use_to}
\T\subset \operatorname{cl}\fsp{\bigcup_{l=1}^{+\infty}\T_l}.
\ee
\item
$\mathbb G_l(i,j)\subset \T\subset \operatorname{cl}(\R^2)$  is a closed set with positive $\mathcal L^2$ measure.
\end{enumerate}
\end{remark}
The training scheme $\mathcal T$ with finite training ground $\Tl$ can be presented as follows:
\begin{flalign}
\text{Level 1. }&\,\,\,\,\,\,\,\,\,\,\,\,\,\,\,\,\,\,\,\,( \alpha_\Tl, p_\Tl)\in\mathbb A[\Tl]:=\argmin\flp{\norm{u_{\alpha,p}-u_c}_{L^2(Q)}^2:\,\, (\alpha,p)\in \T_l},\tag{{$\mathcal T_l$-L1}}\label{scheme_B1_BV_p_finite}&\\
\text{Level 2. }&\,\,\,\,\,\,\,\,\,\,\,\,\,\,\,\,\,\,\,\,u_{\alpha,p}:=\argmin\flp{\norm{u-u_\eta}_{L^2(Q)}^2+\alpha {TV_p(u)}:\,\, u\in BV(Q)}\tag{{$\mathcal T_l$-L2}}\label{scheme_B2_BV_p_finite}.&
\end{flalign}
\begin{theorem}\label{article_result}
Let $u_c$ and $u_\eta\in BV(Q)$ be given such that \eqref{req_main_scheme_K_result} is satisfied, and let $\alpha_U$ be obtained from Proposition \ref{prop:last_word_go_upper}. Then the following assertions hold.
\begin{enumerate}[1.]
\item
As $l\to\infty$, we have that 
\be
\operatorname{dist}(\mathbb A[\T],\mathbb A[\Tl])\to 0.
\ee
\item
For each $l\in\N$, there holds
\be\label{intro_estimation_last}
\abs{\CC(\alpha_\Tl, p_\Tl)-\CC( \alpha_\T, p_\T)}\leq  \sqrt{\alpha_U}\fmp{{1/l}^{1/2}+2{\fsp{1-N^{-1/\sqrt l}}}^{1/2}}[TV_1(u_\eta)]^{1/2}.
\ee
\end{enumerate}
\end{theorem}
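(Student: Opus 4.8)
The plan is to prove the two assertions separately: Assertion 1 by a soft $\Gamma$-convergence argument, and Assertion 2 by a quantitative energy estimate resting on the strong convexity of the $L^2$-fidelity. For Assertion 1 I would first record that $\CC(\alpha,p)=\norm{u_{\alpha,p}-u_c}_{L^2(Q)}^2$ is continuous on the reduced training ground $\T=[0,\alpha_U]\times[1,+\infty]$: the minimiser $u_{\alpha,p}$ of \eqref{scheme_B2_BV_p_main} is unique by strict convexity of the fidelity, and Theorem \ref{thm:new-Gamma} together with the equicoercivity supplied by Proposition \ref{compact_semi_para} forces $u_{\alpha_n,p_n}\wtos u_{\alpha_0,p_0}$, hence $u_{\alpha_n,p_n}\to u_{\alpha_0,p_0}$ strongly in $L^1(Q)$ and in $L^2(Q)$, so that $\CC$ is continuous. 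Combining this with the nestedness and density $\T\subset\cl\fsp{\bigcup_{l}\Tl}$ of Remark \ref{rmk_nested_tg_fg}, I obtain the convergence of the optimal sets in the standard way: for a minimiser $(\alpha_\T,p_\T)$ pick $(\alpha_l,p_l)\in\Tl$ with $(\alpha_l,p_l)\to(\alpha_\T,p_\T)$, so optimality over $\Tl$ gives $\limsup_l\CC(\alpha_\Tl,p_\Tl)\le\lim_l\CC(\alpha_l,p_l)=m_\T$, while $\Tl\subset\T$ gives $\CC(\alpha_\Tl,p_\Tl)\ge m_\T$; compactness of $\T$ and continuity of $\CC$ then force every accumulation point of $\seqn{(\alpha_\Tl,p_\Tl)}$ into $\mathbb A[\T]$, which yields $\operatorname{dist}(\mathbb A[\T],\mathbb A[\Tl])\to0$.

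For Assertion 2, note that since $\Tl\subset\T$ we have $\CC(\alpha_\T,p_\T)=m_\T\le\CC(\alpha_\Tl,p_\Tl)$, so the absolute value is the nonnegative gap. The strategy is to produce a single grid point $(\alpha',p')\in\Tl$ close to the continuum optimiser $(\alpha_\T,p_\T)$ and exploit optimality of $(\alpha_\Tl,p_\Tl)$ over $\Tl$, which gives $\CC(\alpha_\Tl,p_\Tl)\le\CC(\alpha',p')$; it then suffices to estimate $\CC(\alpha',p')-\CC(\alpha_\T,p_\T)$. By the reverse triangle inequality this assessment gap is dominated by the reconstruction distance $\norm{u_{\alpha',p'}-u_{\alpha_\T,p_\T}}_{L^2(Q)}$, so the entire estimate reduces to a Lipschitz-type control of $(\alpha,p)\mapsto u_{\alpha,p}$. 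The a priori ingredient is that, testing the minimality of \eqref{scheme_B2_BV_p_main} against the competitor $u_\eta\in BV(Q)$ and using $TV_p(u_\eta)\le TV_1(u_\eta)$ with $\alpha\le\alpha_U$, every reconstruction obeys $\norm{u_{\alpha,p}-u_\eta}_{L^2(Q)}^2\le\alpha\,TV_p(u_\eta)\le\alpha_U\,TV_1(u_\eta)$; this is the source of the prefactor $\sqrt{\alpha_U}\,[TV_1(u_\eta)]^{1/2}$.

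I would obtain the reconstruction-distance bound by splitting into an $\alpha$-step at fixed $p$ and a $p$-step at fixed $\alpha$. In each step I add the two minimality inequalities for the two parameter values and use that the Level-2 energy $F(u)=\norm{u-u_\eta}_{L^2(Q)}^2+\alpha TV_p(u)$ satisfies $F(v)\ge F(u_{\alpha,p})+\norm{v-u_{\alpha,p}}_{L^2(Q)}^2$ at its minimiser, exactly the mechanism behind Lemma \ref{b_differential_aa}. The $\alpha$-step yields $2\norm{u_{\alpha_1,p}-u_{\alpha_2,p}}_{L^2(Q)}^2\le\abs{\alpha_1-\alpha_2}\,\abs{TV_p(u_{\alpha_1,p})-TV_p(u_{\alpha_2,p})}\le\abs{\alpha_1-\alpha_2}\,TV_1(u_\eta)$, so that choosing $\abs{\alpha'-\alpha_\T}\le\delta_l=1/l$ contributes the $(1/l)^{1/2}$ term. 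The $p$-step is analogous, except that the monotonicity of $f$ is replaced by the norm-equivalence \eqref{equivalent_semi_TV_p}: the cross terms are controlled by the equivalence constant $N^{1/(p_2)^{\ast}-1/(p_1)^{\ast}}$, and after choosing $p'$ according to the $p$-grid of Definition \ref{finite_playground_l} this produces the $(1-N^{-1/\sqrt l})^{1/2}$ term. Summing the two steps by the triangle inequality and inserting the a priori bound assembles the bracket $\fmp{(1/l)^{1/2}+2(1-N^{-1/\sqrt l})^{1/2}}$ multiplied by $\sqrt{\alpha_U}\,[TV_1(u_\eta)]^{1/2}$.

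I expect the main obstacle to be the $p$-step bookkeeping. One must pin down, from the explicit grid in Definition \ref{finite_playground_l}, the size of the admissible conjugate-exponent gap $1/(p_2)^{\ast}-1/(p_1)^{\ast}$ and convert the $TV_p$-equivalence constant into precisely the factor $1-N^{-1/\sqrt l}$ with the correct power of $l$, while simultaneously verifying that the reverse-triangle passage from the assessment gap to the reconstruction distance, together with the collection of the fidelity factors, assembles into the single prefactor $\sqrt{\alpha_U}\,[TV_1(u_\eta)]^{1/2}$ without spurious constants. The $\alpha$-step and the soft part are routine by contrast; the delicate point is entirely the control of the order $p$ through the norm-equivalence and the grid resolution.
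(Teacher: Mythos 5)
Your proposal is correct and reaches the stated estimate, but by a genuinely different route in the two places where the real work happens. For the Lipschitz-type control of $(\alpha,p)\mapsto u_{\alpha,p}$, the paper proves Proposition \ref{estimation_iso_like} by manipulating the subdifferential identity $u_{\alpha,p}-u_\eta=-\alpha\,\partial TV_p(u_{\alpha,p})$ (treated as single-valued) together with maximal monotonicity, and proves Proposition \ref{estimation_iso_aniso} by smoothing to $TV_p^\delta$, a first-variation computation, H\"older's inequality, and a $\delta\to0$ limit; you instead sum the two minimality inequalities and invoke the $2$-strong convexity of the $L^2$ fidelity, $F(v)\geq F(u_{\alpha,p})+\norm{v-u_{\alpha,p}}_{L^2(Q)}^2$, which delivers both the $\alpha$-step and the $p$-step with no subdifferentials and no regularization --- indeed your $p$-step is the same summing computation the paper uses in Proposition \ref{prop:last_word_go_upper}, upgraded by the quadratic term. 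This is more elementary and arguably more rigorous than the paper's formal use of $\partial TV_p$; the price is a slightly worse constant: your $p$-step gives $\tfrac{\alpha}{2}(N^{\delta}-1)\,TV_1(u_\eta)$ with $\delta=\abs{1/p_1-1/p_2}$, versus the paper's $2\alpha(1-N^{-\delta})\,TV_1(u_\eta)$, and since $N^{\delta}-1=N^{\delta}(1-N^{-\delta})$ your bound carries an extra factor $N^{\delta}/4\leq N^{1/l}/4$, harmless for images ($N=2$) and for large $l$, but strictly requiring $N^{1/l}\leq 8$ to land inside the right-hand side of \eqref{intro_estimation_last}. For converting the pointwise estimate into the value estimate, the paper runs an oscillation (max--min) argument over the grid cell $\mathbb G_l(i_l,j_l)$ containing $(\alpha_\T,p_\T)$, with a case distinction on whether $(\alpha_\Tl,p_\Tl)$ lies in that cell; your nearest-grid-point comparison $\CC(\alpha_\Tl,p_\Tl)\leq\CC(\alpha',p')$ is simpler and avoids the case analysis, while your Assertion 1 argument (continuity of $\CC$ via Theorem \ref{thm:new-Gamma} plus density of $\bigcup_l\Tl$) is essentially the paper's Proposition \ref{gammaT} in different clothing. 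Two caveats you inherit from the paper rather than introduce: the reverse-triangle step needs $\CC$ to be the unsquared $L^2$ distance (as in the paper's \eqref{two_step_estimation}), which conflicts with the squared definition \eqref{cost_map_intro}; and factoring $\sqrt{\alpha_U}$ out of the $(1/l)^{1/2}$ term tacitly assumes $\alpha_U\geq1$.
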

This theorem will be proved in several propositions.
\begin{proposition}\label{gammaT}
Recall $\Tl$ from Definition \ref{finite_playground_l} and the optimal set $\mathbb A[\Tl]$ defined by
\be
\mathbb A[\Tl]=\argmin\flp{\mathcal A(\alpha,p):\,\,(\alpha,p)\in \T_l},\text{ for each }l\in\N.
\ee
Then, all cluster points of sequence of sets $\seql{\mathbb A[\Tl]}$ belongs to the collection $\mathbb A[\T]$.
\end{proposition}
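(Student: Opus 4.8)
The plan is to reduce the whole statement to a single analytic fact, namely that the assessment function $\CC$ is continuous on the compact parameter space $\T=[0,\alpha_U]\times[1,+\infty]$; granted this, the proposition becomes a soft, $\Gamma$-convergence-style argument on the nested family of finite grids. First I would note that for each $(\alpha,p)$ the Level~2 functional $\mathcal I_{\alpha,p}$ is strictly convex, since the fidelity $\norm{u-u_\eta}_{L^2(Q)}^2$ is strictly convex while $TV_p$ is convex. Hence the minimiser $u_{\alpha,p}$ is unique and $\CC(\alpha,p)=\norm{u_{\alpha,p}-u_c}_{L^2(Q)}^2$ is genuinely a function, which makes the statements about $\mathbb A[\Tl]$ and $\mathbb A[\T]$ unambiguous.

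To prove continuity at a point with $\alpha_0>0$, take $(\alpha_n,p_n)\to(\alpha_0,p_0)$ in $\T$. By Theorem \ref{thm:new-Gamma} the functionals $\mathcal I_{\alpha_n,p_n}$ $\Gamma$-converge to $\mathcal I_{\alpha_0,p_0}$, and by Proposition \ref{compact_semi_para} the family is equi-coercive; the fundamental theorem of $\Gamma$-convergence, together with uniqueness of minimisers, then forces $u_{\alpha_n,p_n}\wtos u_{\alpha_0,p_0}$ in $BV(Q)$ (hence strongly in $L^1$ and, up to a subsequence, a.e.) as well as convergence of the minimum values $\mathcal I_{\alpha_n,p_n}(u_{\alpha_n,p_n})\to\mathcal I_{\alpha_0,p_0}(u_{\alpha_0,p_0})$. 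The delicate point, which I expect to be the main obstacle, is that $\CC$ records an $L^2$ distance while the compactness furnished by $BV$ only yields $L^1$ convergence. I would bridge this gap by splitting the limit of the energies: Fatou gives $\liminf\norm{u_{\alpha_n,p_n}-u_\eta}_{L^2(Q)}^2\geq\norm{u_{\alpha_0,p_0}-u_\eta}_{L^2(Q)}^2$, while Proposition \ref{compact_semi_para} gives $\liminf TV_{p_n}(u_{\alpha_n,p_n})\geq TV_{p_0}(u_{\alpha_0,p_0})$; since $\alpha_n\to\alpha_0>0$ and the weighted sum of the two terms converges, each term must in fact converge separately. Convergence of the $L^2$ norms together with the a.e.\ convergence then upgrades, via the Brezis--Lieb/Riesz argument, to strong $L^2$ convergence $u_{\alpha_n,p_n}\to u_{\alpha_0,p_0}$, whence $\CC(\alpha_n,p_n)\to\CC(\alpha_0,p_0)$.

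The boundary case $\alpha_0=0$ is handled separately, since Theorem \ref{thm:new-Gamma} is stated for $\alpha_0\in\R^+$: using the smooth-approximation estimate already employed in \eqref{goes_to_noisy2} (here $u_\eta\in BV(Q)$, so $TV_1(u_\eta)<+\infty$), one gets $u_{\alpha_n,p_n}\to u_\eta$ in $L^2$ whenever $\alpha_n\to0$, so that $\CC(\alpha_n,p_n)\to\norm{u_\eta-u_c}_{L^2(Q)}^2=\CC(0,p_0)$. Thus $\CC$ is continuous on all of $\T$. With continuity in hand, set $m_l:=\min_{\Tl}\CC$ and $m_\T:=\min_\T\CC$ (both attained, $\Tl$ being finite and $\T$ compact). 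Since $\Tl\subset\T$ we have $m_l\geq m_\T$, and since $\Tl\subset\T_{l+1}$ the sequence $m_l$ is non-increasing. For the matching upper bound I would pick a minimiser $(\alpha^*,p^*)\in\mathbb A[\T]$ (which exists by Theorem \ref{main_scheme_K_result}) and use Remark \ref{rmk_nested_tg_fg}, i.e.\ $\T\subset\cl\fsp{\bigcup_l\Tl}$ together with the nesting, to select points $(\alpha_l',p_l')\in\Tl$ with $(\alpha_l',p_l')\to(\alpha^*,p^*)$; continuity then gives $m_l\leq\CC(\alpha_l',p_l')\to\CC(\alpha^*,p^*)=m_\T$, so that $m_l\to m_\T$.

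Finally, let $(\bar\alpha,\bar p)$ be any cluster point of $\seql{\mathbb A[\Tl]}$, realised as a limit $(\alpha_{\T_{l_k}},p_{\T_{l_k}})\to(\bar\alpha,\bar p)$ along a subsequence with $(\alpha_{\T_{l_k}},p_{\T_{l_k}})\in\mathbb A[\T_{l_k}]$; compactness of $\T$ guarantees $(\bar\alpha,\bar p)\in\T$. By definition $\CC(\alpha_{\T_{l_k}},p_{\T_{l_k}})=m_{l_k}$, and letting $k\to\infty$, continuity of $\CC$ and $m_{l_k}\to m_\T$ yield $\CC(\bar\alpha,\bar p)=m_\T$. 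Hence $(\bar\alpha,\bar p)\in\mathbb A[\T]$, which is exactly the assertion.
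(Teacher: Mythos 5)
Your proof is correct, and it shares the paper's basic skeleton: everything rests on the nestedness $\Tl\subset\T_{l+1}$, the density property \eqref{nested_use_to}, and continuity of $(\alpha,p)\mapsto\CC(\alpha,p)$ extracted from Theorem \ref{thm:new-Gamma}. The execution, however, is genuinely different, and mostly to your advantage. The paper proves $m_l\to m$ by contradiction (monotone convergence of $m_l$ to some $\bar m$, then a $\tfrac14$--$\tfrac12$ squeeze), and at the decisive step it simply cites Theorem \ref{thm:new-Gamma} to get $\CC(\alpha_l,p_l)\to\CC(\alpha_\T,p_\T)$, leaving implicit precisely the point you isolate as delicate: $\Gamma$-convergence plus equicoercivity yields weak* $BV$ (hence strong $L^1$) convergence of the minimisers, while $\CC$ is an $L^2$ quantity, so one needs the energy-splitting argument (separate convergence of fidelity and regulariser when $\alpha_0>0$) and the Brezis--Lieb/Radon--Riesz upgrade to strong $L^2$ convergence that you carry out explicitly. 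You also treat the case $\alpha_0=0$ separately, which is genuinely needed here since the grids $T_l[\alpha]$ of Definition \ref{finite_playground_l} contain $\alpha=0$ and Theorem \ref{thm:new-Gamma} is stated only for $\alpha_0\in\R^+$; the paper does not comment on this. Finally, your direct squeeze $m_\T\leq m_l\leq \CC(\alpha_l',p_l')\to m_\T$ replaces the paper's contradiction, and your closing step (continuity of $\CC$ at a cluster point forces $\CC(\bar\alpha,\bar p)=m_\T$) is a clean substitute for the paper's rather opaque final remark that cluster points inherit \eqref{finite_grid_min_conv} ``due to the property of the monotone convergence theorem.'' What the paper's version buys is brevity; what yours buys is a self-contained argument that fills the $L^1$-to-$L^2$ gap and the boundary case the paper glosses over.
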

\begin{proof}
We claim first that 
\be\label{finite_grid_min_conv}
\min\flp{\mathcal A(\alpha,p):\,\,(\alpha,p)\in \T_l}:=m_l\to m:=\min\flp{\mathcal A(\alpha,p):\,\,(\alpha,p)\in\T}.
\ee
Not that, for each $l\in\N$ that $\Tl\subset \T_{l+1}$ by Remark \ref{rmk_nested_tg_fg}. Thus, we have
\be
m_l\geq m_{l+1}\geq \cdots\geq 0.
\ee
In view of Monotone Convergence Theorem, with no further subsequence extracted, there exists $\bar m\geq 0$ such that  
\be\label{monotone_down}
m_l\searrow \bar m.
\ee
We show that $\bar m=m$. Suppose $\bar m>m$. Then there exists $\bar l\in \N$ such that 
\be\label{eq_greater_ml}
 m_l>m+\frac12(\bar m-m)\text{, for all }l\geq\bar l.
\ee
On the other hand, by \eqref{nested_use_to}, for any $(\alpha_\T, p_\T)\in\mathbb A[\T]$ we can extract a sequence $\seql{(\alpha_l,p_l)}\subset\T$, in which $(\alpha_l,p_l)\in \T_l$ for each $l\in\N$, such that $(\alpha_l,p_l)\to(\alpha_\T, p_\T)$. Thus, by \eqref{finite_grid_min_conv} we have
\be\label{eq_greater_ml3}
\CC(\alpha_l,p_l)\geq m_l.
\ee
Next, by Theorem \ref{thm:new-Gamma} we have 
\bes
\CC(\alpha_l,p_l)\to\CC(\alpha_\T, p_\T)=m,
\ees
which implies that there exists $\tilde l\in\N$ such that 
\be\label{eq_greater_ml2}
\CC(\alpha_l,p_l)\leq m+\frac14(\bar m-m),\text{ for all }l\geq \tilde l.
\ee
Hence, by \eqref{eq_greater_ml}, \eqref{eq_greater_ml3}, and \eqref{eq_greater_ml2}, we must have
\be
m_l\leq m+\frac14(\bar m-m)<m+\frac12(\bar m-m)<m_l,
\ee
which is a contradiction.\\\\
To finish, we point out that all cluster points of the sequence $\seql{\mathbb A[\Tl]}$ satisfy \eqref{finite_grid_min_conv} since there is no subsequence extracted from \eqref{monotone_down} due to the property of the monotone convergence theorem, and hence we conclude this proposition.
\end{proof}
\begin{proposition}\label{estimation_iso_like}
Let $\alpha>0$, $\e>0$, $p\in[1,+\infty]$. Then we have 
\be
\norm{u_{\alpha+\e,p}-u_{\alpha,p}}_{L^2(Q)}^2\leq \e TV_p(u_{\alpha,p}).
\ee
\end{proposition}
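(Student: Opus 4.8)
The plan is to exploit the strong convexity of the $L^2$-fidelity term together with the convexity of $TV_p$, in the spirit of the computation carried out in the proof of Proposition \ref{prop:last_word_go}. Write $u_1:=u_{\alpha,p}$ and $u_2:=u_{\alpha+\e,p}$ for the two minimizers of \eqref{scheme_B2_BV_p_main}, and recall that each is characterized by its optimality condition, namely $-2(u_j-u_\eta)/\alpha_j\in\partial TV_p(u_j)$ with $\alpha_1=\alpha$ and $\alpha_2=\alpha+\e$.

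First I would establish a quadratic improvement inequality at each minimizer. Expanding the fidelity term as $\norm{v-u_\eta}_{L^2(Q)}^2=\norm{v-u_1}_{L^2(Q)}^2+2\fjp{v-u_1,u_1-u_\eta}_{L^2}+\norm{u_1-u_\eta}_{L^2(Q)}^2$ and combining with the subgradient inequality for $TV_p$ at $u_1$ (using the optimality condition to supply the subgradient $-2(u_1-u_\eta)/\alpha$) yields, after the linear terms cancel,
\be
\mathcal I_{\alpha,p}(v)-\mathcal I_{\alpha,p}(u_1)\geq \norm{v-u_1}_{L^2(Q)}^2\quad\text{for every }v\in BV(Q).
\ee
The same argument applied to $\mathcal I_{\alpha+\e,p}$ gives $\mathcal I_{\alpha+\e,p}(v)-\mathcal I_{\alpha+\e,p}(u_2)\geq\norm{v-u_2}_{L^2(Q)}^2$.

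Next I would specialize the first inequality to $v=u_2$ and the second to $v=u_1$, and add them. The two fidelity terms cancel exactly, and the regularization terms collapse to $\e\fsp{TV_p(u_1)-TV_p(u_2)}$, leaving
\be
\e\fsp{TV_p(u_{\alpha,p})-TV_p(u_{\alpha+\e,p})}\geq 2\norm{u_{\alpha+\e,p}-u_{\alpha,p}}_{L^2(Q)}^2.
\ee
Since $TV_p(u_{\alpha+\e,p})\geq 0$, discarding it only weakens the right-hand side, and I conclude $\norm{u_{\alpha+\e,p}-u_{\alpha,p}}_{L^2(Q)}^2\leq\tfrac{\e}{2}TV_p(u_{\alpha,p})\leq\e\,TV_p(u_{\alpha,p})$, which is the claimed estimate (in fact with an extra factor $1/2$ to spare).

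The only delicate point—the \emph{hard part}—is the rigorous use of the subdifferential of $TV_p$ on $L^2(Q)$ and the validity of the optimality condition that supplies the subgradient; but this is exactly the machinery already invoked in Proposition \ref{prop:last_word_go} via \cite[Proposition 5.4, page 24]{MR1727362}, so it is available here. (An equivalent route avoids the improvement inequality entirely: subtract the two Euler--Lagrange relations $2(u_j-u_\eta)=-\alpha_j\xi_j$ with $\xi_j\in\partial TV_p(u_j)$, pair with $u_1-u_2$, and use monotonicity of $\partial TV_p$ to kill the cross term; this lands on the same inequality.) Everything else is routine convex-analysis bookkeeping together with the cancellation of the linear and fidelity terms.
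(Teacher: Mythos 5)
Your proof is correct, and its main route is organized differently from the paper's. The paper argues exactly as in your final parenthesis: it subtracts the two Euler--Lagrange inclusions $u_{\alpha,p}-u_\eta=-\alpha\,\partial TV_p(u_{\alpha,p})$ and $u_{\alpha+\e,p}-u_\eta=-(\alpha+\e)\,\partial TV_p(u_{\alpha+\e,p})$, pairs the result with $u_{\alpha,p}-u_{\alpha+\e,p}$, discards the $\alpha$-weighted cross term by maximal monotonicity of $\partial TV_p$ (citing \cite[Proposition 5.5, page 25]{MR1727362}), and bounds the remaining term $\e\fjp{\partial TV_p(u_{\alpha+\e,p}),u_{\alpha,p}-u_{\alpha+\e,p}}_{L^2}$ by $\e\fmp{TV_p(u_{\alpha,p})-TV_p(u_{\alpha+\e,p})}$ via the subgradient inequality. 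Your primary argument instead establishes the quadratic growth estimate $\mathcal I_{\alpha,p}(v)-\mathcal I_{\alpha,p}(u_{\alpha,p})\geq\norm{v-u_{\alpha,p}}_{L^2(Q)}^2$ at each of the two minimizers and then symmetrizes; the monotonicity of $\partial TV_p$ is implicit there (adding two subgradient inequalities is precisely how one proves monotonicity), so you need no separate maximal-monotonicity citation and you avoid the paper's formally single-valued manipulation of the set-valued operator $\partial TV_p$. Your route also buys a sharper constant, $\e/2$ in place of $\e$; this is no accident, since the paper's optimality conditions omit the factor $2$ from differentiating $\norm{u-u_\eta}_{L^2(Q)}^2$, and restoring it in the paper's computation yields the same $\e/2$. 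Finally, the one point you flag as delicate (rigorous use of $\partial TV_p$) can be removed entirely from your argument: the quadratic growth inequality follows from minimality and convexity alone, since for $t\in(0,1]$ and $v_t:=(1-t)u_{\alpha,p}+tv$ one has
\be
\norm{v_t-u_\eta}_{L^2(Q)}^2=(1-t)\norm{u_{\alpha,p}-u_\eta}_{L^2(Q)}^2+t\norm{v-u_\eta}_{L^2(Q)}^2-t(1-t)\norm{v-u_{\alpha,p}}_{L^2(Q)}^2,
\ee
which combined with convexity of $TV_p$ and $\mathcal I_{\alpha,p}(u_{\alpha,p})\leq\mathcal I_{\alpha,p}(v_t)$ gives $\mathcal I_{\alpha,p}(v)-\mathcal I_{\alpha,p}(u_{\alpha,p})\geq(1-t)\norm{v-u_{\alpha,p}}_{L^2(Q)}^2$, and one lets $t\downarrow0$. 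This makes your version of the proposition both slightly stronger and, if desired, entirely free of subdifferential machinery.
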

\begin{proof}
By the minimality of $u_{\alpha,p}$ there holds
\be
u_{\alpha,p}-u_\eta = -\alpha \partial TV_p(u_{\alpha,p})
\ee
and
\be
u_{\alpha+\e,p}-u_\eta = -(\alpha+\e) \partial TV_p(u_{\alpha+\e,p}).
\ee
Subtracting one from another, we have that
\be
u_{\alpha,p}-u_{\alpha+\e,p}+\alpha\fmp{\partial TV_p(u_{\alpha,p})-\partial TV_p(u_{\alpha+\e,p})}-\e \partial TV_p(u_{\alpha+\e,p})=0.
\ee
Next, by multiplying with $u_{\alpha,p}-u_{\alpha+\e,p}$ and integrating over $Q$, we deduce that 
\begin{align}\label{est_iso_eq}
\norm{u_{\alpha+\e,p}-u_{\alpha,p}}_{L^2(Q)}^2&+\alpha\fjp{u_{\alpha,p}-u_{\alpha+\e,p},\partial TV_p(u_{\alpha,p})-\partial TV_p(u_{\alpha+\e,p})}_{L^2}\\
&-\e\fjp{\partial TV_p(u_{\alpha+\e,p}),u_{\alpha,p}-u_{\alpha+\e,p}}_{L^2}=0.
\end{align}
Since $\partial TV_p$ is a maximal monotone operator (see \cite[Proposition 5.5, page 25]{MR1727362}), we obtain that
\be
\fjp{u_{\alpha,p}-u_{\alpha+\e,p},\partial TV_p(u_{\alpha,p})-\partial TV_p(u_{\alpha+\e,p})}_{L^2}\geq 0.
\ee
This, and together with \eqref{est_iso_eq}, we have that 
\begin{align}
&\norm{u_{\alpha+\e,p}-u_{\alpha,p}}_{L^2(Q)}^2\\
&\leq \e\fjp{\partial TV_p(u_{\alpha+\e,p}),u_{\alpha,p}-u_{\alpha+\e,p}}_{L^2}\leq \e\fmp{TV(u_{\alpha,p})-TV(u_{\alpha+\e,p})},
\end{align}
where at the last inequality we used the property of sub-gradient (see \cite[Proposition 5.4, page 24]{MR1727362}). In turn, we obtain that 
\be
\norm{u_{\alpha+\e,p}-u_{\alpha,p}}^2_{L^2(Q)}\leq \e TV(u_{\alpha,p}),
\ee
which concludes the proof.
\end{proof}
%
%
%
%
\begin{proposition}\label{estimation_iso_aniso}
Let $p\geq1$, $\e>0$, $\alpha\in\R^+$. Then we have 
\be
\norm{u_{\alpha,p+\e}-u_{\alpha,p}}^2_{L^2(Q)}\leq \alpha\fmp{1-N^{1/(p+\e)-1/p}} \fmp{TV_{p+\e}(u_{\alpha,p})+TV_{p+\e}(u_{\alpha,p+\e})}.
\ee
\end{proposition}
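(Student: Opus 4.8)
The plan is to adapt the variational argument of Proposition \ref{estimation_iso_like}, the essential new feature being that the two reconstructions here solve Euler--Lagrange equations governed by \emph{different} functionals, $TV_p$ and $TV_{p+\e}$, rather than the single $TV_p$ with two different weights. Consequently the monotonicity cancellation used in Proposition \ref{estimation_iso_like} (which relied on comparing one and the same maximal monotone operator $\partial TV_p$ at two points) is no longer directly available, and the gap between the two seminorms must instead be absorbed quantitatively through the equivalence of Remark \ref{equivalent_semi}.

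First I would record the optimality conditions for the two minimisers, which share the same fidelity term and the same weight $\alpha$ but differ in order: $u_{\alpha,p}-u_\eta=-\alpha\,\partial TV_p(u_{\alpha,p})$ and $u_{\alpha,p+\e}-u_\eta=-\alpha\,\partial TV_{p+\e}(u_{\alpha,p+\e})$. Subtracting these two identities gives $u_{\alpha,p}-u_{\alpha,p+\e}=\alpha\fmp{\partial TV_{p+\e}(u_{\alpha,p+\e})-\partial TV_p(u_{\alpha,p})}$. I would then pair this in $L^2(Q)$ with $u_{\alpha,p}-u_{\alpha,p+\e}$ and apply the subgradient inequality \cite[Proposition 5.4, page 24]{MR1727362} to \emph{each} of the two subdifferential terms separately, each time testing at the other minimiser, to arrive at
\be
\norm{u_{\alpha,p}-u_{\alpha,p+\e}}_{L^2(Q)}^2\leq \alpha\fmp{TV_{p+\e}(u_{\alpha,p})-TV_{p+\e}(u_{\alpha,p+\e})}+\alpha\fmp{TV_p(u_{\alpha,p+\e})-TV_p(u_{\alpha,p})}.
\ee

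Next I would regroup the right-hand side into the two seminorm differences $\fmp{TV_p(u_{\alpha,p+\e})-TV_{p+\e}(u_{\alpha,p+\e})}$ and $\fmp{TV_{p+\e}(u_{\alpha,p})-TV_p(u_{\alpha,p})}$, each taken at a fixed function with $p<p+\e$. Since $TV_{p+\e}\leq TV_p$, the first bracket is nonnegative while the second is nonpositive; using $TV_{p+\e}(u_{\alpha,p})-TV_p(u_{\alpha,p})\leq TV_p(u_{\alpha,p})-TV_{p+\e}(u_{\alpha,p})$ I would replace the nonpositive term by its (larger) nonnegative counterpart, so that both brackets become nonnegative differences of the form $TV_p(v)-TV_{p+\e}(v)$. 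Finally I would invoke the quantitative equivalence \eqref{equivalent_semi_TV_p}: from $N^{-(1/p-1/(p+\e))}TV_p(v)\leq TV_{p+\e}(v)\leq TV_p(v)$ each such difference is controlled by the factor $\fsp{1-N^{1/(p+\e)-1/p}}$ times the relevant $TV_{p+\e}$ seminorm, and summing the two resulting estimates produces the asserted bound.

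The hard part will not be any single delicate estimate but the bookkeeping in the last two steps: because the two subdifferentials belong to distinct functionals one cannot discard a term by monotonicity as in Proposition \ref{estimation_iso_like}, and one must route each seminorm difference through the correct direction of \eqref{equivalent_semi_TV_p} so that the \emph{small} factor $1-N^{1/(p+\e)-1/p}$ is the one that emerges. This is the quantity that vanishes as $\e\to0$, which is exactly what guarantees that the right-hand side degenerates as $p+\e\to p$ and keeps the estimate consistent with the continuity of $p\mapsto u_{\alpha,p}$ underlying Theorem \ref{thm:new-Gamma}.
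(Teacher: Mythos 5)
Your strategy --- subtract the two optimality conditions, pair with $u_{\alpha,p}-u_{\alpha,p+\e}$ in $L^2(Q)$, and apply the subgradient inequality once for each of the two functionals --- is genuinely different from the paper's proof, which instead regularizes the seminorm to $TV_p^\delta$ with integrand $\sqrt{\abs{\nabla u}_{\ell^p}^2+\delta}$, subtracts the Euler--Lagrange equations of the regularized problems, tests with the difference, estimates pointwise under the integral, and finally sends $\delta\to0$ using strict convergence. Your route is more elementary, and it is correct up to and including your displayed inequality
\be
\norm{u_{\alpha,p}-u_{\alpha,p+\e}}_{L^2(Q)}^2\leq \alpha\fmp{TV_{p+\e}(u_{\alpha,p})-TV_{p+\e}(u_{\alpha,p+\e})}+\alpha\fmp{TV_p(u_{\alpha,p+\e})-TV_p(u_{\alpha,p})},
\ee
and the subsequent regrouping (including replacing the nonpositive bracket by its nonnegative counterpart) is also legitimate.

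The genuine gap is in your final step. Writing $t:=N^{1/p-1/(p+\e)}\geq 1$, the equivalence \eqref{equivalent_semi_TV_p} gives, for a fixed $v\in BV(Q)$,
\be
TV_p(v)-TV_{p+\e}(v)\leq \fsp{1-t^{-1}}TV_p(v)
\quad\text{ and }\quad
TV_p(v)-TV_{p+\e}(v)\leq \fsp{t-1}TV_{p+\e}(v),
\ee
but \emph{not} the hybrid you invoke, namely $TV_p(v)-TV_{p+\e}(v)\leq (1-t^{-1})TV_{p+\e}(v)$. That hybrid is equivalent to $TV_p(v)\leq (2-t^{-1})TV_{p+\e}(v)$, and since $2-t^{-1}<t$ whenever $t>1$, it does not follow from $TV_p(v)\leq t\,TV_{p+\e}(v)$; indeed it fails in the extremal case, e.g.\ for $N=2$ and $v(x)=x_1+x_2$ one has $TV_p(v)=t\,TV_{p+\e}(v)$ exactly, and then the hybrid forces $t+t^{-1}\leq 2$, i.e.\ $t=1$. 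Consequently what your bookkeeping actually proves is
\be
\norm{u_{\alpha,p+\e}-u_{\alpha,p}}^2_{L^2(Q)}\leq \alpha\fsp{N^{1/p-1/(p+\e)}-1}\fmp{TV_{p+\e}(u_{\alpha,p})+TV_{p+\e}(u_{\alpha,p+\e})},
\ee
i.e.\ the stated bound weakened by the factor $N^{1/p-1/(p+\e)}\leq N$ (equivalently, the stated small factor but with $TV_p$ in place of $TV_{p+\e}$ on the right-hand side). This weaker estimate vanishes at the same rate as $\e\to0$ and would suffice, with adjusted constants, for Corollary \ref{cor_ready_for_estimation} and Theorem \ref{article_result}, but it is not the proposition as stated. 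The reason the paper can keep the factor $1-N^{1/(p+\e)-1/p}$ paired with the $TV_{p+\e}$ seminorms is that it never reduces matters to global seminorm differences of the two distinct minimizers: in its cross term the \emph{same} field $\nabla u_{p+\e}^\delta$ appears under the two normalizations $(\abs{\cdot}_{\ell^p}^2+\delta)^{1/2}$ and $(\abs{\cdot}_{\ell^{p+\e}}^2+\delta)^{1/2}$, so the small factor is extracted pointwise under the integral before any seminorm appears --- a step your global subgradient argument cannot reproduce.
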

\begin{proof}
Instead of using the sub-gradient operator as used in Proposition \ref{estimation_iso_like}, we proceed with the first variation of \eqref{scheme_B2_BV_p_finite}. Since in this argument $\alpha\in\R^+$ is fixed, we abbreviate $u_{\alpha,p}$ by $u_p$.\\
As first suggested in \cite{rudin1992nonlinear}, we can regularize the $TV_p$ seminorm by a factor $\delta>0$ and consider
\be\label{updelta_minarg}
u_p^\delta:=\argmin\flp{\norm{u-u_\eta}_{L^2(Q)}^2+\alpha TV_p^\delta(u):\,\, u\in BV(Q)}
\ee
where
\be
TV_p^\delta(u):=\int_Q\sqrt{\abs{\nabla u}^2_{\ell^p}+\delta}\,dx.
\ee
The new $TV_p^\delta$ seminorm is differentiable in $u$. For arbitrary $v\in BV(Q)$, we write the first variation of \eqref{updelta_minarg} as follows
\be
\int_Q\fsp{u_{p+\e}^\delta-u_\eta}v\,dx +\alpha \int_Q \frac{\nabla u_{p+\e}^\delta}{\sqrt{\abs{\nabla u_{p+\e}^\delta}^2_{\ell^{p+\e}}+\delta}}\nabla v\,dx=0
\ee
and
\be
\int_Q\fsp{u_{p}^\delta-u_\eta}v\,dx + \alpha\int_Q \frac{\nabla u_{p}^\delta}{\sqrt{\abs{\nabla u_{p}^\delta}^2_{\ell^p}+\delta}}\nabla v\,dx=0.
\ee
Subtracting one from another, we have that 
\begin{align}\label{p_var_est_1}
\frac1\alpha\int_Q \fsp{u_{p+\e}^\delta-u_{p}^\delta}v\,dx&=\int_Q\fmp{ \frac{\nabla u_{p}^\delta}{\sqrt{\abs{\nabla u_{p}^\delta}^2_{\ell^p}+\delta}}- \frac{\nabla u_{p+\e}^\delta}{\sqrt{\abs{\nabla u_{p+\e}^\delta}^2_{\ell^{p+\e}}+\delta}}}\nabla v\,dx\\
&=\int_Q\fmp{ \frac{\nabla u_{p}^\delta}{\sqrt{\abs{\nabla u_{p}^\delta}^2_{\ell^p}+\delta}}- \frac{\nabla u_{p+\e}^\delta}{\sqrt{\abs{\nabla u_{p+\e}^\delta}^2_{\ell^p}+\delta}}}\nabla v\,dx\\
&\,\,\,\,\,\,\,\,\,+\int_Q\fmp{ \frac{\nabla u_{p+\e}^\delta}{\sqrt{\abs{\nabla u_{p+\e}^\delta}^2_{\ell^p}+\delta}}- \frac{\nabla u_{p+\e}^\delta}{\sqrt{\abs{\nabla u_{p+\e}^\delta}^2_{\ell^{p+\e}}+\delta}}}\nabla v\,dx.
\end{align}
Set $v:=u_{p+\e}^\delta-u_{p}^\delta$. We compute that
\begin{align}\label{p_var_est_2}
&\int_Q\fmp{ \frac{\nabla u_{p}^\delta}{\sqrt{\abs{\nabla u_{p}^\delta}^2_{\ell^p}+\delta}}- \frac{\nabla u_{p+\e}^\delta}{\sqrt{\abs{\nabla u_{p+\e}^\delta}^2_{\ell^p}+\delta}}}\nabla (u_{p+\e}^\delta-u_{p}^\delta)\,dx\\
& = \int_Q\fmp{ -\divg\frac{\nabla u_{p}^\delta}{\sqrt{\abs{\nabla u_{p}^\delta}^2_{\ell^p}+\delta}}+\divg \frac{\nabla u_{p+\e}^\delta}{\sqrt{\abs{\nabla u_{p+\e}^\delta}^2_{\ell^p}+\delta}}} (u_{p+\e}^\delta-u_{p}^\delta)\,dx\\
&\leq 0,
\end{align}
where at the last inequality we used the fact that 
\be
 -\divg\frac{\nabla u}{\sqrt{\abs{\nabla u}^2_{\ell^p}+\delta}}\in\partial TV^\delta_p(u)
\ee
is a maximal monotone operator.\\
Next, we compute that 
\begin{align}\label{p_var_est_add1}
&\int_Q\fmp{ \frac{\nabla u_{p+\e}^\delta}{\sqrt{\abs{\nabla u_{p+\e}^\delta}^2_{\ell^p}+\delta}}- \frac{\nabla u_{p+\e}^\delta}{\sqrt{\abs{\nabla u_{p+\e}^\delta}^2_{\ell^{p+\e}}+\delta}}}\nabla u_{p+\e}^\delta \,dx\\
&=\int_Q\fmp{ \frac{1}{\sqrt{\abs{\nabla u_{p+\e}^\delta}^2_{\ell^p}+\delta}}- \frac{1}{\sqrt{\abs{\nabla u_{p+\e}^\delta}^2_{\ell^{p+\e}}+\delta}}}\fsp{\nabla u_{p+\e}^\delta\cdot\nabla u_{p+\e}^\delta} dx\\
&\leq \int_Q \abs{\frac{1}{\sqrt{\abs{\nabla u_{p+\e}^\delta}^2_{\ell^p}+\delta}}-\frac{1}{\sqrt{\abs{\nabla u_{p+\e}^\delta}^2_{\ell^{p+\e}}+\delta}}}\abs{\nabla u_{p+\e}^\delta\cdot\nabla u_{p+\e}^\delta} \,dx\\
&\leq  \fmp{1-\frac{1}{N^{1/p-1/{p+\e}}}}\int_Q \abs{\frac{1}{\sqrt{\abs{\nabla u_{p+\e}^\delta}^2_{\ell^p}+\delta}}}\abs{\nabla u_{p+\e}^\delta}_{\ell^{p+\e}}\abs{\nabla u_{p+\e}^\delta}^\ast_{\ell^{p+\e}} \,dx\\
&\leq \fmp{1-\frac{1}{N^{1/p-1/{p+\e}}}} TV_{p+\e}(u_{p+\e}^\delta),
\end{align}
where at the second inequality we used \eqref{equivalent_p_norm} and H\"older inequality. We could similarly estimate that 
\begin{align}
&\int_Q \fmp{\frac{\nabla u_{p+\e}^\delta}{\sqrt{\abs{\nabla u_{p+\e}^\delta}^2_{\ell^p}+\delta}}-\frac{\nabla u_{p+\e}^\delta}{\sqrt{\abs{\nabla u_{p+\e}^\delta}^2_{\ell^{p+\e}}+\delta}}}(-\nabla u_{p}^\delta) \,dx\\
&\leq \fmp{1-\frac{1}{N^{1/p-1/{p+\e}}}}\int_Q \abs{\frac{1}{\sqrt{\abs{\nabla u_{p+\e}^\delta}^2_{\ell^{p+\e}}+\delta}}}\abs{\nabla u_{p+\e}^\delta}_{\ell^{p+\e}}\abs{\nabla u_{p}^\delta}_{l^\ast_{p+\e}} \,dx\\
&\leq \fmp{1-\frac{1}{N^{1/p-1/{p+\e}}}} TV_{p+\e}(u_{p}^\delta).
\end{align}
This, and together with \eqref{p_var_est_add1}, we conclude that 
\begin{align}\label{p_var_est_3}
&\int_Q\fmp{\frac{\nabla u_{p+\e}^\delta}{\sqrt{\abs{\nabla u_{p+\e}^\delta}^2_{\ell^p}+\delta}}-\frac{\nabla u_{p+\e}^\delta}{\sqrt{\abs{\nabla u_{p+\e}^\delta}^2_{\ell^{p+\e}}+\delta}}}\nabla (u_{p+\e}^\delta-u_{p}^\delta)\,dx\\
&\leq  \fmp{1-N^{1/(p+\e)-1/p}} \fmp{TV_{p+\e}(u_{p}^\delta)+TV_{p+\e}(u_{p+\e}^\delta)}.
\end{align}
Hence, by \eqref{p_var_est_1}, \eqref{p_var_est_2}, and \eqref{p_var_est_3}, we have
\be
\norm{u_p^\delta-u^\delta_{p+\e}}_{L^2(Q)}^2\leq \alpha\fmp{1-N^{1/(p+\e)-1/p}} \fmp{TV_{p+\e}(u^\delta_p)+TV_{p+\e}(u^\delta_{p+\e})}.
\ee
Moreover, since $u_p^\delta\to u_p$ in the strict topology of $BV$ (see \cite{rudin1992nonlinear}), it follows that
\begin{align}
\norm{u_p-u_{p+\e}}_{L^2(Q)}^2&=\lim_{\delta\to 0}\norm{u_p^\delta-u^\delta_{p+\e}}_{L^2(Q)}^2\\
&\leq \alpha\fmp{1-N^{1/(p+\e)-1/p}}\limsup_{\delta\to 0} \fmp{TV_{p+\e}(u^\delta_p)+TV_{p+\e}(u^\delta_{p+\e})}\\
&=  \alpha\fmp{1-N^{1/(p+\e)-1/p}} \fmp{TV_{p+\e}(u_p)+TV_{p+\e}(u_{p+\e})},
\end{align}
which completes the proof of the proposition.
\end{proof}
We recall the reduced training ground $\T$ from Definition \ref{finite_playground_l}.
\begin{corollary}\label{cor_ready_for_estimation}
Let $(\alpha_1,p_1)$, $(\alpha_2,p_2)\in\mathbb T$. Then we have
\be
\norm{u_{\alpha_2,p_2}-u_{\alpha_1,p_1}}_{L^2(Q)}\leq \fmp{\abs{\alpha_1-\alpha_2}^{1/2}+2\fsp{\alpha_U\fsp{1-N^{-\abs{1/p_2-1/p_1}}}}^{1/2}}\fmp{TV_1(u_\eta)}^{1/2}.
\ee
\end{corollary}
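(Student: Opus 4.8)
The plan is to reduce this two-parameter estimate to the two one-parameter estimates already in hand, namely Proposition \ref{estimation_iso_like} (varying $\alpha$ at fixed $p$) and Proposition \ref{estimation_iso_aniso} (varying $p$ at fixed $\alpha$), and then glue them with the triangle inequality. Without loss of generality I assume $\alpha_1\leq\alpha_2$ and $p_1\leq p_2$; the remaining cases follow by relabelling, since the claimed bound is symmetric in the two pairs through the absolute values. Inserting the intermediate reconstruction $u_{\alpha_1,p_2}$ I would write
\begin{equation}
\norm{u_{\alpha_2,p_2}-u_{\alpha_1,p_1}}_{L^2(Q)} \leq \norm{u_{\alpha_2,p_2}-u_{\alpha_1,p_2}}_{L^2(Q)} + \norm{u_{\alpha_1,p_2}-u_{\alpha_1,p_1}}_{L^2(Q)},
\end{equation}
so the first summand involves only a change in $\alpha$ (at fixed $p=p_2$) and the second only a change in $p$ (at fixed $\alpha=\alpha_1$).

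The key auxiliary fact feeding both estimates is the a priori bound $TV_p(u_{\alpha,p})\leq TV_p(u_\eta)\leq TV_1(u_\eta)$, valid for every $(\alpha,p)\in\T$. The first inequality comes from testing the minimality of $u_{\alpha,p}$ in \eqref{scheme_B2_BV_p_finite} against the competitor $u_\eta$ (here I use that $u_\eta\in BV(Q)$ throughout Section \ref{thaos_sec_finite}), and the second from the monotonicity of $p\mapsto TV_p$ recorded in Remark \ref{equivalent_semi}. For the first summand I would apply Proposition \ref{estimation_iso_like} with increment $\e=\alpha_2-\alpha_1$ together with this bound to get
\begin{equation}
\norm{u_{\alpha_2,p_2}-u_{\alpha_1,p_2}}_{L^2(Q)}^2 \leq \abs{\alpha_1-\alpha_2}\,TV_{p_2}(u_{\alpha_1,p_2}) \leq \abs{\alpha_1-\alpha_2}\,TV_1(u_\eta).
\end{equation}

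For the second summand I would apply Proposition \ref{estimation_iso_aniso} with $p=p_1$ and increment $\e=p_2-p_1$, bounding each of $TV_{p_2}(u_{\alpha_1,p_1})$ and $TV_{p_2}(u_{\alpha_1,p_2})$ by $TV_1(u_\eta)$ (again via minimality and Remark \ref{equivalent_semi}), and estimating $\alpha_1\leq\alpha_U$ since $(\alpha_1,p_1)\in\T=[0,\alpha_U]\times[1,+\infty]$. Because $p_1\leq p_2$ gives $1/(p+\e)-1/p=1/p_2-1/p_1=-\abs{1/p_2-1/p_1}$, this yields
\begin{equation}
\norm{u_{\alpha_1,p_2}-u_{\alpha_1,p_1}}_{L^2(Q)}^2 \leq 2\alpha_U\fsp{1-N^{-\abs{1/p_2-1/p_1}}}TV_1(u_\eta).
\end{equation}
Taking square roots in the two displays, using $\sqrt2\leq2$ to absorb the constant in the $p$-term, and substituting into the triangle inequality then factors out $(TV_1(u_\eta))^{1/2}$ and produces exactly the asserted bound.

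The main point to watch is the sign and direction of the norm-equivalence inequalities of Remark \ref{equivalent_semi}: one must invoke that $TV_p$ is \emph{decreasing} in $p$ in precisely the right places, both to pass from $TV_{p_2}$ or $TV_{p_1}$ to $TV_1$ of the same function, and to identify the exponent as $-\abs{1/p_2-1/p_1}$ with the correct sign so that $1-N^{-\abs{1/p_2-1/p_1}}\geq0$. A minor loose end is the endpoint $p_2=+\infty$, where Proposition \ref{estimation_iso_aniso} is stated for finite increments; this case is recovered by the continuity in $p$ of $u_{\alpha,p}$ furnished by Theorem \ref{thm:new-Gamma}, letting $p_2\to+\infty$ in the finite-$p_2$ estimate.
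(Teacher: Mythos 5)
Your proposal is correct and follows essentially the same route as the paper's own proof: the triangle inequality through the intermediate reconstruction $u_{\alpha_1,p_2}$, Propositions \ref{estimation_iso_like} and \ref{estimation_iso_aniso} for the two summands, the bound $TV_p(u_{\alpha,p})\leq TV_p(u_\eta)\leq TV_1(u_\eta)$, and $\alpha_1\leq\alpha_U$. Your added care about the sign of the exponent and the endpoint $p_2=+\infty$ only makes explicit details the paper leaves implicit.
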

\begin{proof}
By Proposition \ref{estimation_iso_like} and Proposition \ref{estimation_iso_aniso}, we observe that
\begin{align}
&\norm{u_{\alpha_2,p_2}-u_{\alpha_1,p_1}}_{L^2(Q)}\\
&\leq \norm{u_{\alpha_2,p_2}-u_{\alpha_1,p_2}}_{L^2(Q)}+\norm{u_{\alpha_1,p_2}-u_{\alpha_1,p_1}}_{L^2(Q)}\\
&\leq \fmp{\abs{\alpha_1-\alpha_2} TV_{p_2}(u_{\alpha_1,p_2})}^{1/2}+\fmp{\alpha_1\fsp{1-N^{-\abs{1/p_2-1/p_1}}} \fsp{TV_{p_2}(u_{\alpha_1,p_1})+TV_{p_2}(u_{\alpha_1,p_2})}}^{1/2}.
\end{align}
Next, in view of Lemma \eqref{b_differential_aa}, we have $TV_p(u_{\alpha,p})\leq TV_p(u_\eta)\leq TV_1(u_\eta)$ and hence we conclude that
\be
\norm{u_{\alpha_2,p_2}-u_{\alpha_1,p_1}}_{L^2(Q)}\leq \fmp{\abs{\alpha_1-\alpha_2}^{1/2}+2\fsp{\alpha_U\fsp{1-N^{-\abs{1/p_2-1/p_1}}}}^{1/2}}\fmp{TV_1(u_\eta)}^{1/2}
\ee
as desired.
\end{proof}
We are now ready to proof Theorem \ref{article_result}.
\begin{proof}[Proof of Theorem \ref{article_result}]
The Assertion 1 can be deduced from Proposition \ref{gammaT} directly. \\\\
We next prove Assertion 2. Let $(\alpha_1,p_1)$, $(\alpha_2,p_2)\in\mathbb T$. By Corollary \ref{cor_ready_for_estimation} we have that 
\begin{align}\label{two_step_estimation}
&\abs{\CC(\alpha_1, {p}_1)- \CC(\alpha_2, p_2)}\\
&= \abs{\norm{u_{\alpha_1, {p}_1}-u_c}_{L^2(Q)}-\norm{u_{\alpha_2,{p_2}}-u_c}_{L^2(Q)}}
\leq \norm{u_{\alpha_1, {p}_1}-u_{\alpha_2,{p_2}}}_{L^2(Q)}\\
&\leq \fmp{\abs{\alpha_1-\alpha_2}^{1/2}+2\fsp{\alpha_U\fsp{1-N^{-\abs{1/p_2-1/p_1}}}}^{1/2}}\fmp{TV_1(u_\eta)}^{1/2}.
\end{align}
Let $(\alpha_\T,p_\T)\in\mathbb A[\T]$ and a minimizing sequence $(\alpha_\Tl,p_\Tl)\in\mathbb A[\Tl]$  
 such that $(\alpha_\Tl, {p}_\Tl)\to (\alpha_\T, {p}_\T)$ as $\l\to\infty$. Also, for each $l\in\N$, we fix a finite grid $\mathbb G_l(i_l,j_l)$ (recall  \eqref{eq_finite_grid_one}) such that 
\be\label{where_the_true_opt}
(\alpha_\T, p_\T)\in \mathbb G_l(i_l,j_l)\subset\R^2.
\ee
Thus, since $\mathbb G_l(i_l,j_l)$ is closed, we have
\be\label{where_the_true_opt2}
\CC(\alpha_\T, p_\T)= \min\flp{\CC(\alpha,{p}):\,\,(\alpha,{p})\in \mathbb G_l(i_l,j_l)}.
\ee
Also, in view of \eqref{two_step_estimation} and Theorem \ref{thm:new-Gamma}, there holds
\begin{align}\label{grid_estimation_one}
&\max\flp{\CC(\alpha,{p}):\,\,(\alpha,{p})\in \mathbb G_l(i_l,j_l)} -\min\flp{\CC(\alpha,{p}):\,\,(\alpha,{p})\in \mathbb G_l(i_l,j_l)}\\
&\leq   \fmp{\abs{\delta_l}^{1/2}+2\fsp{\alpha_U\fsp{1-N^{-\delta_l}}}^{1/2}}\fmp{TV_1(u_\eta)}^{1/2}.
\end{align}
Next, if at step $l$ that $(\alpha_\Tl,{p}_\Tl)\in \mathbb G_l(i_l,j_l)$, we can immediately deduce that
\begin{align}\label{shangxia_1}
&\CC(\alpha_\Tl,  {p}_\Tl)-\CC(\alpha_\T,{p_\T})\\
&\leq \max\flp{\CC(\alpha,{p}):\,\,(\alpha,{p})\in \mathbb G_l(i_l,j_l)} - \CC(\alpha_\Tl,  {p}_\Tl)\\
&\leq \max\flp{\CC(\alpha,{p}):\,\,(\alpha,{p})\in \mathbb G_l(i_l,j_l)} - \min\flp{\CC(\alpha,{p}):\,\,(\alpha,{p})\in \mathbb G_l(i_l,j_l)},
\end{align}
where for the last inequality we used \eqref{where_the_true_opt2}.\\\\
If $(\alpha_\Tl,{p}_\Tl)\notin \mathbb G_l(i_l,j_l)$, then in view of the definition of $(\alpha_\Tl,{p}_\Tl)$, we must have
\be\label{anyway_minimizer}
\max\flp{\CC(\alpha,{p}):\,\,(\alpha,{p})\in \mathbb G_l(i_l,j_l)\cap\mathbb T_l}\geq \CC(\alpha_\Tl, {p}_\Tl).
\ee
Hence, by \eqref{anyway_minimizer} we again obtain that  
\begin{align}\label{shangxia_2}
&\CC(\alpha_\Tl,  {p}_\Tl)-\CC(\alpha_\T,{p}_\T)\\
&\leq\max\flp{\CC(\alpha,{p}):\,\,(\alpha,{p})\in \mathbb G_l(i_l,j_l)\cap\mathbb T_l}- \CC(\alpha_\T,  p_\T)\\
&\leq \max\flp{\CC(\alpha,{p}):\,\,(\alpha,{p})\in \mathbb G_l(i_l,j_l)} - \min\flp{\CC(\alpha,{p}):\,\,(\alpha,{p})\in \mathbb G_l(i_l,j_l)},
\end{align}
where at the last inequality we used the assumption \eqref{where_the_true_opt}. In the end, by \eqref{two_step_estimation}, \eqref{shangxia_1}, and \eqref{shangxia_2}, we observe that 
\begin{align}
&\CC(\alpha_\Tl,  p_\Tl)-\CC(\alpha_\T,  p_\T)\\
&\leq \max\flp{\CC(\alpha,{p}):\,\,(\alpha,{p})\in\partial \mathbb G_l(i_l,j_l)} -  \min\flp{\CC(\alpha,{p}):\,\,(\alpha,{p})\in \mathbb G_l(i_l,j_l)}\\
&\leq   \fmp{{\delta_l}^{1/2}+2\fsp{\alpha_U\fsp{1-N^{-\delta_l}}}^{1/2}}(TV_1(u_\eta))^{1/2}\\
&\leq  \sqrt{\alpha_U}\fmp{\abs{1/l}^{1/2}+2{\fsp{1-N^{-1/\sqrt l}}}^{1/2}}(TV_1(u_\eta))^{1/2}
\end{align}
and hence the thesis.
\end{proof}
\begin{remark}\label{rmk_why_thm_main}
We point out that the optimal solutions $(\alpha_\Tl, p_\Tl)\in\mathbb A[\T_l]$ can be determined precisely since $\#\flp{\T_l}<+\infty$ at each $l\in\N$. Then, for any given acceptable error, we can determine the required approximation step $l$ by using \eqref{intro_estimation_last}.
\end{remark}
\subsubsection{A relaxation of corrupted image $u_\eta$}\label{relaxation_corrupted_sec}
The assumption in Theorem \ref{article_result} that $u_\eta\in BV(Q)$ is a rather strong one and, in fact, not realistic for image denoising. We argue, however, that the error bound \eqref{scheme_B1_BV_p_main} can still be used in practice by replacing the noisy image $u_\eta$ with an approximation that has bounded variation. To be precise, we consider a sequence $\flp{u_\eta^K}_{K=1}^\infty\subset BV(Q)$ such that 
\be\label{smooth_approx_u_eta}
u_\eta^K\to u_\eta\text{ strongly in }L^2(Q)\text{ and }TV(u_\eta^K)<+\infty
\ee
and introduce the training scheme $\mathcal T^K$ (\eqref{L1_resolution_K}-\eqref{L2_resolution_K}) as follows.
\begin{flalign}
\text{Level 1. }&\,\,\,\,\,\,\,\,\,\,\,\,\,\,\,\,\,\,\,\,\,\,( \alpha_\T^K, p_\T^K)\in\mathbb A^K[\T]:=\argmin\flp{\norm{u_{\alpha,p}^K-u_c}_{L^2(Q)}^2:\,\, (\alpha,p)\in\T},\tag{{$\mathcal T^K$-L1}}\label{L1_resolution_K}&\\
\text{Level 2. }&\,\,\,\,\,\,\,\,\,\,\,\,\,\,\,\,\,\,\,\,\,\,u^K_{\alpha,p}:=\argmin\flp{\norm{u-u^K_\eta}_{L^2(Q)}^2+\alpha {TV_p(u)}:\,\, u\in BV(Q)}\tag{{$\mathcal T^K$-L2}}\label{L2_resolution_K}.&
\end{flalign}
We also define the assessment function with respect to $u_\eta^K$ by
\be
\CC^K(\alpha,p):=\norm{u_{\alpha,p}^K-u_c}_{L^2(Q)}.
\ee
\begin{theorem}\label{article_result_finite_resolution}
Let $K\in\N$, $u_c\in BV(Q)$ and $u_\eta\in L^2(Q)$ be given. Let $u_\eta^K$ be defined as in \eqref{smooth_approx_u_eta}. Then the following assertions hold.
\begin{enumerate}[1.]
\item
As $K\to\infty$, we have
\be
\operatorname{dist}(\mathbb A^K[\T],\mathbb A[\T])\to 0.
\ee
\item
For each $K\in\N$, there holds
\be
\abs{\CC^K(\alpha_\T^K, p_\T^K)-\CC( \alpha_\T, p_\T)}\leq  2\norm{u_{\eta}^K-u_\eta}_{L^2(Q)}.
\ee
\end{enumerate}
\end{theorem}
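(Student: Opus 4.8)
The plan is to reduce both assertions to a single uniform stability estimate comparing the two reconstructions obtained, at the \emph{same} parameters $(\alpha,p)$, from the data $u_\eta^K$ and $u_\eta$. The key lemma I would establish is
\be
\norm{u_{\alpha,p}^K - u_{\alpha,p}}_{L^2(Q)} \leq \norm{u_\eta^K - u_\eta}_{L^2(Q)} \quad\text{for every } (\alpha,p)\in\T,
\ee
with a constant independent of $(\alpha,p)$. To prove it I would write the optimality conditions $u_{\alpha,p}^K - u_\eta^K \in -\alpha\,\partial TV_p(u_{\alpha,p}^K)$ and $u_{\alpha,p} - u_\eta \in -\alpha\,\partial TV_p(u_{\alpha,p})$ exactly as in the proof of Proposition \ref{estimation_iso_like}, subtract them, test the difference against $u_{\alpha,p}^K - u_{\alpha,p}$, and integrate over $Q$. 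The term carrying the factor $\alpha$ has a favourable sign by monotonicity of $\partial TV_p$ (the same maximal monotonicity invoked in Proposition \ref{estimation_iso_like}) and is therefore discarded, leaving $\norm{u_{\alpha,p}^K - u_{\alpha,p}}_{L^2(Q)}^2 \leq \fjp{u_\eta^K - u_\eta,\, u_{\alpha,p}^K - u_{\alpha,p}}_{L^2}$, whence Cauchy--Schwarz yields the claim. The crucial feature is that both the $\alpha$- and the $p$-dependence disappear --- the \emph{same} seminorm $TV_p$ sits on both sides --- so the estimate holds uniformly over the entire training ground $\T$.

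Granted the key lemma, Assertion 2 is immediate. Since $\abs{\norm{u_{\alpha,p}^K - u_c} - \norm{u_{\alpha,p} - u_c}} \leq \norm{u_{\alpha,p}^K - u_{\alpha,p}}$ by the reverse triangle inequality, I obtain $\abs{\CC^K(\alpha,p) - \CC(\alpha,p)} \leq \norm{u_\eta^K - u_\eta}_{L^2(Q)}$ uniformly on $\T$, i.e. $\CC^K\to\CC$ uniformly. Comparing the two minima by the standard two-sided argument --- using that $(\alpha_\T^K,p_\T^K)$ minimizes $\CC^K$ to bound $\CC^K(\alpha_\T^K,p_\T^K) \leq \CC^K(\alpha_\T,p_\T) \leq \CC(\alpha_\T,p_\T) + \norm{u_\eta^K-u_\eta}_{L^2(Q)}$, and symmetrically using that $(\alpha_\T,p_\T)$ minimizes $\CC$ --- gives $\abs{\CC^K(\alpha_\T^K,p_\T^K) - \CC(\alpha_\T,p_\T)} \leq \norm{u_\eta^K - u_\eta}_{L^2(Q)}$, which a fortiori implies the stated bound with the (non-sharp) constant $2$.

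For Assertion 1, I would argue as in Proposition \ref{gammaT}, replacing the $\Gamma$-convergence of the finite grids by the uniform convergence $\CC^K\to\CC$ just established. Take any $(\alpha_\T^K,p_\T^K)\in\mathbb A^K[\T]$; the $p$-component lies in the compact $[1,+\infty]$, and the $\alpha$-component can be confined to $[0,\alpha_U^K]$ by applying Proposition \ref{prop:last_word_go_upper} to the datum $u_\eta^K$, so a subsequence converges to some $(\bar\alpha,\bar p)\in\T$. Then
\be
\abs{\CC^K(\alpha_\T^K,p_\T^K) - \CC(\bar\alpha,\bar p)} \leq \norm{u_\eta^K - u_\eta}_{L^2(Q)} + \abs{\CC(\alpha_\T^K,p_\T^K) - \CC(\bar\alpha,\bar p)} \to 0,
\ee
using the key lemma for the first summand and continuity of $\CC$ (Theorem \ref{thm:new-Gamma}) for the second; combined with $\min\CC^K\to\min\CC$ this forces $\CC(\bar\alpha,\bar p)=\min\CC$, so $(\bar\alpha,\bar p)\in\mathbb A[\T]$.

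The main obstacle is the uniform stability lemma: everything downstream is soft once the reconstruction is shown to depend on the datum in an $L^2$-contractive way \emph{uniformly} in $(\alpha,p)$. The only genuinely technical point beyond it is the compactness input for Assertion 1, namely the uniform boundedness $\sup_K\alpha_U^K<+\infty$ of the stopping parameters, which I would verify by tracing the construction in Lemma \ref{stopping_2d} and observing that the $L^2$-convergence $u_\eta^K\to u_\eta$ keeps $\norm{u_\eta^K - (u_\eta^K)_Q}_{L^2(Q)}$ bounded, thereby controlling the thresholds.
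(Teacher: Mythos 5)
Your proposal is correct, and its centerpiece --- the uniform contraction estimate $\norm{u^K_{\alpha,p}-u_{\alpha,p}}_{L^2(Q)}\leq \norm{u^K_\eta-u_\eta}_{L^2(Q)}$ for every $(\alpha,p)\in\T$ --- is exactly the estimate the paper derives in its proof of Assertion 2, by the same subtraction of optimality conditions, testing against the difference, and discarding the monotone term via maximal monotonicity of $\partial TV_p$; your explicit two-sided comparison of minima is also how the paper finishes (it defers to ``the argument used in the proof of Theorem \ref{article_result}''), and in fact yields the sharper constant $1$ in place of the stated $2$. Where you genuinely differ is Assertion 1: the paper proves a separate joint-stability result, Proposition \ref{approx_bilevel}, showing that $u^K_{\alpha_K,p_K}\wtos u_{\alpha,p}$ in $BV(Q)$ together with convergence of the $TV$ values whenever $u^K_\eta\to u_\eta$ in $L^2$ and $(\alpha_K,p_K)\to(\alpha,p)$, and cites it as giving Assertion 1 ``directly''; you instead re-use the contraction estimate to get uniform convergence $\CC^K\to\CC$ on $\T$ and then run a compactness argument on the minimizers. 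Your route is more economical (one lemma powers both assertions) and it makes explicit two points that the paper's one-line deduction leaves implicit --- the confinement of $\alpha^K_\T$ to a common compact set, i.e. $\sup_K\alpha^K_U<+\infty$, and the passage from parameter convergence to convergence of assessment values --- while the paper's route produces strictly stronger information about the reconstructions themselves (weak* $BV$ convergence and $TV$ convergence under simultaneous perturbation of data and parameters), which is of independent use. One small repair to your Assertion 1: attributing ``continuity of $\CC$'' to Theorem \ref{thm:new-Gamma} is too quick, since $\Gamma$-convergence only gives weak*-$BV$ (hence $L^1$, not $L^2$) convergence of the minimizers $u_{\alpha,p}$, and full $L^2$-continuity of $\CC$ requires either the energy-convergence upgrade contained in Proposition \ref{approx_bilevel} or, when $u_\eta\in BV(Q)$, the Lipschitz bound of Corollary \ref{cor_ready_for_estimation}, which is unavailable here because $u_\eta$ is merely $L^2$; your argument survives nonetheless, because all you need is lower semicontinuity of $\CC$ along the convergent subsequence, which follows from Theorem \ref{thm:new-Gamma} plus Fatou's lemma exactly as in the proof of Theorem \ref{main_scheme_K_result} --- replace $\abs{\CC(\alpha^K_\T,p^K_\T)-\CC(\bar\alpha,\bar p)}\to 0$ by the one-sided bound $\CC(\bar\alpha,\bar p)\leq\liminf_{K\to\infty}\CC(\alpha^K_\T,p^K_\T)$, which combined with $\min\CC^K\to\min\CC$ still forces $\CC(\bar\alpha,\bar p)=\min\CC$, i.e. $(\bar\alpha,\bar p)\in\mathbb A[\T]$.
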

Before we prove Theorem \ref{article_result_finite_resolution}, we first prove an enhanced version of Proposition \ref{compact_semi_para}.
\begin{proposition}\label{approx_bilevel}
Let $\flp{u_\eta^K}_{K=1}^\infty\subset BV(Q)$ and $\flp{\alpha_K,p_K}_{K=1}^\infty\subset \T$ be such that 
\be\label{compact_semi_para_eq1}
u_\eta^K\to u_\eta\text{ strongly in }L^2
\ee
and 
\be\label{compact_semi_para_eq2}
(\alpha_K,p_K)\to (\alpha,p)\in \T.
\ee
Then we have
\be\label{multi_conv_recon3}
u^K_{\alpha_K,p_K}\wtos u_{\alpha,p}\text{ in }BV
\ee
and
\be\label{multi_conv_recon_TV}
\lim_{K\to\infty} TV_{p_K}(u^K_{\alpha_K,p_K})=TV_p(u_{\alpha,p}).
\ee
\end{proposition}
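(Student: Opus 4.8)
The plan is to argue by the direct method together with the $\Gamma$-convergence machinery already established in Section \ref{GOTLITF}, the only genuinely new feature being that the data $u_\eta^K$ now varies along with the parameters. Write $u^K:=u^K_{\alpha_K,p_K}$ for brevity. First I would extract a uniform bound in $BV(Q)$ by testing the minimality of $u^K$ against the admissible competitor $u\equiv 0$, which gives
\be
\norm{u^K-u_\eta^K}_{L^2(Q)}^2+\alpha_K TV_{p_K}(u^K)\leq \norm{u_\eta^K}_{L^2(Q)}^2.
\ee
Since $u_\eta^K\to u_\eta$ in $L^2(Q)$ by \eqref{compact_semi_para_eq1}, the right-hand side is bounded; and as $\alpha_K\to\alpha\in\R^+$ is bounded below for $K$ large, this controls both $\norm{u^K}_{L^2(Q)}$ and $TV_{p_K}(u^K)$. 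Invoking \eqref{equivalent_semi_TA_all} to pass from $TV_{p_K}$ to $TV_2$ then yields $\sup_K\norm{u^K}_{BV(Q)}<+\infty$, so, up to a (non-relabelled) subsequence, $u^K\wtos\bar u$ in $BV(Q)$ with $u^K\to\bar u$ in $L^1(Q)$ and a.e.

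Next I would identify $\bar u$ as the minimiser $u_{\alpha,p}$ of the $u_\eta$-functional. For the lower bound the fidelity term is handled by Fatou's lemma: since $u^K\to\bar u$ and $u_\eta^K\to u_\eta$ a.e.\ along a further subsequence, $|u^K-u_\eta^K|^2\to|\bar u-u_\eta|^2$ a.e., whence $\norm{\bar u-u_\eta}_{L^2(Q)}^2\leq\liminf_K\norm{u^K-u_\eta^K}_{L^2(Q)}^2$; the regularising term is controlled by Proposition \ref{compact_semi_para}, giving $TV_p(\bar u)\leq\liminf_K TV_{p_K}(u^K)$. Combined with $\alpha_K\to\alpha$ these produce
\be
\norm{\bar u-u_\eta}_{L^2(Q)}^2+\alpha TV_p(\bar u)\leq \liminf_{K\to\infty}\fsp{\norm{u^K-u_\eta^K}_{L^2(Q)}^2+\alpha_K TV_{p_K}(u^K)}.
\ee
For the matching upper bound I would test the minimality of $u^K$ against the fixed competitor $w:=u_{\alpha,p}$ and take $\limsup_K$: the fidelity summand converges to $\norm{w-u_\eta}_{L^2(Q)}^2$ by \eqref{compact_semi_para_eq1}, while Proposition \ref{new_equal} gives $\limsup_K\alpha_K TV_{p_K}(w)=\alpha TV_p(w)$. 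Chaining the two inequalities shows $\bar u$ attains the minimum of the $u_\eta$-functional; strict convexity of the $L^2$ fidelity forces uniqueness of that minimiser, so $\bar u=u_{\alpha,p}$ and the whole sequence converges, which is \eqref{multi_conv_recon3}.

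Finally, for \eqref{multi_conv_recon_TV} I would observe that the two displayed inequalities sandwich the energies, so $\lim_K\fsp{\norm{u^K-u_\eta^K}_{L^2(Q)}^2+\alpha_K TV_{p_K}(u^K)}$ exists and equals $\norm{u_{\alpha,p}-u_\eta}_{L^2(Q)}^2+\alpha TV_p(u_{\alpha,p})$. Writing $a_K:=\norm{u^K-u_\eta^K}_{L^2(Q)}^2$ and $b_K:=\alpha_K TV_{p_K}(u^K)$, I have $a_K+b_K\to A+B$ together with $\liminf_K a_K\geq A$ and $\liminf_K b_K\geq B$ from the two one-sided inequalities above; a routine argument then forces $a_K\to A$ and $b_K\to B$ separately. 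Dividing $b_K\to\alpha TV_p(u_{\alpha,p})$ by $\alpha_K\to\alpha\in\R^+$ delivers \eqref{multi_conv_recon_TV}. The main obstacle I anticipate is the lower semicontinuity of the fidelity term while the data itself moves, since one cannot appeal directly to weak $L^2$ lower semicontinuity of $\norm{\cdot-u_\eta^K}$; this is precisely why I route that step through a.e.\ convergence and Fatou. The secondary delicate point is the last step, where separate convergence of the fidelity and regularising parts must be recovered from convergence of their sum, using the two \emph{liminf} bounds together with the strict positivity $\alpha>0$ to conclude $TV_{p_K}(u^K)\to TV_p(u_{\alpha,p})$ and not merely the product.
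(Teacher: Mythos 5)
Your argument follows essentially the same route as the paper's own proof: the same competitor $u\equiv 0$ for the uniform $BV(Q)$ bound, the same identification of the weak* limit via a Fatou bound on the moving fidelity term together with Proposition \ref{compact_semi_para} for the $TV$ liminf, Proposition \ref{new_equal} applied to the fixed competitor $u_{\alpha,p}$ for the upper bound, and uniqueness of the minimiser to conclude. Where you differ is in the last step, and there your version is actually tighter than the paper's: the paper asserts $\lim_{K}\norm{u^K_{\alpha_K,p_K}-u_\eta^K}_{L^2(Q)}^2=\norm{u_{\alpha,p}-u_\eta}_{L^2(Q)}^2$ directly ``by \eqref{multi_conv_recon3}'', which is not immediate, since weak* convergence in $BV(Q)$ yields strong convergence in $L^1(Q)$ but not in $L^2(Q)$ when $N\geq 2$; your sandwich argument (convergence of the sum of energies plus the two one-sided liminf bounds forces separate convergence of each term, then divide by $\alpha_K\to\alpha>0$) supplies exactly the justification the paper glosses over.

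The one genuine omission is the degenerate case $\alpha=0$. By Definition \ref{finite_playground_l} the training ground in Section \ref{thaos_sec} has been reduced to $\T=[0,\alpha_U]\times[1,+\infty]$, so the hypothesis $(\alpha_K,p_K)\to(\alpha,p)\in\T$ admits $\alpha=0$, and the paper's proof treats this case separately. Your argument uses strict positivity of $\alpha$ in essential places: the uniform lower bound on $\alpha_K$ that produces the $BV$ bound and hence the compactness, and the final division by $\alpha_K$. When $\alpha=0$ both steps fail, and in fact the conclusion must be read differently, since $u_{0,p}=u_\eta$ need not belong to $BV(Q)$, so \eqref{multi_conv_recon3} cannot hold weakly* in $BV(Q)$; instead one shows $u^K_{\alpha_K,p_K}\to u_\eta$ strongly in $L^2(Q)$ by the argument of \eqref{goes_to_noisy2} in the proof of Theorem \ref{main_scheme_K_result}, which is what the paper invokes. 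You should either add this case along those lines or state explicitly that your proof covers $\alpha>0$ only.
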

\begin{proof}
We assume first that $\alpha>0$. By \eqref{compact_semi_para_eq1} and \eqref{compact_semi_para_eq2}, there exist $N^\ast\in\N$ such that 
\be
\frac12\alpha\leq \alpha_K \leq \alpha+1\text{ and }\norm{u_\eta^K}_{L^2(Q)}^2\leq \norm{u_\eta}_{L^2(Q)}^2+1,\text{ for all }n\geq N^\ast.
\ee
It follows that 
\begin{align}
\frac12\alpha TV_2(u^K_{\alpha_K,p_K})&\leq \max\flp{1, N^{1/p_K-1/2}}\frac12\alpha TV_{p_K}(u^K_{\alpha_K,p_K})\\
&\leq{\alpha_K}TV_{p_K}(u^K_{\alpha_K,p_K})
\leq \norm{u^K_{\alpha_K,p_K}-u_\eta^K}_{L^2(Q)}^2+{\alpha_K}TV_{p_K}(u^K_{\alpha_K,p_K})\\
&\leq \norm{u_\eta^K}_{L^2(Q)}^2\leq \norm{u_\eta}_{L^2(Q)}^2+1<+\infty.
\end{align}
Thus, we have
\be
\sup\flp{\norm{u^K_{\alpha_K,p_K}}_{BV(Q)}:\,\, n\in\N}<+\infty,
\ee
and, up to a (not-relabeled) subsequence, there exists $w\in BV(Q)$ such that 
\be\label{liminf_ineq_a1}
u^K_{\alpha_K,p_K}\wtos w\text{ in $BV$ and }u^K_{\alpha_K,p_K}\to w\text{ in }L^1\text{ and }\text{ a.e..}
\ee
We claim that $w=u_{\alpha,p}$ a.e.. Indeed, since $u^K_{\alpha_K,p_K}$ is the unique minimizer of \eqref{L2_resolution_K}, we have that 
\be
\norm{u_{\alpha,p}-{u_\eta^K}}_{L^2(Q)}^2+\alpha_KTV_{p_K}(u_{\alpha,p})\geq \norm{u^K_{\alpha_K,p_K}-{u_\eta^K}}_{L^2(Q)}^2+\alpha_KTV_{p_K}(u^K_{\alpha_K,p_K}),
\ee
and hence
\begin{align}\label{multi_conv_recon1}
&\liminf_{K\to\infty}\norm{u_{\alpha,p}-{u_\eta^K}}_{L^2(Q)}^2+\alpha_KTV_{p_K}(u_{\alpha,p})\\
&\geq\liminf_{K\to\infty} \norm{u^K_{\alpha_K,p_K}-{u_\eta^K}}_{L^2(Q)}^2+\liminf_{K\to\infty}\alpha_KTV_{p_K}(u^K_{\alpha_K,p_K})\\
& \geq \norm{w-u_\eta}_{L^2(Q)}^2+\alpha TV_p(w),
\end{align}
where at the last inequality we used Fatou's lemma and \eqref{liminf_ineq_a1}. On the other hand, we have
\be\label{multi_conv_recon2}
\limsup_{K\to\infty}\,\norm{u_{\alpha,p}-{u_\eta^K}}_{L^2(Q)}^2+\alpha_KTV_{p_K}(u_{\alpha,p})=\norm{u_{\alpha,p}-u_\eta}_{L^2(Q)}^2+\alpha TV_p(u_{\alpha,p}),
\ee
where we used the fact that $u_\eta^K\to u_\eta$ in $L^2$ and Proposition \ref{new_equal}. Hence, by \eqref{multi_conv_recon1} and \eqref{multi_conv_recon2} we obtain that 
\be
\norm{u_{\alpha,p}-u_\eta}_{L^2(Q)}^2+\alpha TV_p(u_{\alpha,p})\geq \norm{w-u_\eta}_{L^2(Q)}^2+\alpha TV_p(w).
\ee
Therefore, we must have $u_{\alpha,p} = w$ since the minimizer of \eqref{scheme_B2_BV_p_main} is unique, which concludes \eqref{multi_conv_recon3} as desired.\\\\
We next claim \eqref{multi_conv_recon_TV}. Indeed, the liminf inequality 
\be\label{multi_conv_recon10}
\liminf_{K\to\infty} TV_{p_K}(u^K_{\alpha_K,p_K})\geq TV_p(u_{\alpha,p})
\ee
can be directly obtained from \eqref{star}. Again, since $u_{\alpha_K,p_K}^n$ is the unique minimizer, we observe that 
\be
\norm{u^K_{\alpha_K,p_K}-u_\eta^K}_{L^2(Q)}^2+\alpha_K TV_{p_K}(u^K_{\alpha_K,p_K})\leq \norm{u_{\alpha,p}-u_\eta^K}_{L^2(Q)}^2+\alpha_K TV_{p_K}(u_{\alpha,p}).
\ee
By \eqref{multi_conv_recon3} we infer that 
\be
\lim_{K\to\infty}\norm{u^K_{\alpha_K,p_K}-u_\eta^K}_{L^2(Q)}^2 = \norm{u_{\alpha,p}-u_\eta}_{L^2(Q)}^2,
\ee
which, in turn, yields
\begin{align}
&\norm{u_{\alpha,p}-u_\eta}_{L^2(Q)}^2+\alpha TV_p(u_{\alpha,p})\\
&\leq \lim_{K\to\infty}\norm{u^K_{\alpha_K,p_K}-u_\eta^K}_{L^2(Q)}^2+\limsup_{K\to\infty}\alpha_K TV_{p_K}(u^K_{\alpha_K,p_K})\\
&\leq  \limsup_{K\to\infty}\fmp{\norm{u_{\alpha,p}-u_\eta^K}_{L^2(Q)}^2+\alpha_K TV_{p_K}(u_{\alpha,p})}\\
&\leq\norm{u_{\alpha,p}-u_\eta}_{L^2(Q)}^2+\alpha TV_p(u_{\alpha,p}),
\end{align}
where at the last inequality we used Proposition \ref{new_equal} again. Thus, we conclude that 
\be
\limsup_{K\to\infty}\alpha_K TV_{p_K}(u^K_{\alpha_K,p_K})= \alpha TV_p(u_{\alpha,p}).
\ee
This, together with \eqref{multi_conv_recon10}, we conclude \eqref{multi_conv_recon_TV} and hence the thesis for the case $\alpha>0$.\\\\
Lastly, we assume $\alpha=0$. In this case we have $u_{0,p}=u_\eta$, and we could refer to the proof used in Theorem \ref{main_scheme_K_result} to conclude our thesis.
\end{proof}

\begin{proof}[{Proof of Theorem \ref{article_result_finite_resolution}}]
The Assertion 1 can be directly deduced from Proposition \ref{approx_bilevel}. We focus on claiming Assertion 2. Let $u_{\alpha,p}$ and $u_{\alpha,p}^K$ be obtained from \eqref{scheme_B2_BV_p_main} and \eqref{L2_resolution_K}, respectively. By the optimality condition, we have 
\be
u_{\alpha,p}-u_\eta = -\alpha \partial TV_p(u_{\alpha,p})
\ee
and 
\be
u_{\alpha,p}^K-u_{\eta}^K = -\alpha\partial TV_p(u_{\alpha,p}^K).
\ee
Subtracting one from another, we deduce that
\be
u_{\alpha,p}-u_{\alpha,p}^K+u_{\eta}^K-u_\eta=-\alpha\fmp{\partial TV_p(u_{\alpha,p})-\partial TV_p(u_{\alpha,p}^K)}.
\ee
Hence, by multiplying $u_{\alpha,p}-u_{\alpha,p}^K$ on the both hand side and integrating over $Q$, we have that 
\begin{align}
&\norm{u_{\alpha,p}-u_{\alpha,p}^K}_{L^2(Q)}^2+\fjp{u_{\alpha,p}-u_{\alpha,p}^K,u_{\eta}^K-u_\eta}_{L^2}\\
 &= -\alpha\fjp{\partial TV_p(u_{\alpha,p})-\partial TV_p(u_{\alpha,p}^K),u_{\alpha,p}-u_{\alpha,p}^K}_{L^2}\\
 &\leq 0,
\end{align}
which yields that
\be
\norm{u_{\alpha,p}-u_{\alpha,p}^K}_{L^2(Q)}\leq \norm{u_{\eta}^K-u_\eta}_{L^2(Q)}.
\ee
We conclude our thesis by following the argument used in the proof of Theorem \ref{article_result}.
\end{proof}
In \cite{liu2014ANISOTRIPIC}, it is shown that if the noisy image $u_\eta$ is a piece-wise constant function, we can take advantage of this when numerically computing the solution $u_{\alpha,p}$.
One good choice for the approximation sequence $u_\eta^K$, therefore, could be the piece-wise average of $u_\eta$ introduced as follows.
\begin{define}[Piece-wise approximation of the noisy image]
Let $Q=(0,1)^N$ and the corrupted image $u_\eta\in L^2(Q)$ be given. We define the $K$-resolution approximation $u_\eta^K$ of $u_\eta$ via its average
\be\label{finite_represent}
u_\eta^K(x):=\fint_{Q_K\fsp{k_1,\ldots,k_N}} {u_\eta}\,dx\text{ for }x\in Q_K\fsp{k_1,\ldots,k_N},
\ee  
where 
\be
Q_K\fsp{k_1,\ldots,k_N}:=[k_1-1/K,k_1/K]\times\cdots\times \fmp{k_N-1/K,k_N/K},
\ee
for $1\leq k_1,\ldots,k_N\leq K$.
\end{define}
We note that $u_\eta^K$ defined in \eqref{finite_represent} satisfies \eqref{smooth_approx_u_eta}.\\\\
As a result of Theorem \ref{article_result} and Theorem \ref{article_result_finite_resolution}, the following corollary can be established.
\begin{corollary}\label{main_combine_coro}
Let $u_c\in BV(Q)$ and $u_\eta\in L^2(Q)$ be given. Then, for arbitrary $\e>0$, there exists $l_\e\in \N$ large enough such that 
\be
\abs{\CC(\alpha_{\mathbb T_{l_\e}}, p_{\mathbb T_{l_\e}})-\CC(\alpha_\T,p_\T)}\leq \e.
\ee
\end{corollary}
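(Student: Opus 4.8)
The plan is to combine the \emph{quantitative} finite-grid estimate of Theorem \ref{article_result}, which is available only for a datum in $BV(Q)$, with the relaxation estimate of Theorem \ref{article_result_finite_resolution}, which handles an arbitrary $u_\eta\in L^2(Q)$, through a diagonal (two-parameter) argument. For each resolution $K\in\N$, write $u_\eta^K\in BV(Q)$ for the piece-wise average from \eqref{finite_represent}, let $\CC^K$ be the associated assessment function, and let $(\alpha_{\Tl}^K,p_{\Tl}^K)\in\mathbb A^K[\Tl]$ be the minimiser of $\CC^K$ over the finite set $\Tl$. This pair is genuinely computable, since $\#\flp{\Tl}<+\infty$ and $u_\eta^K\in BV(Q)$, and it is this pair — with $K$ and $l$ chosen as below — that plays the role of $(\alpha_{\mathbb T_{l_\e}},p_{\mathbb T_{l_\e}})$ in the statement.

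First I would record a \emph{pointwise} comparison between the true and the relaxed assessment functions. The computation in the proof of Theorem \ref{article_result_finite_resolution} produces $\norm{u_{\alpha,p}-u_{\alpha,p}^K}_{L^2(Q)}\leq \norm{u_\eta^K-u_\eta}_{L^2(Q)}$ for \emph{every} $(\alpha,p)\in\T$, and not merely at the optimal parameters; hence, by the reverse triangle inequality,
\be
\abs{\CC(\alpha,p)-\CC^K(\alpha,p)}\leq \norm{u_\eta^K-u_\eta}_{L^2(Q)}\quad\text{for all }(\alpha,p)\in\T.
\ee

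Next I would split the target error, via the triangle inequality, around the two intermediate quantities $m^K:=\CC^K(\alpha_\T^K,p_\T^K)=\min_{\T}\CC^K$ and $m_l^K:=\CC^K(\alpha_{\Tl}^K,p_{\Tl}^K)=\min_{\Tl}\CC^K$:
\bas
&\abs{\CC(\alpha_{\Tl}^K,p_{\Tl}^K)-\CC(\alpha_\T,p_\T)}\\
&\leq \abs{\CC(\alpha_{\Tl}^K,p_{\Tl}^K)-\CC^K(\alpha_{\Tl}^K,p_{\Tl}^K)}+\abs{m_l^K-m^K}+\abs{m^K-\CC(\alpha_\T,p_\T)}.
\eas
The first summand is at most $\norm{u_\eta^K-u_\eta}_{L^2(Q)}$ by the pointwise comparison above; the third summand is at most $2\norm{u_\eta^K-u_\eta}_{L^2(Q)}$ by Assertion 2 of Theorem \ref{article_result_finite_resolution}; and the middle summand is exactly the quantity controlled by Theorem \ref{article_result} applied to the datum $u_\eta^K\in BV(Q)$, so it is bounded by the right-hand side of \eqref{intro_estimation_last} with $\alpha_U$ and $TV_1(u_\eta)$ replaced by their counterparts $\alpha_U^K$ and $TV_1(u_\eta^K)$ for $u_\eta^K$.

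Finally I would fix the two parameters \emph{in order}. Given $\e>0$, since $u_\eta^K\to u_\eta$ in $L^2(Q)$ by \eqref{smooth_approx_u_eta}, I choose $K=K_\e$ with $3\norm{u_\eta^{K_\e}-u_\eta}_{L^2(Q)}\leq\e/2$, which disposes of the first and third summands. With $K_\e$ now \emph{frozen}, the constants $\alpha_U^{K_\e}$ and $TV_1(u_\eta^{K_\e})$ are finite, so the bound of Theorem \ref{article_result} tends to $0$ as $l\to\infty$; hence I pick $l_\e$ so large that the middle summand is $\leq\e/2$, and summing yields the claim. The main obstacle — and precisely the reason the argument must be diagonal rather than a single limit — is this dependence of $\alpha_U^K$ and $TV_1(u_\eta^K)$ on the resolution $K$: one cannot send $K$ and $l$ to infinity simultaneously with an a priori rate, so $K$ has to be frozen first, to absorb the relaxation error, before the finite-grid parameter $l$ is driven to infinity to absorb the discretisation error.
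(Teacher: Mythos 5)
Your proposal is correct and follows essentially the same route as the paper: a three-term triangle inequality through the relaxed assessment function $\CC^{K}$, choosing the resolution $K_\e$ first (via Theorem \ref{article_result_finite_resolution}) to absorb the relaxation error, and only then the grid index $l_\e$ (via Theorem \ref{article_result} applied to $u_\eta^{K_\e}\in BV(Q)$, whose constants $\alpha_U^{K_\e}$ and $TV_1(u_\eta^{K_\e})$ are finite once $K_\e$ is frozen) to absorb the discretisation error. If anything, your write-up is slightly more careful than the paper's, since you make explicit the pointwise bound $\abs{\CC(\alpha,p)-\CC^{K}(\alpha,p)}\leq\norm{u_\eta^{K}-u_\eta}_{L^2(Q)}$ needed to control the term comparing the true and relaxed assessment functions at the finite-grid minimiser, a step the paper leaves implicit.
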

\begin{proof}
Let $\e>0$ be given. Then by Theorem \ref{article_result_finite_resolution}, we could choose $K_\e\in \N$ large enough so that 
\be\label{main_combine_coro_eq}
\abs{\CC^{K_\e}(\alpha_\T^{K_\e}, p_\T^{K_\e})-\CC( \alpha_\T, p_\T)}\leq  2\norm{u_{\eta}^{K_\e}-u_\eta}_{L^2(Q)}\leq \frac14\e.
\ee
On the other hand, by Theorem \ref{article_result}, we have that 
\be
\abs{\CC^{K_\e}(\alpha_{\T_l}^{K_\e}, p_{\T_l}^{K_\e})-\CC^{K_\e}( \alpha_\T^{K_\e}, p_\T^{K_\e})}\leq  \sqrt{\alpha^K_U}\fmp{{1/l}^{1/2}+2{\fsp{1-N^{-1/\sqrt l}}}^{1/2}}[TV_1(u^K_\eta)]^{1/2},
\ee
where by \eqref{finite_represent} we observe that $TV_1(u^K_\eta)<+\infty$ even if $u_\eta\notin BV(Q)$.\\\\
Hence, by taking $l_\e\in\N$ large enough, and together with \eqref{main_combine_coro_eq}, we conclude
\begin{align}
&\abs{\CC(\alpha_{\mathbb T_{l_\e}}, p_{\mathbb T_{l_\e}})-\CC(\alpha_\T,p_\T)}\\
&\leq \abs{\CC(\alpha_{\mathbb T_{l_\e}}, p_{\mathbb T_{l_\e}})-\CC^{K_\e}(\alpha_{\T_{l_\e}}^{K_\e}, p_{\T_{l_\e}}^{K_\e})}+\abs{\CC^{K_\e}(\alpha_\T^{K_\e}, p_\T^{K_\e})-\CC(\alpha_\T,p_\T)}\\
&\,\,\,\,\,\,+\abs{\CC^{K_\e}(\alpha_{\T_{l_\e}}^{K_\e}, p_{\T_{l_\e}}^{K_\e})-\CC^{K_\e}(\alpha_\T^{K_\e}, p_\T^{K_\e})}\\
&\leq \frac12\e+\frac12\e\leq \e,
\end{align}
as desired.
\end{proof}

Then, based on Corollary \ref{main_combine_coro}, we suggest the following practical strategy for computing an acceptable solution $u_{\alpha_\Tl, p_\Tl}$. \\\\
\noindent\fbox{%
    \parbox{\textwidth}{%
Let $u_c\in BV(Q)$ and $u_\eta\in L^2(Q)$ be given. Let an acceptable error $\e>0$ be given.
\begin{itemize}
\item
Initialization: Compute $\alpha_U$ defined in Proposition \ref{prop:last_word_go_upper} and construct piece-wise constant function $u_\eta^K$, defined in \eqref{finite_represent}, such that 
\be
\norm{u_\eta^K-u_\eta}_{L^2(Q)}\leq \frac14\e
\ee
is satisfied.
\item
Step 1: Submit $u_\eta^K$ into Theorem \ref{article_result}, and determine step $l\in\N$ so that the right hand side of \eqref{intro_estimation_last} less than $\e/4$.
\item
Step 2: Determine one optimal solution $(\alpha_\Tl, p_\Tl)\in\mathbb A[\T_l]$. By Theorem \ref{article_result} and Theorem \ref{article_result_finite_resolution} we have that
\be
\abs{\CC(\alpha_\Tl, p_\Tl)-\CC(\alpha_\T,p_\T)}\leq \e.
\ee
\item
Step 3: The reconstructed image $u_{\alpha_\Tl, p_\Tl}$ is then an acceptable optimal solution defined in \eqref{accptable_train_result_intro}.
\end{itemize}
}%
}

\section{Numerical simulations and conclusions}\label{sec_NSC}
\subsection{Simulations and insights}
We perform numerical simulations of the bilevel scheme $\mathcal T$ using the clean image $u_c$ and the noisy image $u_\eta$ shown in the first and second picture in Figure \ref{fig:clean_noise}, respectively, and we report that their total variations $TV_1(u_\eta)=264.5255$ and $TV_1(u_c)=98.4627$. 
Note, that we are reporting the TV values of finite resolution digital images which coincide with their piece-wise constant approximation mentioned in Section \ref{relaxation_corrupted_sec}. Ideally, a clean image $u_c\in BV(Q)$ can only be captured by a ``super" camera which has infinite resolution. However, in the real world, such ``super" camera, with infinite resolution, does not exist, and hence, in the numerical section, we assume that a finite $K\in\N$ resolution clean image $u_c^K$ that we wish to capture by a real world digital camera is a piecewise constant function, which is related to $u_c$ via its averages and defined in the way of \eqref{finite_represent}, with $u_c$ in place of $u_\eta$.\\\\
The principal sources of noise in digital images are introduced during acquisition, for example, the sensor noise caused by poor illumination, circuity of a scanner, and the unavoidable shot noise of a photon detector. The noise is only generated during the acquiring of the image, i.e., it is only added to $u_{c}^K$; and each time we acquire an image, we produce a different noise $\eta^K$ . Therefore, we propose to use a piecewise constant function $\eta^K$ over $Q_K$ to represent the noise at the resolution level $K\in\N$, and we write
\be
u_\eta^K :=u_c^K +\eta_K.
\ee
That is, when a image is taken with resolution $K\in\N$, although we only wish to observe $u_c^K$ , the noise $\eta^K$ is an unavoidable by-product, and hence the corrupted image $u_\eta^K$ is produced.\\\\
Therefore, we assume that all (corrupted) images captured by digital camera with finite resolution is already a piece-wise constant function, which implies that $u_\eta\in BV(Q)$, and hence no relaxation is needed as studied in Section \ref{relaxation_corrupted_sec}.

\begin{figure}[!h]
  \centering
        \includegraphics[width=1.0\linewidth]{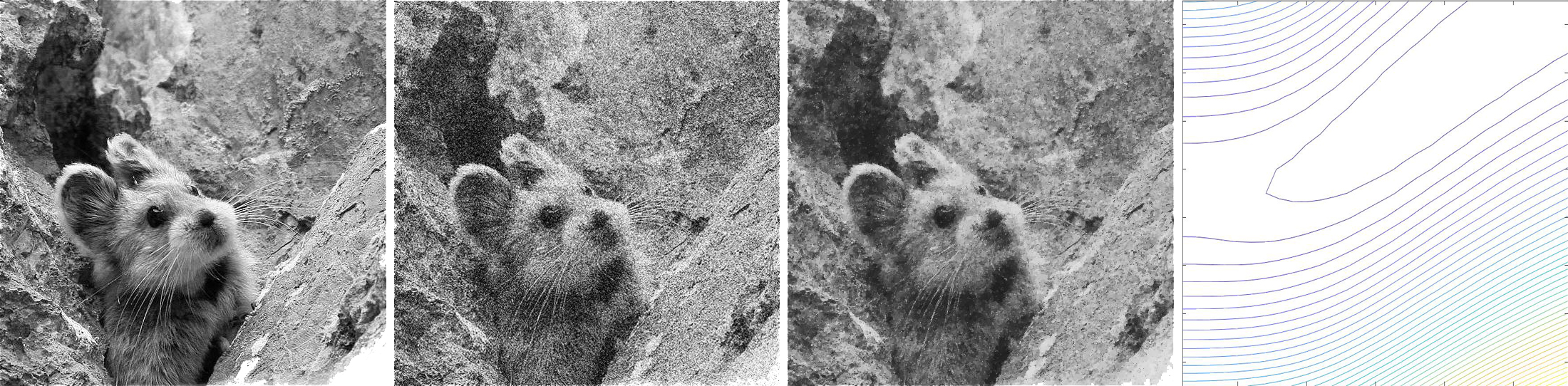}
\caption{L-R: Clean image; noisy version (with artificial Gaussian noise); optimally reconstructed image $u_{\alpha_\T, p_\T}$ provided by the Scheme $\mathcal T$ \eqref{scheme_B1_BV_p_main}-\eqref{scheme_B2_BV_p_main}, where $p_\T=2.6622$; and the contour plot of the assessment function $\mathcal A(\alpha,p)$ at $\T_{l}$, $l=10000$. We note that $\mathcal A(\alpha,p)$ is indeed not quasi-convex.}
\label{fig:clean_noise}
\end{figure}

The Level 2 problem \eqref{scheme_B2_BV_p_main} is solved via the primal-dual algorithm studied in \cite{chambolle2011first, chambolle2017stochastic}, as we can recast \eqref{scheme_B2_BV_p_main} as 
\be
\min{\max\flp{-\alpha\fjp{u,\divg\vp}+\norm{u-u_\eta}_{L^2}^2-\delta_{V_{p^\ast}}(v):\,\, v\in C_c^\infty(Q;\R^2),\,\,u\in L^2(Q)}}.
\ee
Here $\delta_{V_{q^{\ast}}}$ denotes the indicator function of the set $V_q$ 
\be
V_{q}:=\flp{v\in C_c^\infty:\,\,\abs{v}_{\ell^q}\leq 1}.
\ee
For the sake of appropriate comparison, we use the finite training ground $\T_l$ at $l=10000$ to simulate the continuous training ground $\T$, and we plot the contour image of $\CC(\alpha,p)$ at $\T_{l}$, $l=10000$, in the last column in Figure \ref{fig:clean_noise}. We summarize our simulation results and computed $l$'s from Theorem \ref{article_result} in Table \ref{table_test_result}. Note that the step $l$ predicted by Theorem \ref{article_result} (shown in column 2) is rounded up to the nearest integer.

\begin{table}[!h]
\begin{tabular}{|l|l|l|l|l|l|}
\hline
Acceptable error & $l$ estimated by Theorem \ref{article_result} & Numerical error  & Optimal $(\alpha_\T,p_\T)$ \\ \hline
$\e=0.2$ & $l=37$  & 0.1018   & (0.04,\,2.26) \\ \hline
$\e=0.1$ &$l=472$  &0.0217  &  (0.042,\,2.263)   \\ \hline
 $\e=0.05$&$l=7319$ & 0.0153 & (0.0488,\,2.2622)     \\ \hline
\end{tabular}
\caption{Simulation for the example in Figure \ref{fig:clean_noise} for different discretisation levels $l$ selected from Theorem \ref{article_result} with three acceptable errors.  We also report that $TV_{2.6622}(u_c)=14.6654$ while $TV_{2.2622}(u_{0.0488,\,2.6622})=11.0291$. }
\label{table_test_result}
\end{table}
We also applied the estimate in Theorem \ref{article_result} for the bilevel scheme $\mathcal B$ and we observe for the example in Figure \ref{fig:clean_noise} that
\be
\inf\CC(\alpha,p)<\inf\CC(\alpha,2),
\ee
which indicates that the scheme $\mathcal T$ in which we optimise over the parameter $p$ indeed provides an improved reconstruction result compared with the $TV_2$ scheme $\mathcal B$.

\begin{figure}[!h]
  \centering
        \includegraphics[width=1.0\linewidth]{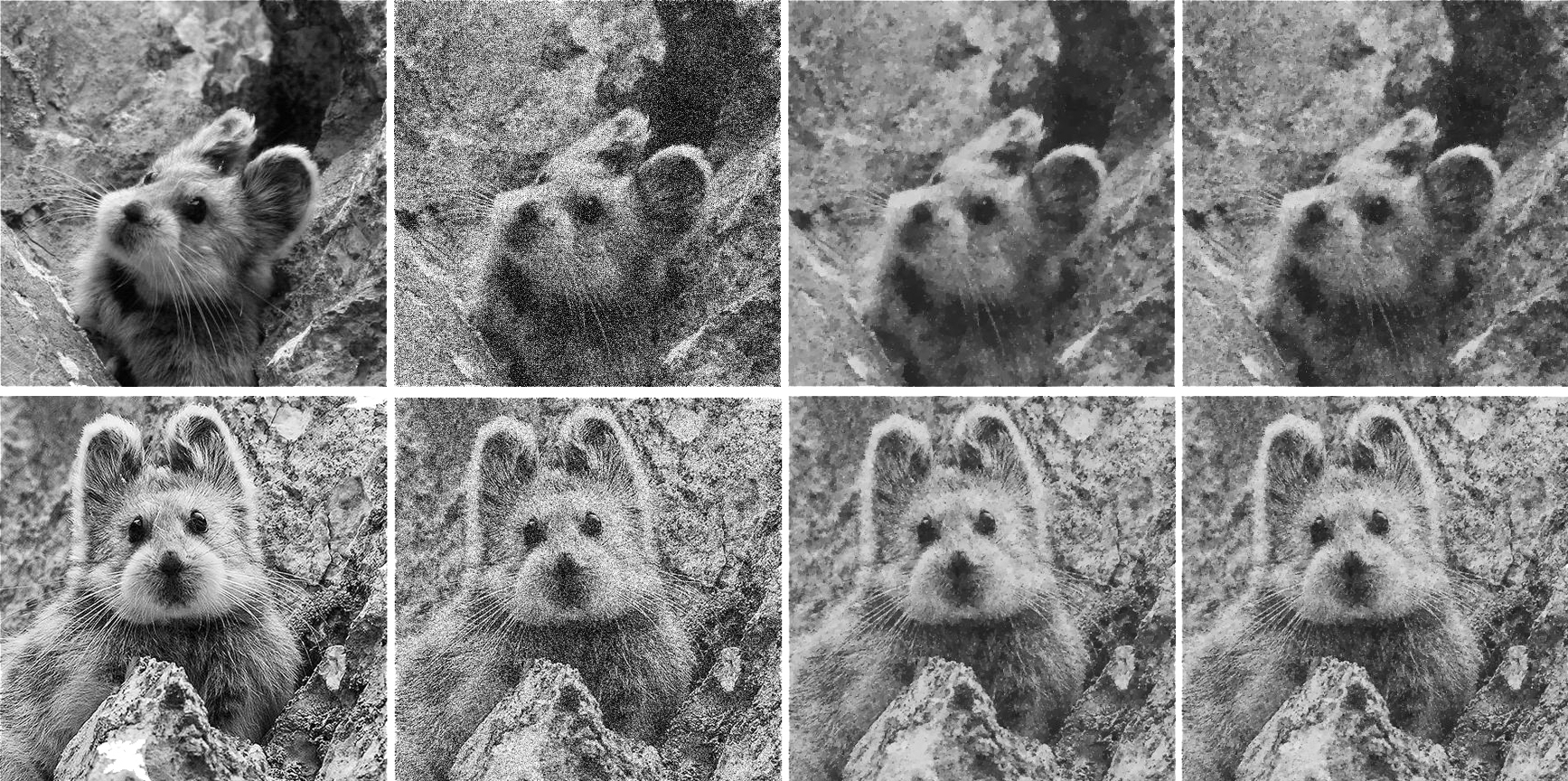}
\caption{Left column to right column: test images; noisy version (with artificial Gaussian noisy); optimal reconstructed image $u_{\alpha_\T,2}$ provided by Scheme $\mathcal B$ \eqref{scheme_B1_BV}-\eqref{scheme_B2_BV} and optimal reconstructed image $u_{\alpha_\T, p_\T}$ provided by Scheme $\mathcal T$ \eqref{scheme_B1_BV_p_main}-\eqref{scheme_B2_BV_p_main}, respectively. We report that the optimal $p$ are achieved at $p_\T=1.0197$ and $p_\T=1.1359$ for test set in row one and two, respectively. 
}
\label{fig:clean_noise2}
\end{figure}

\subsection{Conclusions and future works}
In this work, we first constructed $\ell^p$-(an)-isotropic total variation $TV_{p}$ semi-norms and applied it into the imaging processing problems. This class of semi-norms can be viewed as a generalization of the standard total variation. Then, we introduce a semi-supervised learning scheme to optimize the underlying Euclidean parameter $p$ in $TV_{p}$. A further finite approximation method of such learning scheme allows us not only conclude the existence of global optimization but also allows us to numerically compute it, especially in the situation that the convex condition is missing.\\\\
We also want to remark a few words about the inefficiency of the finite approximation scheme studied above, and provide several ways to mitigate such inefficiency. The finite approximation scheme searches the global optimizer by walking through every grid point. Although the number of grid points is finite, the massive amount of them would inevitably cause long CPU time. One way to mitigate such problem is to implement a parallel computation method as the construction and searching procedure used in our finite approximation is in particular suitable for such acceleration method.\\\\
On the other hand, we observe from Table \ref{table_test_result} that the numerical error (column 3) (which is what we actually obtained) is much smaller than the given acceptable error (column 1) (which is what we expect to obtain), which likely causes over-computing and hence wait of CPU time. This phenomenon is partially due to the large value of $TV(u_\eta)$ and $\alpha_U$. To mitigate this drawback, we observe  from the numerical simulation, reported in Table \ref{table_compare_tv}, that the optimal solution usually has the property that 
\be\label{almost_re_clean}
TV_{p_\T}(u_{\alpha_\T, p_\T}) \approx TV_{ p_\T}(u_c).
\ee

\begin{table}[h]
\begin{tabular}{|l|l|l|l|l|}
\hline
Test image & optimal $p_\T$ & $TV_{p_\T}(u_\eta)$ & $TV_{p_\T}(u_c)$ & $TV_{p_\T}(u_{\alpha_\T,p_\T})$ \\ \hline
Figure \ref{fig:clean_noise} & 2.6622          & 29.7651        & 14.6654      & 11.0291        \\\hline
Figure \ref{fig:clean_noise2}, row 1 & 1.0197          & 142.0373        & 98.4627      & 108.0941         \\\hline
Figure \ref{fig:clean_noise2}, row 2 & 1.1359          & 102.3078        & 48.7725      & 33.9762 \\ \hline      
\end{tabular}
\caption{Total variation of clean image, corrupted image, and optimal reconstructed image.}
\label{table_compare_tv}
\end{table}

Thus, if we take \eqref{almost_re_clean} for guaranteed, we could reduce the range of regularization parameters $\alpha$ to be $[\alpha_L,\alpha_R]\subset [0,\alpha_U]$ such that 
\be
TV_{p}(u_{\alpha_L,p})\geq TV_{p}(u_c)+\frac12(TV_{p}(u_\eta)-TV_{p}(u_c))>\frac12 TV_p(u_c)\geq TV_{p}(u_{\alpha_R,p}).
\ee
Then, we can reduce estimate \eqref{intro_estimation_last} in Theorem \ref{article_result} to 
\be
\abs{\CC(\alpha_\Tl, p_\Tl)-\CC(\alpha_\T,p_\T)}\leq  \sqrt{\alpha_R-\alpha_L}\fmp{\abs{1/l}^{1/2}+2\fsp{\fsp{1-N^{-1/\sqrt l}}}^{1/2}}(TV_1(u_c))^{1/2}.
\ee
We notice that this new estimate uses only the total variation of the clean image $u_c$, which is assumed to be much smaller than the variation of the corrupted image and the value of $\alpha_R-\alpha_L$ is also much smaller than $\alpha_U$. However, due to the length of this article and the fact that the estimate \eqref{almost_re_clean} already fully satisfies our purpose, we decide not to pursue on how to prove \eqref{almost_re_clean} but leave it for future work. \\\\
Another interesting direction is to understand which properties of a given image influence the value of optimal tuning parameter $\alpha$ and underlying Euclidean norm $p$ the most. The tuning parameter, by its definition, decides the regularization strength, and hence, higher noise usually requires a larger $\alpha$ value (as an extreme example, for an image with zero noise the optimal $\alpha$ is $0$).  However, what properties of a given image decide the optimal value for the underlying Euclidean norm $p$ is unclear so far. As we can see from Table \ref{table_compare_tv} for the 3 test images (with the exact same level of Gaussian noise) the optimal value $p$ ranges from almost 1 to almost 3. The current guess is that the optimal $p$ is partially decided by the properties of edges of the given image but a detailed theoretical explanation is still missing. \\\\
As a final remark of the training scheme $\mathcal T$ with $\ell^p$-(an)-isotropic total variation, the introduction of Euclidean order $p\in[1,+\infty]$ into training scheme only meant to expand the training choices, but not to provide a superior seminorm to the popular choice $TV_2$ or $TV_1$. The optimal order $\tilde p\in\flp{1,2}$ or not, is completely up to the given training image $u_\eta=u_c+\eta$.\\\\
\textbf{Acknowledgments.} PL acknowledges support from the EPSRC Centre Nr. EP/N014588/1 and the Leverhulme Trust project on Breaking the non-convexity barrier. CBS acknowledges support from the Leverhulme Trust project on Breaking the non-convexity barrier, the Philip Leverhulme Prize, the EPSRC grant Nr. EP/M00483X/1, the EPSRC Centre Nr. EP/N014588/1, the RISE projects CHiPS and NoMADS, the Cantab Capital Institute for the Mathematics of Information and the Alan Turing Institute. We gratefully acknowledge the support of NVIDIA Corporation with the donation of a Quadro P6000 GPU used for this research.
\bibliographystyle{abbrv}
\bibliography{PTV_T_VReady}{}
\end{document}